\newcommand{\beq}{\begin{equation}}
\newcommand{\eeq}{\end{equation}}
\newcommand{\beqa}{\begin{eqnarray}}
\newcommand{\eeqa}{\end{eqnarray}}
\newcommand{\beqas}{\begin{eqnarray*}}
\newcommand{\eeqas}{\end{eqnarray*}}
\newcommand{\bi}{\begin{itemize}}
\newcommand{\ei}{\end{itemize}}
\newcommand{\ba}{\begin{array}}
\newcommand{\ea}{\end{array}}
\newcommand{\nn}{\nonumber}
\newcommand{\bbr}{\mathbb{R}}
\newcommand{\bbe}{\mathbb{E}}
\def\eqnok#1{(\ref{#1})}
\def\vgap{\vspace*{.1in}}
\def\Prob{{\rm Prob}}
\newcommand{\C}[1]{{\cal {#1}}}
\title{
Mini-batch Stochastic Approximation Methods for\\
Nonconvex Stochastic Composite Optimization
\thanks{August, 2013.
This research was partially supported by NSF
grants CMMI-1000347, CMMI-1254446, DMS-1319050, DMS-1016204 and ONR grant N00014-13-1-0036.
}}
\author{Saeed Ghadimi\thanks{sghadimi@ufl.edu,
Department of Industrial and Systems Engineering,
University of Florida, Gainesville, FL 32611.}
\and Guanghui Lan\thanks{glan@ise.ufl.edu,
http://www.ise.ufl.edu/glan,
Department of Industrial and Systems Engineering,
University of Florida, Gainesville, FL 32611.}
\and Hongchao Zhang\thanks{hozhang@math.lsu.edu ,
https://www.math.lsu.edu/$\sim$hozhang,
Department of Mathematics, Louisiana State University,
Baton Rouge, LA 70803.}
}
\begin{document}
\maketitle
% ABSTRACT-------------------------
\begin{abstract}
%In this paper, we propose a randomized stochastic projected gradient (RSPG) algorithm for constrained nonlinear
%(possibly nonconvex) stochastic programming. Our algorithm is an extension of the unconstrained stochastic
%algorithm developed by Ghadimi and Lan~\cite{GhaLan12} to handle convex constraints. Proper mini-batch of
%samples are taken at each iteration depending on the total budget of stochastic samples allowed.
%Our algorithm is based on a general distance function to allow taking
%advantage of the geometry of the feasible region. Complexity of this algorithm is established in a unified setting,
%which shows nearly optimal complexity of the algorithm for convex stochastic programming.
%A post-optimization phase is also proposed to significantly reduce the variance of the solutions returned by the algorithm.
%In addition, based on the RSPG algorithm, a stochastic gradient free algorithm, which only uses the stochastic
% zeroth-order information, has been  also discussed. Some preliminary numerical results are also provided.

This paper considers a class of constrained stochastic composite optimization problems
whose objective function is given by the summation of a differentiable (possibly nonconvex)
component, together with a certain non-differentiable (but convex) component.
In order to solve these problems, we propose a randomized stochastic projected gradient (RSPG) algorithm, in which
proper mini-batch of samples are taken at each iteration depending on the total budget of stochastic samples allowed.
The RSPG algorithm also employs a general distance function to allow taking
advantage of the geometry of the feasible region. Complexity of this algorithm is established in a unified setting,
which shows nearly optimal complexity of the algorithm for convex stochastic programming.
A post-optimization phase is also proposed to significantly reduce the variance of the solutions returned by the algorithm.
In addition, based on the RSPG algorithm, a stochastic gradient free algorithm, which only uses the stochastic
zeroth-order information, has been also discussed. Some preliminary numerical results are also provided.

\end{abstract}
%KEYWORDS--------------------------

\vgap

\noindent {\bf keywords} constrained stochastic programming, mini-batch of samples, stochastic approximation, nonconvex optimization,
stochastic programming, first-order method, zeroth-order method

\thispagestyle{plain}
\markboth{S. GHADIMI, G. LAN, AND H. ZHANG}{NONCONVEX STOCHASTIC COMPOSITE OPTIMIZATION}

%%%%%%%%%%%%%%%%%%%%%%%%%%%%%%%%%%%%%%%%%%%%
\section{Introduction}
\label{sec_intro}
%%%%%%%%%%%%%%%%%%%%%%%%%%%%%%%%%%%%%%%%%%%%
In this paper, we consider the following problem
\beq \label{NLP}
\Psi^* := \min\limits_{x \in X} \{\Psi(x) := f(x) + h(x)\},
\eeq
where $X$ is a closed convex set in Euclidean space $\bbr^n$, $f:X \to \bbr$ is continuously differentiable,
but possibly nonconvex, and $h$ is a simple convex function with known structure, but possibly nonsmooth
(e.g. $h(x) = \|x\|_1$ or $h(x) \equiv 0$).
We also assume that the gradient of $f$ is $L$-Lipschitz continuous for some $L >0$, i.e.,
\beq
\|\nabla f(y) - \nabla f(x)\| \le L\|y-x\|, \quad \mbox{for any } x, y \in X,
\eeq
and $\Psi$ is bounded below over $X$, i.e. $\Psi^*$ is finite.
Although $f$ is Lipschitz continuously differentiable, we assume that only the noisy gradient of $f$ is available
via subsequent calls to a {\sl stochastic first-order oracle} (${\cal SFO}$). Specifically, at the $k$-th call, $k \ge 1$,
for the input $x_k \in X$, ${\cal SFO}$ would output a {\sl stochastic gradient} $G(x_k, \xi_k)$, where
$\xi_k$  is a random variable whose distribution is supported on $\Xi_k \subseteq \bbr^d$.
Throughout the paper, we make the following assumptions for the Borel functions $G(x_k, \xi_k)$.

{\bf A1:} For any $k \ge 1$, we have
\beqa
&\mbox{a)}& \, \, \bbe [G(x_k, \xi_k)] = \nabla f(x_k), \label{ass1.a} \\
&\mbox{b)} & \, \, \bbe \left[ \|G(x_k, \xi_k) - \nabla f(x_k)\|^2 \right] \le \sigma^2, \label{ass1.b}
\eeqa
where $\sigma >0$ is a constant.
For some examples which fit our setting, one may refer the problems in references
\cite{Andr98-1,Fu06a,Fu02-1,GhaLan12,Glasserman91,LE90-1,Mairal09,MasBaxBarFre99,RubSha93}.

Stochastic programming (SP) problems have been the subject of intense studies for more than $50$ years.
In the seminal 1951 paper, Robbins and Monro \cite{RobMon51-1} proposed a classical stochastic approximation
(SA) algorithm for solving SP problems. Although their method has ``asymptotically optimal" rate of convergence
for solving a class of strongly convex SP problems, the practical performance of their method is often
poor (e.g., \cite[Section 4.5.3]{Spall03}). Later, Polyak \cite{pol90} and Polyak and Juditsky \cite{pol92}
proposed important improvements to the classical SA algorithms, where larger stepsizes were allowed in their methods.
Recently, there have been some important
developments of SA algorithms for solving convex SP problems (i.e., $\Psi$ in \eqnok{NLP} is a convex function).
Motivated by the complexity theory in convex optimization \cite{nemyud:83},
these studies focus on the convergence properties of SA-type algorithms in a finite number of iterations.
For example, Nemirovski et al. \cite{NJLS09-1} presented a mirror descent SA approach for solving general
nonsmooth convex stochastic programming problems. They showed that the mirror descent SA exhibits an
optimal ${\cal O} ( 1 /\epsilon^2)$ iteration complexity for solving these problems with an essentially
unimprovable constant factor.
Also, Lan \cite{Lan10-3} presented a unified optimal method for smooth, nonsmooth and stochastic optimization.
This unified optimal method also leads to optimal methods for strongly convex problems \cite{GhaLan12-2a,GhaLan13-1}.
However, all of the above mentioned methods need the convexity of the problem to establish their convergence and
cannot deal with the situations where the objective function is not necessarily convex.

When problem \eqnok{NLP} is nonconvex, the research on SP algorithms so far is very limited and still far from
mature. For the deterministic case, i.e., $\sigma = 0$ in \eqnok{ass1.b}, the complexity of the gradient descent method
for solving problem \eqnok{NLP} has been studied in \cite{CarGouToi10-1,Nest04}.
Very recently, Ghadimi and Lan~\cite{GhaLan12} proposed an SA-type algorithm coupled with a randomization scheme, namely,
a randomized stochastic gradient (RSG) method, for solving the unconstrained nonconvex SP problem, i.e., problem
\eqnok{NLP} with $h \equiv 0$ and $X = \bbr^n$.
In their algorithm, a trajectory $\{x_1, \ldots, x_N\}$ is generated by a stochastic gradient descent method,
and a solution $\bar x$ is randomly selected from this trajectory according to a certain probability distribution.
They showed that the number of calls to the ${\cal SFO}$ required by this algorithm to find an $\epsilon$-solution,
i.e., a point $\bar x$ such that $\bbe[\|\nabla f(\bar x)\|_2^2] \le \epsilon$, is bounded by ${\cal O} ( \sigma^2 /\epsilon^2)$.
They also presented a variant of the RSG algorithm, namely, a two-phase randomized stochastic gradient ($2$-RSG) algorithm
to improve the large-deviation results of the RSG algorithm. Specifically, they showed that the complexity of the
$2$-RSG algorithm for computing an {\sl $(\epsilon, \Lambda)$-solution}, i.e.,  a point $\bar x$ satisfying
$\Prob \{\|\nabla f(\bar x)\|_2^2 \le \epsilon\} \ge 1-\Lambda$,
for some $\epsilon > 0$ and $\Lambda \in (0,1)$, can be bounded by
\[
{\cal O} \left\{
 \frac{\log(1/\Lambda) \sigma^2}{\epsilon}\left[\frac{1}{\epsilon}
 +  \frac{\log(1/\Lambda)}{\Lambda}\right]
\right\}.
\]
They also specialized the RSG algorithm and presented a randomized stochastic gradient free (RSGF) algorithm for
the situations where only noisy function values are available. It is shown that the expected complexity of this
RSGF algorithm is ${\cal O} ( n \sigma^2 /\epsilon^2)$.

While the RSG algorithm and its variants can handle the unconstrained nonconvex SP problems, their convergence cannot
be guaranteed for stochastic composite optimization problems in \eqnok{NLP}
where $X \neq \bbr^n$ and/or $h(\cdot)$ is non-differentiable.
Our contributions in this paper mainly consist of developing variants
of the RSG algorithm by taking a mini-batch of samples at each iteration of our algorithm to deal with the constrained
composite problems while preserving the complexity results. More specifically, we first modify the scheme of the RSG algorithm
to propose a randomized stochastic projected gradient (RSPG) algorithm to solve constrained nonconvex stochastic composite problems.
Unlike the RSG algorithm, at each iteration of the RSPG algorithm, we take multiple samples such that the total number
of calls to the ${\cal SFO}$ to find a solution $\bar x \in X$ such that $\bbe[\|g_{_X}(\bar x)\|^2] \le \epsilon$,
is still ${\cal O} ( \sigma^2 /\epsilon^2)$, where $g_{_X}(\bar x)$
is a generalized projected gradient of $\Psi$ at $\bar x$ over X. In addition, our RSPG algorithm is in a more general
setting depending on a general distance function rather than Euclidean distance \cite{GhaLan12}.
This would be particularly useful for special structured constrained set (e.g., $X$ being a standard simplex).
Secondly, we present a two-phase randomized stochastic projected gradient ($2$-RSPG) algorithm, the RSPG algorithm with
a post-optimization phase, to improve the large-deviation results of the RSPG algorithm. And we show that
the complexity of this approach can be further improved under a light-tail assumption about the ${\cal SFO}$.
Thirdly, under the assumption that the gradient of $f$ is also bounded on $X$, we specialize the RSPG algorithm to give
a randomized stochastic projected gradient free (RSPGF) algorithm, which only uses the stochastic zeroth-order information.
Finally, we present some numerical results to show the effectiveness of the aforementioned randomized stochastic
projected gradient algorithms, including the RSPG, $2$-RSPG and RSPGF algorithms. Some practical improvements of these
algorithms have been also discussed.

The remaining part of this paper is organized as follows. We first describe some properties of the projection based on
a general distance function in Section 2. In section 3, a deterministic first-order method for problem \eqnok{NLP}
is proposed, which mainly provides a basis for our stochastic algorithms developed in later sections.
Then, by incorporating a randomized scheme, we present the RSPG and $2$-RSPG algorithms for solving the SP problem
\eqnok{NLP} in Section 4. In section 5, we discuss how to generalize the RSPG algorithm to the case
when only zeroth-order information is available. Some numerical results and discussions from implementing
our algorithms are presented in Section 6.  Finally, in Section 7, we give some concluding remarks.

{\bf Notation.}
We use $\|\cdot\|$ to denote a general norm
with associated inner product $\langle \cdot, \cdot \rangle$.
For any $p \ge 1$, $\|\cdot\|_p$ denote the standard $p$-norm in  $\bbr^n$, i.e.
\[
\|x\|_p^p =  \sum_{i=1}^n |x_i|^p, \qquad \mbox{for any } x \in \bbr^n.
\]
For any convex function $h$, $\partial h(x)$ is the subdifferential set at $x$.
Given any $\Omega \subseteq \bbr^n$, we say $f \in {\cal C}_L^{1,1}(\Omega)$, if $f$
is Lipschitz continuously differentiable with Lipschitz constant $L>0$, i.e.,
\beq
\|\nabla f(y) - \nabla f(x)\| \le L\|y-x\|, \qquad \mbox{for any } x, y \in \Omega,
\eeq
which clearly implies
\beq \label{smooth}
|f(y) - f(x) - \langle \nabla f(x), y - x \rangle | \le \frac{L}{2} \|y - x\|^2,
\qquad \mbox{for any } x, y \in \Omega.
\eeq
For any real number $r$, $\lceil r \rceil$ and $\lfloor r \rfloor$ denote the nearest integer to
$r$ from above and below, respectively. $\bbr_+$ denotes the set of nonnegative real numbers.
%
%
%%%%%%%%%%%%%%%%%%%%%%%%%%%%%%%%%%%%%%%%%%%%
\section{Some properties of generalized projection}
\label{sec_prox}
%%%%%%%%%%%%%%%%%%%%%%%%%%%%%%%%%%%%%%%%%%%%
In this section, we review the concept of projection in a general sense as well as its important properties.
This section consists of two subsections.
We first discuss the concept of prox-function and its associated projection in Subsection~\ref{sec_prelim}.
Then, in Subsection~\ref{sec_comp_proj}, we present some important properties of the projection, which will
play a critical role for the proofs in our later sections.

\subsection{Prox-function and projection} \label{sec_prelim}
It is well-known that using a generalized distance generating function, instead of the usual Euclidean distance function,
would lead to algorithms that can be adjusted to the geometry of the feasible set
and/or efficient solutions of the projection \cite{AuTe06-1,BBC03-1,Breg67,Lan10-3,NJLS09-1,Teb97-1}.
Hence, in this paper we would like to set up the projection based on the so-called prox-function.

A function $\omega:\,X\to \bbr$ is said to be a {\em distance generating function} with modulus $\alpha>0$
with respect to $\|\cdot\|$, if $\omega$ is continuously differentiable and strongly convex satisfying
\beq\label{strg_cnvx}
 \langle x-z, \nabla \omega(x)-\nabla\omega(z) \rangle \ge \alpha
\|x-z\|^2,\;\;\forall x,z\in X.
\eeq
Then, the {\em prox-function} associated with $\omega$ is defined as
\beq\label{prox_fun}
V(x,z)=\omega(x)-[\omega(z)+\langle \nabla \omega(z), x-z \rangle].
\eeq
In this paper, we assume that the prox-function $V$ is chosen such that the generalized projection problem given by
\beq \label{comp_proj}
x^+ = \arg\min\limits_{u \in X} \left\{ \langle g, u \rangle
+ \frac{1}{\gamma} V(u, x)+h(u) \right\}
\eeq
is easily solvable for any $\gamma>0$, $g \in \bbr^n$ and $x \in X$.
Apparently, different choices of $\omega$ can be used in the definition of prox-function.
One simple example would be $\omega(x) = \|x\|_2^2/2$,  which gives  $V(x,z) = \|x -z\|_2^2 / 2$.
And in this case, $x^+$ is just the usual Euclidean projection.
Some less trivial examples can be found, e.g., in \cite{AuTe06-1,BenMarNem01,DangLan12-1,JudNem11,nemyud:83}.
%\begin{example} Let $X = \{x \in \bbr^n: \|x\|_1 \le 1\}$. If $ \omega(x) = \frac{1}{2}\|x\|_p^2$
%with $p = 1 + 1 / \ln n$, then $\omega$ is strongly convex with modulus $\alpha = 1/{(e^2 \ln n)}$
%with respect to $\| \cdot \|_1$.
%\end{example}

\subsection{Properties of Projection} \label{sec_comp_proj}
In this subsection, we discuss some important properties of the generalized projection defined in \eqnok{comp_proj}.
Let us first define
\beq \label{proj_g}
P_X(x, g, \gamma) = \frac{1}{\gamma}(x-x^+),
\eeq
where $x^+$ is given in \eqnok{comp_proj}. We can see that $P_X(x, \nabla f(x), \gamma)$ can be viewed as a
generalized projected gradient of $\Psi$ at $x$. Indeed, if $X = \bbr^n$ and $h$ vanishes, we would
have $P_X(x, \nabla f(x), \gamma) = \nabla f(x) = \nabla \Psi(x)$.

The following lemma provides a bound for the size of $P_X(x, g, \gamma)$.
\begin{lemma} \label{proj_g_size}
Let $x^+$ be given in \eqnok{comp_proj}. Then, for any $x \in X$, $g \in \bbr^n$ and $\gamma>0$, we have
\beq \label{comp_proj_ineq}
\langle g , P_X(x, g, \gamma) \rangle \ge \alpha \|P_X(x, g, \gamma)\|^2+\frac{1}{\gamma} \left[h(x^+) -h(x) \right].
\eeq
\end{lemma}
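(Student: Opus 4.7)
The plan is to prove the inequality directly from the first-order optimality conditions of the generalized projection subproblem, combined with the strong convexity of the distance generating function $\omega$ and the convexity of $h$.

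First, since the objective in \eqnok{comp_proj} is strongly convex and $x^+$ is its minimizer over $X$, the optimality condition guarantees the existence of a subgradient $p \in \partial h(x^+)$ such that, for every $u \in X$,
\[
\Bigl\langle g + \tfrac{1}{\gamma}\bigl(\nabla \omega(x^+) - \nabla \omega(x)\bigr) + p,\; u - x^+\Bigr\rangle \ge 0.
\]
I would then specialize this to $u = x \in X$ and rearrange to isolate $\langle g, x - x^+\rangle$ on one side, yielding
\[
\langle g, x - x^+\rangle \;\ge\; \tfrac{1}{\gamma}\bigl\langle \nabla \omega(x^+) - \nabla \omega(x),\, x^+ - x\bigr\rangle + \langle p,\, x^+ - x\rangle.
\]

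Next I would bound the two terms on the right individually. Strong convexity of $\omega$ with modulus $\alpha$ (equation \eqnok{strg_cnvx}) directly gives
\[
\bigl\langle \nabla \omega(x^+) - \nabla \omega(x),\, x^+ - x\bigr\rangle \;\ge\; \alpha \|x^+ - x\|^2,
\]
while the subgradient inequality for the convex function $h$, applied at $x^+$ with subgradient $p$, gives
\[
\langle p,\, x^+ - x\rangle \;\ge\; h(x^+) - h(x).
\]
Substituting these into the previous display produces
\[
\langle g, x - x^+\rangle \;\ge\; \tfrac{\alpha}{\gamma}\|x^+ - x\|^2 + h(x^+) - h(x).
\]

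Finally, I would divide both sides by $\gamma$ and recognize $P_X(x,g,\gamma) = (x - x^+)/\gamma$ from \eqnok{proj_g} to obtain exactly \eqnok{comp_proj_ineq}. The only subtle point — and the one I would flag as the main thing to get right — is the bookkeeping around the optimality condition: one must state it in subdifferential form (because $h$ is nonsmooth) and make sure the sign conventions line up so that the strong-convexity term ends up with a plus sign and the subgradient inequality points in the direction that yields $h(x^+) - h(x)$. Once those directions are correctly set, the argument is essentially a few lines of algebra.
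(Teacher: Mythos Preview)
Your proposal is correct and matches the paper's proof essentially line for line: the paper also writes the subdifferential optimality condition for \eqnok{comp_proj}, sets $u=x$, applies \eqnok{strg_cnvx} and the convexity of $h$ to obtain $\langle g, x-x^+\rangle \ge \tfrac{\alpha}{\gamma}\|x^+-x\|^2 + h(x^+)-h(x)$, and then invokes \eqnok{proj_g} with $\gamma>0$ to conclude.
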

\begin{proof}
By the optimality condition of \eqnok{comp_proj} and the definition of prox-function in \eqnok{prox_fun},
there exists a $p \in \partial h(x^+)$ such that
\[
\langle g + \frac{1}{\gamma} \left[\nabla \omega(x^+) - \nabla \omega(x) \right]+p, u-x^+ \rangle \ge 0, \qquad
 \mbox{for any } u \in X.
\]
Letting $u=x$ in the above inequality, by the convexity of $h$ and \eqnok{strg_cnvx}, we obtain
\beqa
\langle g , x-x^+ \rangle &\ge& \frac{1}{\gamma} \langle \nabla \omega(x^+) - \nabla \omega(x) , x^+ - x\rangle +
\langle p ,x^+ - x \rangle \nonumber \\
&\ge& \frac{\alpha}{\gamma} \|x^+-x\|^2+  \left[h(x^+)-h(x) \right], \nonumber
\eeqa
which in the view of \eqnok{proj_g} and $\gamma>0$ clearly imply \eqnok{comp_proj_ineq}.
\end{proof}

\vgap

It is well-known \cite{RocWet98} that the Euclidean projection is Lipschitz continuous.
Below, we show that this property also holds for the general projection.
\begin{lemma}
Let $x_1^+$ and $x_2^+$ be given in \eqnok{comp_proj} with $g$ replaced by $g_1$ and $g_2$ respectively. Then,
\beq \label{lip_comp_proj}
\|x_2^+ - x_1^+\| \le \frac{\gamma}{\alpha} \|g_2 -g_1\|,
\eeq
where $\alpha >0$ is the modulus of strong convexity of $\omega$ defined in \eqnok{strg_cnvx}.
\end{lemma}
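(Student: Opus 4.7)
The plan is to mimic the single-point optimality argument used in Lemma~\ref{proj_g_size}, but apply it to both projections simultaneously and subtract, so that the anchor term $\nabla\omega(x)/\gamma$ cancels and only the $g_1-g_2$ difference and a strong-convexity term remain.

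First, I would write down the first-order optimality conditions for the two projection problems defining $x_1^+$ and $x_2^+$. By exactly the same reasoning as in the proof of Lemma~\ref{proj_g_size}, there exist $p_1\in\partial h(x_1^+)$ and $p_2\in\partial h(x_2^+)$ such that, for every $u\in X$,
\[
\Bigl\langle g_1+\tfrac{1}{\gamma}\bigl(\nabla\omega(x_1^+)-\nabla\omega(x)\bigr)+p_1,\;u-x_1^+\Bigr\rangle\ge 0,
\]
\[
\Bigl\langle g_2+\tfrac{1}{\gamma}\bigl(\nabla\omega(x_2^+)-\nabla\omega(x)\bigr)+p_2,\;u-x_2^+\Bigr\rangle\ge 0.
\]
Then I would plug $u=x_2^+$ into the first and $u=x_1^+$ into the second, and add the two inequalities. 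The $\nabla\omega(x)$ contributions cancel because $x_2^+-x_1^+$ and $x_1^+-x_2^+$ appear with opposite signs, leaving
\[
\langle g_1-g_2,\;x_2^+-x_1^+\rangle \ge \tfrac{1}{\gamma}\langle \nabla\omega(x_1^+)-\nabla\omega(x_2^+),\;x_1^+-x_2^+\rangle + \langle p_1-p_2,\;x_1^+-x_2^+\rangle.
\]

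Next I would dispose of the two terms on the right. The strong-convexity assumption \eqnok{strg_cnvx} bounds the first by $\tfrac{\alpha}{\gamma}\|x_1^+-x_2^+\|^2$. The subgradient term is nonnegative by the standard monotonicity of $\partial h$: summing the subgradient inequalities $h(x_2^+)\ge h(x_1^+)+\langle p_1,x_2^+-x_1^+\rangle$ and $h(x_1^+)\ge h(x_2^+)+\langle p_2,x_1^+-x_2^+\rangle$ gives $\langle p_1-p_2,x_1^+-x_2^+\rangle\ge 0$. Combining these two facts yields
\[
\langle g_1-g_2,\;x_2^+-x_1^+\rangle\;\ge\;\tfrac{\alpha}{\gamma}\|x_1^+-x_2^+\|^2.
\]

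Finally, applying the Cauchy--Schwarz-type bound $\langle g_1-g_2,x_2^+-x_1^+\rangle\le \|g_1-g_2\|\,\|x_2^+-x_1^+\|$ on the left-hand side and dividing by $\|x_1^+-x_2^+\|$ (the case $x_1^+=x_2^+$ is trivial) gives exactly \eqnok{lip_comp_proj}. I do not anticipate a real obstacle here; the only mild subtlety is making sure to combine the two optimality conditions so that the $\nabla\omega(x)$ terms cancel and to correctly exploit convexity of $h$ (rather than, say, smoothness) to kill the subgradient term with a sign argument.
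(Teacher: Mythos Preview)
Your proposal is correct and follows essentially the same route as the paper: write the two first-order optimality conditions, test each at the other minimizer, add, and use strong convexity of $\omega$ together with convexity of $h$ to get $\langle g_1-g_2, x_2^+-x_1^+\rangle \ge \tfrac{\alpha}{\gamma}\|x_2^+-x_1^+\|^2$, then finish with Cauchy--Schwarz. The only cosmetic difference is that the paper converts each subgradient term separately into an $h$-difference (so the two $h$-terms cancel upon summation), whereas you invoke monotonicity of $\partial h$ directly; these are the same argument.
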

\begin{proof}
By the optimality condition of \eqnok{comp_proj}, for any $u \in X$, there exist $p_1 \in \partial h(x_1^+)$ and
$p_2 \in \partial h(x_2^+)$  such that
\beqa
\langle g_1 + \frac{1}{\gamma} \left[\nabla \omega(x_1^+) - \nabla \omega(x) \right]+p_1 , u-x_1^+ \rangle \ge 0,
\label{Lip_comp_proj1}
\eeqa
and
\beqa
\langle g_2 + \frac{1}{\gamma} \left[\nabla \omega(x_2^+) - \nabla \omega(x) \right]+p_2 , u-x_2^+
\rangle \ge 0. \label{Lip_comp_proj2}
\eeqa
Letting $u=x_2^+$ in \eqnok{Lip_comp_proj1}, by the convexity of $h$, we have
\beqa \label{Lip_comp_proj3}
\langle g_1, x_2^+-x_1^+ \rangle &\ge& \frac{1}{\gamma} \langle \nabla \omega(x) - \nabla \omega(x_1^+) , x_2^+-x_1^+
\rangle +\langle p_1, x_1^+-x_2^+ \rangle  \nn \\
&\ge& \frac{1}{\gamma} \langle \nabla \omega(x_2^+) - \nabla \omega (x_1^+) , x_2^+-x_1^+ \rangle + \frac{1}{\gamma}
\langle \nabla \omega(x) - \nabla \omega(x_2^+) , x_2^+-x_1^+ \rangle \nn \\
& & + h(x_1^+) - h(x_2^+).
\eeqa
Similarly, letting $u=x_1^+$ in \eqnok{Lip_comp_proj2}, we have
\beqa \label{Lip_comp_proj4}
\langle g_2, x_1^+-x_2^+ \rangle &\ge&
\frac{1}{\gamma} \langle \nabla \omega(x) - \nabla \omega (x_2^+) , x_1^+-x_2^+ \rangle+\langle p_2, x_2^+-x_1^+ \rangle \nn \\
&\ge& \frac{1}{\gamma} \langle \nabla \omega(x) - \nabla \omega (x_2^+) , x_1^+-x_2^+ \rangle+h(x_2^+)-h(x_1^+).
\eeqa
Summing up \eqnok{Lip_comp_proj3} and \eqnok{Lip_comp_proj4}, by the strong convexity \eqnok{strg_cnvx} of $\omega$,
we obtain
\[
\|g_1 - g_2\| \|x_2^+-x_1^+\| \ge \langle g_1 - g_2, x_2^+-x_1^+ \rangle \ge \frac{\alpha}{\gamma} \|x_2^+-x_1^+\|^2, \]
which gives \eqnok{lip_comp_proj}.
\end{proof}

\vgap

As a consequence of the above lemma, we have $P_X(x,\cdot,\gamma)$ is Lipschitz continuous.
\begin{proposition}\label{lip_proj_grad}
Let $P_X(x,g,\gamma)$ be defined in \eqnok{proj_g}. Then, for any $g_1$ and $g_2$ in $\bbr^n$, we have
\beq
\|P_X(x, g_1, \gamma) - P_X(x,g_2, \gamma)\| \le \frac {1}{\alpha} \|g_1-g_2\|,
\eeq
where $\alpha$ is the modulus of strong convexity of $\omega$ defined in \eqnok{strg_cnvx}.
\end{proposition}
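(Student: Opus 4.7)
The proposition follows almost immediately from the preceding lemma on the Lipschitz continuity of the generalized projection map $g \mapsto x^+$. My plan is simply to rewrite the difference $P_X(x, g_1, \gamma) - P_X(x, g_2, \gamma)$ in terms of the corresponding projections $x_1^+$ and $x_2^+$ (with the same $x$ and $\gamma$, but with $g$ replaced by $g_1$ and $g_2$ respectively), and then invoke inequality \eqnok{lip_comp_proj}.

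Concretely, from the definition \eqnok{proj_g} we have
\[
P_X(x, g_1, \gamma) - P_X(x, g_2, \gamma) = \frac{1}{\gamma}(x - x_1^+) - \frac{1}{\gamma}(x - x_2^+) = \frac{1}{\gamma}(x_2^+ - x_1^+).
\]
Taking norms and applying \eqnok{lip_comp_proj} from the previous lemma yields
\[
\|P_X(x, g_1, \gamma) - P_X(x, g_2, \gamma)\| = \frac{1}{\gamma}\|x_2^+ - x_1^+\| \le \frac{1}{\gamma} \cdot \frac{\gamma}{\alpha}\|g_1 - g_2\| = \frac{1}{\alpha}\|g_1 - g_2\|,
\]
which is the desired bound.

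There is no real obstacle here: all the work was done in establishing \eqnok{lip_comp_proj}, which in turn rested on the optimality conditions for the two projection subproblems and the strong convexity of $\omega$. The proposition is essentially a restatement of that lemma after dividing by $\gamma$, so the proof is a one-line consequence and no additional estimates or case analysis are needed.
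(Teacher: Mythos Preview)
Your proof is correct and matches the paper's own argument essentially line for line: both expand $P_X(x,g_i,\gamma)$ via \eqnok{proj_g}, cancel the common $x/\gamma$ term, and apply \eqnok{lip_comp_proj}.
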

\begin{proof}
Noticing \eqnok{proj_g}, \eqnok{Lip_comp_proj1} and  \eqnok{Lip_comp_proj2}, we have
\[
\|P_X(x, g_1, \gamma) - P_X(x,g_2, \gamma)\| = \|\frac{1}{\gamma}(x-x_1^+)- \frac{1}{\gamma}(x- x_2^+)\| = \frac{1}{\gamma} \|x_2^+ - x_1^+\| \le \frac{1}{\alpha} \|g_1 -g_2\|,
\]
where the last inequality follows from \eqnok{lip_comp_proj}.
\end{proof}

\vgap

The following lemma (see, e.g., Lemma~1 of \cite{Lan10-3} and Lemma 2 of \cite{GhaLan12-2a})
characterizes the solution of the generalized projection.
\begin{lemma}\label{comp_proj_sol}
Let $x^+$ be given in \eqnok{comp_proj}. Then, for any $u \in X$, we have
\beq
\langle g, x^+ \rangle + h(x^+)+\frac{1}{\gamma} V(x^+, x)
\le \langle g, u \rangle + h(u)+\frac{1}{\gamma} [V(u, x) - V(u, x^+)].
\eeq
\end{lemma}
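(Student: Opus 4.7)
The plan is to derive this three-point inequality directly from the first-order optimality conditions of the projection subproblem, combined with the Bregman three-point identity. The proof parallels the standard argument for prox-mapping inequalities, and it has already been carried out implicitly in the proof of the preceding Lemma, so no new machinery is needed.

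First, I would invoke the optimality condition for the minimization problem \eqref{comp_proj} defining $x^+$: there exists $p\in\partial h(x^+)$ such that
\[
\left\langle g + \tfrac{1}{\gamma}\bigl[\nabla\omega(x^+) - \nabla\omega(x)\bigr] + p,\ u - x^+\right\rangle \ge 0 \qquad\text{for every } u\in X.
\]
Rearranging, this gives
\[
\langle g, x^+ - u\rangle \le \tfrac{1}{\gamma}\langle \nabla\omega(x^+) - \nabla\omega(x),\ u - x^+\rangle + \langle p, u - x^+\rangle.
\]
Next, the convexity of $h$ together with $p\in\partial h(x^+)$ yields $\langle p, u - x^+\rangle \le h(u) - h(x^+)$, which handles the nonsmooth term.

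The key step, and the only one requiring a short calculation, is the Bregman three-point identity
\[
\langle \nabla\omega(x^+) - \nabla\omega(x),\ u - x^+\rangle = V(u,x) - V(u, x^+) - V(x^+, x),
\]
which follows by expanding all three prox-function terms according to the definition \eqref{prox_fun} and observing that the $\omega(u)$ and $\omega(x^+)$ contributions cancel, leaving exactly the inner product on the left side. This is the essential "obstacle," but it is a routine algebraic verification rather than a conceptual hurdle.

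Finally, substituting this identity and the subgradient bound back into the inequality derived from optimality gives
\[
\langle g, x^+ - u\rangle \le \tfrac{1}{\gamma}\bigl[V(u,x) - V(u, x^+) - V(x^+, x)\bigr] + h(u) - h(x^+),
\]
and moving the $V(x^+,x)$ and $h(x^+)$ terms to the left-hand side produces the claimed inequality. No appeal to the strong convexity modulus $\alpha$ or to the Lipschitz result of Proposition~\ref{lip_proj_grad} is needed; the statement is an identity-level consequence of the optimality condition and the definition of the prox-function.
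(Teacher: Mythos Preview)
Your proof is correct and is precisely the standard argument for this three-point inequality. Note that the paper does not actually give its own proof here; it simply cites Lemma~1 of \cite{Lan10-3} and Lemma~2 of \cite{GhaLan12-2a}, where the same optimality-condition-plus-Bregman-identity derivation you wrote out is carried through.
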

%
%
%%%%%%%%%%%%%%%%%%%%%%%%%%%%%%%%%%%%%%%%%%%%
\section{Deterministic first-order methods}
\label{sec_first}
%%%%%%%%%%%%%%%%%%%%%%%%%%%%%%%%%%%%%%%%%%%%
%
In this section, we consider the problem \eqnok{NLP} with $f \in {\cal C}_L^{1,1}(X)$, and
for each input $x_k \in X$, we assume that the exact gradient $\nabla f(x_k)$ is
available. Using the exact gradient information, we give a deterministic projected gradient (PG) algorithm for
solving \eqnok{NLP}, which mainly provides a basis for us to develop the stochastic first-order algorithms
in the next section.
\vskip 0.1cm

\noindent {\bf A  projected gradient (PG) algorithm}
\begin{itemize}
\item [] {\bf Input:} Given initial point $x_1 \in X$,
total number of iterations $N$, and the stepsizes $\{\gamma_k\}$ with $\gamma_k >0$, $k \ge 1$.
\item [] {\bf Step } $k=1, \ldots, N$. Compute
\beq \label{update_CPG}
x_{k+1} = \arg \min_{u \in X} \left\{\langle \nabla f(x_k), u \rangle
+\frac{1}{\gamma_k} V(u, x_k)+h(u) \right\}.
\eeq
\item [] {\bf Output:} $x_R \in \{x_k, \ldots, x_N\}$ such that
\beq \label{BDG}
R = \arg\min_{k \in \{1,\ldots, N\}} \|g_{_{X,k}}\|,
\eeq
where the $g_{_{X,k}}$ is given by
\beq \label{proj_grad}
g_{_{X,k}} = P_X(x_k, \nabla f(x_k), \gamma_k).
\eeq
\end{itemize}

We can see that the above algorithm outputs the iterate with the minimum norm of the
generalized projected gradient. In the above algorithm, we have not specified the selection of the stepsizes $\{\gamma_k\}$.
We will return to this issue after establishing the following convergence results.

\begin{theorem} \label{main_theorem_det}
Suppose that the stepsizes $\{\gamma_k\}$ in the PG algorithm are chosen such that $ 0 < \gamma_k \le 2\alpha/ L$
with $\gamma_k < 2 \alpha/L$ for at least one $k$. Then, we have
\beq \label{main_cnvg_det}
\|g_{_{X,R}}\|^2
\le \frac{L D_{\Psi}^2}{\sum_{k=1}^N (\alpha \gamma_k - L\gamma_k^2/2)},
\eeq
where
\beq \label{def_Df}
g_{_{X,R}} = P_X(x_R, \nabla f(x_R), \gamma_R) \quad \mbox{and} \quad
D_{\Psi} := \left[\frac{\left(\Psi(x_1) - \Psi^*\right)}{L}\right]^\frac{1}{2}.
\eeq
\end{theorem}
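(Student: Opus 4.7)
The plan is to follow the standard ``descent lemma plus telescoping'' template, but routed through Lemma~\ref{proj_g_size} so that we control $\Psi = f+h$ (not just $f$) despite the presence of the nonsmooth term $h$.

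First I would apply the smoothness inequality \eqnok{smooth} with $y = x_{k+1}$ and $x = x_k$, and then substitute $x_{k+1} - x_k = -\gamma_k g_{_{X,k}}$ (which comes directly from the definitions \eqnok{proj_g} and \eqnok{proj_grad}). This gives an upper bound on $f(x_{k+1})$ of the form
\[
f(x_{k+1}) \le f(x_k) - \gamma_k \langle \nabla f(x_k), g_{_{X,k}} \rangle + \tfrac{L \gamma_k^2}{2} \|g_{_{X,k}}\|^2 .
\]
Next I would invoke Lemma~\ref{proj_g_size} with $g = \nabla f(x_k)$, $x = x_k$, $\gamma = \gamma_k$, whose right-hand side contains both an $\alpha \|g_{_{X,k}}\|^2$ term and the ``composite correction'' $\tfrac{1}{\gamma_k}[h(x_{k+1}) - h(x_k)]$. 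Multiplying by $\gamma_k$ and substituting into the previous display, the $h$-terms combine with $f(x_{k+1})$ and $f(x_k)$ to yield the clean per-iteration decrease
\[
\Psi(x_{k+1}) \le \Psi(x_k) - \left(\alpha \gamma_k - \tfrac{L \gamma_k^2}{2}\right) \|g_{_{X,k}}\|^2 .
\]
This is the key step: the potentially troublesome $h$ difference cancels exactly, converting a descent statement for $f$ into one for $\Psi$.

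Finally I would sum from $k=1$ to $N$ and telescope, using $\Psi(x_{N+1}) \ge \Psi^*$, to get
\[
\sum_{k=1}^N \left(\alpha \gamma_k - \tfrac{L \gamma_k^2}{2}\right) \|g_{_{X,k}}\|^2 \le \Psi(x_1) - \Psi^* = L D_{\Psi}^2 .
\]
The stepsize hypothesis $0 < \gamma_k \le 2\alpha/L$ with strict inequality somewhere ensures each coefficient $\alpha \gamma_k - L\gamma_k^2/2$ is nonnegative and the sum is strictly positive. Since $R$ is defined in \eqnok{BDG} as the index minimizing $\|g_{_{X,k}}\|$, we can pull $\|g_{_{X,R}}\|^2$ out of the sum as a lower bound on each term, and dividing yields \eqnok{main_cnvg_det}.

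The only mildly nontrivial point is spotting that applying Lemma~\ref{proj_g_size} to $\nabla f(x_k)$ produces precisely the $h(x_{k+1}) - h(x_k)$ correction needed so that one obtains descent of the composite objective $\Psi$ rather than just $f$; once this cancellation is noticed, the rest is bookkeeping. No convexity of $f$ is used, which is what makes the argument valid in the nonconvex setting.
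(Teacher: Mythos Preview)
Your proposal is correct and follows essentially the same approach as the paper: apply the smoothness bound \eqnok{smooth}, invoke Lemma~\ref{proj_g_size} with $g=\nabla f(x_k)$ to absorb the $h$-terms into a descent inequality for $\Psi$, telescope, and use the minimality of $\|g_{_{X,R}}\|$ together with the stepsize assumption to conclude. The steps and their order match the paper's proof almost line for line.
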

\begin{proof}
Since $f \in {\cal C}_L^{1,1}(X)$, it follows from \eqnok{smooth}, \eqnok{proj_g},
\eqnok{update_CPG} and \eqnok{proj_grad} that for any $k = 1, \ldots, N$, we have
\beqa
f(x_{k+1}) &\le& f(x_k) + \langle \nabla f(x_k), x_{k+1}-x_k \rangle + \frac{L}{2} \|x_{k+1}-x_k\|^2 \nn \\
&=& f(x_k) - \gamma_k \langle \nabla f(x_k), g_{_{X,k}} \rangle + \frac{L}{2} \gamma_k^2 \|g_{_{X,k}}\|^2.
\eeqa
Then, by Lemma~\ref{proj_g_size} with $x=x_k$, $\gamma = \gamma_k$ and $g =\nabla f(x_k)$, we obtain
\[
f(x_{k+1}) \le f(x_k) - \left[\alpha \gamma_k \|g_{_{X,k}}\|^2+ h(x_{k+1})-h(x_k)\right]+
\frac{L}{2} \gamma_k^2 \|g_{_{X,k}}\|^2,
\]
which implies
\beq
\Psi(x_{k+1}) \le \Psi(x_k) - \left(\alpha \gamma_k - \frac{L}{2} \gamma_k^2\right) \|g_{_{X,k}}\|^2 .
\eeq
Summing up the above inequalities for $k=1, \ldots, N$, by \eqnok{BDG} and $\gamma_k \le 2 \alpha/L$, we have
\beqa
\|g_{_{X,R}}\|^2 \sum_{k=1}^N \left(\alpha \gamma_k - \frac{L}{2} \gamma_k^2\right)
 &\le& \sum_{k=1}^N \left(\alpha \gamma_k - \frac{L}{2} \gamma_k^2\right) \|g_{_{X,k}}\|^2 \nn \\
&\le& \Psi(x_1) - \Psi(x_{k+1}) \le \Psi(x_1) - \Psi^*.
\eeqa
By our assumption, $ \sum_{k=1}^N \left(\alpha \gamma_k - L \gamma_k^2/2 \right) >0$.
Hence, dividing both sides of the above inequality by $\sum_{k=1}^N \left(\alpha \gamma_k - L \gamma_k^2/2 \right)$,
we obtain \eqnok{main_cnvg_det}.
\end{proof}

\vgap

%The above algorithm provides a way to choose a constant stepsize $\gamma \in (0, 2 \alpha/L)$.
The following corollary shows a specialized complexity result
for the PG algorithm with one proper constant stepsize policy.
\begin{corollary}
Suppose that in the PG algorithm the stepsizes $\gamma_k = \alpha/L$ for all $k=1, \ldots, N$.
Then, we have
\beq \label{proj_grad_cnvg}
\|g_{_{X,R}}\|^2 \le
\frac{2 L^2 D_{\Psi}^2}{\alpha^2 N}.
\eeq
\end{corollary}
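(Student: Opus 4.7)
The plan is to derive this corollary as a direct specialization of Theorem~\ref{main_theorem_det} by plugging in the constant stepsize policy $\gamma_k = \alpha/L$. First I would verify that this choice satisfies the hypothesis of the theorem: since $\alpha/L < 2\alpha/L$, we indeed have $0 < \gamma_k \le 2\alpha/L$ with strict inequality for every $k$, so the summability condition $\sum_{k=1}^N (\alpha \gamma_k - L\gamma_k^2/2) > 0$ holds.

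Next I would evaluate the per-iteration coefficient $\alpha \gamma_k - L\gamma_k^2/2$ at $\gamma_k = \alpha/L$. A one-line computation gives
\[
\alpha \cdot \frac{\alpha}{L} - \frac{L}{2}\left(\frac{\alpha}{L}\right)^2 = \frac{\alpha^2}{L} - \frac{\alpha^2}{2L} = \frac{\alpha^2}{2L},
\]
so summing over $k = 1, \ldots, N$ yields $\sum_{k=1}^N (\alpha \gamma_k - L\gamma_k^2/2) = N\alpha^2/(2L)$.

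Finally I would substitute this expression into the bound \eqref{main_cnvg_det}, obtaining
\[
\|g_{_{X,R}}\|^2 \le \frac{L D_\Psi^2}{N\alpha^2/(2L)} = \frac{2L^2 D_\Psi^2}{\alpha^2 N},
\]
which is exactly \eqref{proj_grad_cnvg}. There is no real obstacle here; the corollary is a clean algebraic specialization of the theorem, and the only thing worth checking carefully is that the constant $\alpha/L$ is the stepsize that maximizes the quadratic $\alpha\gamma - L\gamma^2/2$ over $\gamma \in [0, 2\alpha/L]$, which explains why this is the natural choice to state as a corollary.
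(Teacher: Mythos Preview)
Your proposal is correct and follows essentially the same approach as the paper: both arguments simply substitute the constant stepsize $\gamma_k = \alpha/L$ into the bound \eqref{main_cnvg_det} of Theorem~\ref{main_theorem_det}, compute $\sum_{k=1}^N (\alpha\gamma_k - L\gamma_k^2/2) = N\alpha^2/(2L)$, and read off the result. Your version is slightly more explicit in verifying the theorem's hypothesis and in noting that $\alpha/L$ maximizes the per-step coefficient, but the underlying proof is identical.
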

\begin{proof} With the constant stepsizes $\gamma_k = \alpha/L$ for all $k=1, \ldots, N$, we have
\beq
\frac{L D_{\Psi}^2}{\sum_{k=1}^N (\alpha \gamma_k - L\gamma_k^2/2)}
= \frac{2 L^2 D_{\Psi}^2}{N \alpha^2},
\eeq
which together with \eqnok{main_cnvg_det}, clearly imply \eqnok{proj_grad_cnvg}.
\end{proof}
%
%
%%%%%%%%%%%%%%%%%%%%%%%%%%%%%%%%%%%%%%%%%%%%
\section{Stochastic first-order methods}
\label{sec_stch_first}
%%%%%%%%%%%%%%%%%%%%%%%%%%%%%%%%%%%%%%%%%%%%
%
In this section, we consider problem \eqnok{NLP} with $f \in {\cal C}_L^{1,1}(X)$, but its exact
gradient is not available. We assume that only noisy first-order information of $f$ is available via subsequent calls
to the stochastic first-order oracle ${\cal SFO}$. In particular, given the $k$-th iteration $x_k \in X$ of our algorithm,
the ${\cal SFO}$ will output the stochastic gradient $G(x_k,\xi_k)$, where $\xi_k$ is a random vector whose distribution
is supported on  $\Xi_k \subseteq \bbr^d$. We assume the stochastic gradient $G(x_k,\xi_k)$ satisfies Assumption A1.

This section also consists of two subsections. In Subsection~\ref{sec_RSPG}, we present a stochastic variant of
the PG algorithm in Section~\ref{sec_first} incorporated with a randomized stopping criterion, called the RSPG algorithm.
Then, in Subsection~\ref{sec_2RSPG}, we describe a two phase RSPG algorithm, called the $2$-RSPG algorithm, which
can significantly reduce the large-deviations resulted from the RSPG algorithm.

\subsection{A randomized stochastic projected gradient method}\label{sec_RSPG}
Convexity of the objective function often plays an important role on establishing the convergence results for the
current SA algorithms \cite{GhaLan12-2a,GhaLan13-1,NJLS09-1,lns11,Lan10-3}.
In this subsection, we give an SA-type algorithm which does not require the convexity of the objective function.
Moreover, this weaker requirement enables the algorithm to deal with the case in which the random noises $\{\xi_k\}, k \ge1$
could depend on the iterates $\{x_k\}$.
\vskip 0.1cm

\noindent {\bf A randomized stochastic projected gradient (RSPG) algorithm}
\begin{itemize}
\item [] {\bf Input:} Given initial point $x_1 \in X$,
iteration limit $N$, the stepsizes $\{\gamma_k\}$ with $\gamma_k >0$, $k \ge 1$,
the batch sizes $\{m_k\}$ with $m_k > 0$, $k \ge 1$,
and the probability mass function $P_R$ supported on  $\{1,\ldots, N\}$.
\item [] {\bf Step } $0$. Let $R$ be a random variable with probability mass function $P_{R}$.
\item [] {\bf Step } $k=1, \ldots, R-1$. Call the ${\cal SFO}$ $m_k$ times
to obtain $G(x_k, \xi_{k,i})$, $i = 1, \ldots, m_k$, set
\beq \label{def_Gk}
G_k = \frac{1}{m_k} \sum_{i=1}^{m_k} G(x_k, \xi_{k,i}),
\eeq
and compute
\beq \label{update_RSPG}
x_{k+1} = \arg \min_{u \in X} \left\{\langle G_k, u \rangle
+\frac{1}{\gamma_k} V(u, x_k)+h(u) \right\}.
\eeq
\item [] {\bf Output:} $x_R$.
\end{itemize}

\vgap

Unlike many SA algorithms, in the RSPG algorithm we use a randomized iteration count to terminate the algorithm.
In the RSPG algorithm, we also need to specify the stepsizes $\{\gamma_k\}$,
the batch sizes $\{m_k\}$ and probability mass function $P_{R}$.
We will again address these issues after presenting some convergence results of the RSPG algorithm.
\begin{theorem} \label{main_theorem_stch}
Suppose that the stepsizes $\{\gamma_k\}$ in the RSPG algorithm are chosen such that $ 0 < \gamma_k \le \alpha/ L$
with $\gamma_k < \alpha/L$ for at least one $k$, and the probability mass function $P_R$ are chosen such that
for any $k = 1, \ldots, N$,
\beq \label{prob_fun}
P_R(k) := \Prob\{R=k\} = \frac{\alpha \gamma_k- L\gamma_k^2}
{{\sum_{k=1}^N (\alpha \gamma_k- L \gamma_k^2)}}.
\eeq
Then, under Assumption A1,
\begin{itemize}
\item [(a)] for any $N \ge 1$, we have
\beq \label{main_cnvg_stch}
\bbe[\|\tilde{g}_{_{X,R}}\|^2]
\le \frac{L D_{\Psi}^2 + (\sigma^2/\alpha) {\sum_{k=1}^N (\gamma_k/m_k)}}{{\sum_{k=1}^N (\alpha \gamma_k - L\gamma_k^2)}},
\eeq
where the expectation is taken with respect to $R$ and $\xi_{[N]} := (\xi_1,\ldots,\xi_N)$, $D_{\Psi}$ is defined
in \eqnok{def_Df}, and the stochastic projected gradient
\beq \label{proj_stch_grad}
\tilde{g}_{_{X,k}} := P_X(x_k, G_k, \gamma_k),
\eeq
with $P_X$ defined in\eqnok{proj_g};
\item [(b)] if, in addition, $f$ in problem \eqnok{NLP} is convex with an optimal solution $x^*$, and the stepsizes
$\{\gamma_k\}$ are non-decreasing, i.e.,
\beq \label{incr_stepsize}
0 \le \gamma_1 \le \gamma_2 \le ... \le \gamma_N \le \frac{\alpha}{L},
\eeq
we have
\beq \label{main_cnvg_stch_cvx1}
 \bbe \left[\Psi(x_R) - \Psi(x^*)\right]
\le \frac{(\alpha- L \gamma_1)V(x^*,x_1)+(\sigma^2/2) \sum_{k=1}^N (\gamma_k^2/m_k)}
{ \sum_{k=1}^N (\alpha \gamma_k - L\gamma_k^2)},
\eeq
where the expectation is taken with respect to $R$ and $\xi_{[N]}$.
Similarly, if the stepsizes $\{\gamma_k\}$ are non-increasing, i.e.,
\beq \label{dec_stepsize}
\frac{\alpha}{L} \ge \gamma_1 \ge \gamma_2 \ge ... \ge \gamma_N \ge 0,
\eeq
 we have
\beq \label{main_cnvg_stch_cvx2}
 \bbe \left[\Psi(x_R) - \Psi(x^*)\right]
\le \frac{(\alpha- L \gamma_N)\bar{V}(x^*)+(\sigma^2/2) \sum_{k=1}^N (\gamma_k^2 /m_k)}
{\sum_{k=1}^N (\alpha \gamma_k - L\gamma_k^2)},
\eeq
 where $\bar V(x^*):= \max_{u \in X} V(x^*,u)$.
\end{itemize}
\end{theorem}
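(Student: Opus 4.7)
The plan is to mirror the proof of Theorem~\ref{main_theorem_det} while carefully absorbing the stochastic error $\delta_k := G_k - \nabla f(x_k)$, and then to produce a weighted-average bound whose weights match the PMF $P_R$ in \eqnok{prob_fun}.

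For part (a), I would combine $L$-smoothness of $f$ with $x_{k+1} - x_k = -\gamma_k \tilde g_{_{X,k}}$ (from \eqnok{proj_g}) and Lemma~\ref{proj_g_size} applied with $g = G_k$ to derive
\[
\Psi(x_{k+1}) \le \Psi(x_k) - \bigl(\alpha\gamma_k - \tfrac{L\gamma_k^2}{2}\bigr)\|\tilde g_{_{X,k}}\|^2 + \gamma_k \langle \delta_k, \tilde g_{_{X,k}}\rangle.
\]
The crucial trick for the noise inner product is the split $\langle \delta_k, \tilde g_{_{X,k}}\rangle = \langle \delta_k, g_{_{X,k}}\rangle + \langle \delta_k, \tilde g_{_{X,k}} - g_{_{X,k}}\rangle$, where $g_{_{X,k}}$ is the noiseless projected gradient from \eqnok{proj_grad}. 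The first piece has zero conditional mean, since $g_{_{X,k}}$ is a deterministic function of $x_k$, and the second is bounded by Cauchy--Schwarz together with the Lipschitz estimate $\|\tilde g_{_{X,k}} - g_{_{X,k}}\| \le \|\delta_k\|/\alpha$ from Proposition~\ref{lip_proj_grad}. Summing in $k$, taking expectations, and using $\bbe\|\delta_k\|^2 \le \sigma^2/m_k$ together with $\Psi(x_{N+1}) \ge \Psi^*$ yields a bound with coefficient $\alpha\gamma_k - L\gamma_k^2/2$; since $\alpha\gamma_k - L\gamma_k^2 \le \alpha\gamma_k - L\gamma_k^2/2$, the inequality remains valid with the PMF weights $\alpha\gamma_k - L\gamma_k^2$, which produces \eqnok{main_cnvg_stch}.

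For part (b) I combine Lemma~\ref{comp_proj_sol} at $u = x^*$ with the convex--smooth inequality $f(x_{k+1}) - f(x^*) \le \langle \nabla f(x_k), x_{k+1} - x^*\rangle + (L/2)\|x_{k+1} - x_k\|^2$ and the strong-convexity bound $V(x_{k+1}, x_k) \ge (\alpha/2)\|x_{k+1} - x_k\|^2$ to obtain
\[
\gamma_k[\Psi(x_{k+1}) - \Psi(x^*)] \le V(x^*, x_k) - V(x^*, x_{k+1}) - \tfrac{\alpha - L\gamma_k}{2}\|x_{k+1} - x_k\|^2 - \gamma_k \langle \delta_k, x_{k+1} - x^*\rangle.
\]
To control the noise I split $x_{k+1} - x^* = (x_k - x^*) + (x_{k+1} - x_k)$: the $(x_k - x^*)$ piece contributes zero in expectation, while Young's inequality with parameter $c = \alpha - L\gamma_k$ applied to the $(x_{k+1} - x_k)$ piece produces $\gamma_k^2\|\delta_k\|^2/[2(\alpha - L\gamma_k)]$ plus a residual $[(\alpha - L\gamma_k)/2]\|x_{k+1} - x_k\|^2$ that is exactly absorbed by the existing negative term. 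Multiplying the resulting inequality through by $(\alpha - L\gamma_k)$ then cleans the noise to $\gamma_k^2\|\delta_k\|^2/2$ and gives, after taking expectations,
\[
\gamma_k(\alpha - L\gamma_k)\bbe[\Psi(x_{k+1}) - \Psi(x^*)] \le (\alpha - L\gamma_k)\bbe[V(x^*, x_k) - V(x^*, x_{k+1})] + \tfrac{\gamma_k^2 \sigma^2}{2m_k}.
\]

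The main obstacle is then the weighted telescoping $\sum_{k=1}^N (\alpha - L\gamma_k)[V(x^*, x_k) - V(x^*, x_{k+1})]$, whose coefficient $a_k := \alpha - L\gamma_k$ varies with $k$. Re-grouping by $V(x^*, x_k)$ rewrites it as $a_1 V(x^*, x_1) - a_N V(x^*, x_{N+1}) + \sum_{k=2}^N (a_k - a_{k-1}) V(x^*, x_k)$. Under the non-decreasing hypothesis \eqnok{incr_stepsize} the increments $a_k - a_{k-1}$ are non-positive, so the sum is at most $(\alpha - L\gamma_1)V(x^*, x_1)$, yielding \eqnok{main_cnvg_stch_cvx1}. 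Under the non-increasing hypothesis \eqnok{dec_stepsize} the increments are non-negative; bounding each $V(x^*, x_k)$ by $\bar V(x^*)$ collapses the sum to $(\alpha - L\gamma_N)\bar V(x^*)$, giving \eqnok{main_cnvg_stch_cvx2}. Dividing both resulting bounds by $\sum_k \gamma_k(\alpha - L\gamma_k)$ and identifying the weighted average as $\bbe[\Psi(x_R) - \Psi(x^*)]$ via $P_R$ completes the argument.
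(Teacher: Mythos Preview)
Your proposal is correct and follows essentially the same route as the paper: the same descent inequality via Lemma~\ref{proj_g_size} and the splitting $\langle \delta_k,\tilde g_{_{X,k}}\rangle = \langle \delta_k, g_{_{X,k}}\rangle + \langle \delta_k,\tilde g_{_{X,k}}-g_{_{X,k}}\rangle$ controlled by Proposition~\ref{lip_proj_grad} for part (a), and for part (b) the same combination of Lemma~\ref{comp_proj_sol}, convexity plus smoothness, Young's inequality with parameter $\alpha-L\gamma_k$, and the Abel-summation treatment of $\sum_k(\alpha-L\gamma_k)[V(x^*,x_k)-V(x^*,x_{k+1})]$ under the two monotonicity hypotheses. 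The only cosmetic difference is that the paper performs the weighted telescoping before taking expectations whereas you take expectations per step first; since the coefficients $\alpha-L\gamma_k$ are deterministic this is immaterial.
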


\begin{proof}
Let $\delta_k \equiv G_k - \nabla f(x_k)$, $k \ge 1$. Since $f \in {\cal C}_L^{1,1}(X)$, it follows from
\eqnok{smooth}, \eqnok{proj_g}, \eqnok{update_RSPG} and \eqnok{proj_stch_grad} that, for any $k = 1, \ldots, N$, we have
\beqa
f(x_{k+1}) &\le& f(x_k) + \langle \nabla f(x_k), x_{k+1}-x_k \rangle + \frac{L}{2} \|x_{k+1}-x_k\|^2 \nn \\
&=& f(x_k) -\gamma_k \langle \nabla f(x_k), \tilde {g}_{_{X,k}} \rangle + \frac{L}{2}
\gamma_k^2 \|\tilde {g}_{_{X,k}}\|^2 \nn \\
&=& f(x_k) -\gamma_k \langle G_k, \tilde {g}_{_{X,k}} \rangle + \frac{L}{2} \gamma_k^2
\|\tilde {g}_{_{X,k}}\|^2 + \gamma_k \langle \delta_k, \tilde {g}_{_{X,k}} \rangle.\label{lip_grad_f}
\eeqa
So, by Lemma~\ref{proj_g_size} with $x=x_k$, $\gamma = \gamma_k$ and $g =G_k$, we obtain
\beqas
f(x_{k+1}) & \le & f(x_k) - \left[\alpha \gamma_k \|\tilde {g}_{_{X,k}}\|^2+ h(x_{k+1})-h(x_k) \right]+
\frac{L}{2} \gamma_k^2 \|\tilde {g}_{_{X,k}}\|^2  \\
& & + \gamma_k \langle \delta_k, g_{_{X,k}} \rangle +
\gamma_k \langle \delta_k, \tilde {g}_{_{X,k}}-g_{_{X,k}} \rangle,
\eeqas
where the projected gradient $g_{_{X,k}}$ is defined in \eqnok{proj_grad}. Then, from the above inequality,
\eqnok{proj_grad} and \eqnok{proj_stch_grad}, we obtain
\beqas
\Psi(x_{k+1}) &\le& \Psi(x_k) - \left(\alpha \gamma_k - \frac{L}{2} \gamma_k^2\right) \|\tilde {g}_{_{X,k}}\|^2 +
\gamma_k \langle \delta_k, g_{_{X,k}} \rangle + \gamma_k \|\delta_k\| \|\tilde {g}_{_{X,k}}-g_{_{X,k}}\| \\
&\le& \Psi(x_k) - \left(\alpha \gamma_k - \frac{L}{2} \gamma_k^2\right) \|\tilde {g}_{_{X,k}}\|^2 +
\gamma_k \langle \delta_k, g_{_{X,k}} \rangle + \frac{\gamma_k}{\alpha} \|\delta_k\|^2,
\eeqas
where the last inequality follows from Proposition~\ref{lip_proj_grad} with
$x=x_k, \gamma=\gamma_k, g_1 =G_k$ and $g_2 =\nabla f(x_k)$.
Summing up the above inequalities for $k=1,\ldots,N$ and noticing that $\gamma_k \le \alpha/L$, we obtain
\beqa \label{main_recursion_stch}
\sum_{k=1}^N \left(\alpha \gamma_k - L \gamma_k^2\right) \|\tilde {g}_{_{X,k}}\|^2
&\le& \sum_{k=1}^N \left(\alpha \gamma_k - \frac{L}{2} \gamma_k^2\right) \|\tilde {g}_{_{X,k}}\|^2 \nn \\
&\le& \Psi(x_1) - \Psi(x_{k+1}) + \sum_{k=1}^N \left\{\gamma_k \langle \delta_k, g_{_{X,k}} \rangle
+ \frac{\gamma_k}{\alpha} \|\delta_k\|^2\right\} \nn \\
&\le& \Psi(x_1) - \Psi^* + \sum_{k=1}^N \left\{\gamma_k \langle \delta_k, g_{_{X,k}} \rangle
+ \frac{\gamma_k}{ \alpha} \|\delta_k\|^2\right\}.
\eeqa
Notice that the iterate $x_k$ is a function of the history $\xi_{[k-1]}$ of the generated random process and
hence is random. By part a) of Assumption A1, we have
$\bbe[\langle \delta_k , g_{_{X,k}} \rangle |\xi_{[k-1]}] = 0$. In addition, denoting $S_j = \sum_{i=1}^{j} \delta_{k,i}$,
and noting that $\bbe[\langle S_{i-1}, \delta_{k, i}\rangle |S_{i-1}] = 0$ for
all $i = 1, \ldots, m_k$, we have
%$\bbe[\|\delta_k\|^2] \le \sigma^2 /$.
\begin{align}
\bbe[\|S_{m_k}\|^2 ]
&= \bbe \left[ \|S_{m_k -1}\|^2 + 2 \langle S_{m_k -1},
\delta_{k,m_k} \rangle + \|\delta_{k,m_k}\|^2\right] \nn\\
&= \bbe[\|S_{m_k -1}\|^2] + \bbe [\|\delta_{k,m_k}\|^2]
= \ldots =  \sum_{i=1}^{m_k} \|\delta_{k,i}\|^2,\nn
%\bbe
%\|\frac{1}{m_k}\sum_{i=1}^{m_k} \left[G(x_k,\xi_{k,i}) - \nabla f(x_k)\right]\|^2 \le \frac{\sigma^2}{m_k}.
\end{align}
which, in view of \eqnok{def_Gk} and Assumption A1.b), then implies that
\beq \label{dec_sigma0}
\bbe[\|\delta_k\|^2 ] = \frac{1}{m_k^2} \bbe[\|S_{m_k}\|^2 ] = \frac{1}{m_k^2}  \sum_{i=1}^{m_k}
\bbe[\|\delta_{k,i}\|^2] \le \frac{\sigma^2}{m_k}.
\eeq
With these observations, now taking expectations with respect to $\xi_{[N]}$
on both sides of \eqnok{main_recursion_stch}, we get
\[
\sum_{k=1}^N \left(\alpha \gamma_k - L\gamma_k^2\right) \bbe_{\xi_{[N]}} \|\tilde {g}_{_{X,k}}\|^2 \le
\Psi(x_1) - \Psi^* + (\sigma^2/ \alpha) \sum_{k=1}^N (\gamma_k/ m_k).
\]
Then, since $ \sum_{k=1}^N \left(\alpha \gamma_k - L \gamma_k^2 \right)>0$ by our assumption,
dividing both sides of the above inequality by $\sum_{k=1}^N \left(\alpha \gamma_k - L \gamma_k^2 \right)$ and
noticing that
\[
\bbe[\|\tilde{g}_{_{X,R}}\|^2] = \bbe_{R, \xi_{[N]}}[\|\tilde{g}_{_{X,R}}\|^2] =
\frac{\sum_{k=1}^N \left(\alpha \gamma_k - L \gamma_k^2\right) \bbe_{\xi_{[N]}}\|\tilde{g}_{_{X,k}}\|^2}
{\sum_{k=1}^N \left(\alpha \gamma_k - L \gamma_k^2\right)},
\]
we have  \eqnok{main_cnvg_stch} holds.

We now show part (b) of the theorem. By Lemma~\ref{comp_proj_sol} with
$x=x_k, \gamma=\gamma_k, g=G_k$ and $u=x^*$, we have
\[
\langle G_k, x_{k+1} \rangle + h(x_{k+1})+\frac{1}{\gamma_k} V(x_{k+1}, x_k)
\le \langle G_k, x^* \rangle +h(x^*)+ \frac{1}{\gamma_k} [V(x^*, x_k) - V(x^*, x_{k+1})],\nn
\]
which together with \eqnok{smooth} and definition of $\delta_k$ give
\beqa
 & & f(x_{k+1})+\langle \nabla f(x_k)+\delta_k, x_{k+1} \rangle + h(x_{k+1})+ \frac{1}{\gamma_k} V(x_{k+1}, x_k) \nn \\
&\le & f(x_k)+\langle \nabla f(x_k), x_{k+1}-x_k \rangle + \frac{L}{2} \|x_{k+1}-x_k\|^2 +
\langle \nabla f(x_k)+\delta_k, x^* \rangle + h(x^*) \nn \\
 & & + \frac{1}{\gamma_k} [V(x^*, x_k) - V(x^*, x_{k+1})].\nn
\eeqa
Simplifying the above inequality, we have
\beqa
\Psi(x_{k+1})
&\le& f(x_k)+\langle \nabla f(x_k), x^*-x_k \rangle +h(x^*)+\langle \delta_k, x^* -x_{k+1}\rangle
+\frac{L}{2} \|x_{k+1}-x_k\|^2  \nn \\
& & - \frac{1}{\gamma_k} V(x_{k+1}, x_k)+\frac{1}{\gamma_k} [V(x^*, x_k) - V(x^*, x_{k+1})].\nn
\eeqa
Then, it follows from the convexity of $f$, \eqnok{strg_cnvx} and \eqnok{prox_fun} that
\beqa
\Psi(x_{k+1})
&\le& f(x^*)+h(x^*) +\langle \delta_k, x^* -x_{k+1}\rangle +\left(\frac{L}{2}-\frac{\alpha}{2 \gamma_k}\right)
\|x_{k+1}-x_k\|^2  \nn \\
& & +\frac{1}{\gamma_k} [V(x^*, x_k) - V(x^*, x_{k+1})]\nn \\
&=& \Psi(x^*)+\langle \delta_k, x^* -x_k\rangle +\langle \delta_k, x_k -x_{k+1}\rangle +
\frac{L \gamma_k -\alpha}{2 \gamma_k} \|x_{k+1}-x_k\|^2 \nn \\
& & +\frac{1}{\gamma_k} [V(x^*, x_k) - V(x^*, x_{k+1})]\nn \\
&\le& \Psi(x^*)+\langle \delta_k, x^* -x_k\rangle +\frac{\gamma_k}{2 (\alpha- L \gamma_k )} \|\delta_k\|^2
+\frac{1}{\gamma_k} [V(x^*, x_k) - V(x^*, x_{k+1})],\nn
\eeqa
where the last inequality follows from Young's inequality.
Noticing  $\gamma_k \le \alpha/L$, multiplying both sides of the above inequality by $(\alpha \gamma_k- L \gamma_k^2)$
and summing them up for $k=1,\ldots,N$, we obtain
\beqa
\sum_{k=1}^N (\alpha \gamma_k - L \gamma_k^2 ) \left[\Psi(x_{k+1}) - \Psi(x^*)\right]
&\le& \sum_{k=1}^N (\alpha \gamma_k - L \gamma_k^2 ) \langle \delta_k, x^* -x_k\rangle + \sum_{k=1}^N \frac{\gamma_k^2}{2}\|\delta_k\|^2 \nn \\
&+&\sum_{k=1}^N(\alpha- L \gamma_k)\left[V(x^*, x_k) - V(x^*, x_{k+1})\right].\label{main_recursion_stch1}
\eeqa
Now, if the increasing stepsize condition \eqnok{incr_stepsize} is satisfied, we have from $V(x^*, x_{N+1}) \ge 0$ that
\beqa
 & & \sum_{k=1}^N(\alpha- L \gamma_k)\left[V(x^*, x_k) - V(x^*, x_{k+1})\right] \nn \\
&=& (\alpha- L \gamma_1)V(x^*, x_1)+\sum_{k=2}^N(\alpha- L \gamma_k)V(x^*, x_k)- \sum_{k=1}^N
(\alpha- L \gamma_k)V(x^*, x_{k+1}) \nn \\
&\le& (\alpha- L \gamma_1)V(x^*, x_1)+\sum_{k=2}^N(\alpha- L \gamma_{k-1})V(x^*, x_k)-
\sum_{k=1}^N (\alpha- L \gamma_k)V(x^*, x_{k+1}) \nn \\
&=&(\alpha- L \gamma_1)V(x^*, x_1)-(\alpha- L \gamma_N)V(x^*, x_{N+1}) \nn \\
&\le& (\alpha- L \gamma_1)V(x^*, x_1). \nn
\eeqa
Taking expectation on both sides of \eqnok{main_recursion_stch1} with respect to $\xi_{[N]}$,
again using the observations that $\bbe[\|\delta_k^2\|] \le \sigma^2$ and
$\bbe[\langle \delta_k , g_{_{X,k}} \rangle |\xi_{[k-1]}] = 0$, then it follows from the above inequality that
\[
\sum_{k=1}^N (\alpha \gamma_k - L \gamma_k^2 ) \bbe_{\xi_{[N]}} \left[\Psi(x_{k+1}) - \Psi(x^*)\right]
\le (\alpha- L \gamma_1)V(x^*,x_1) + \frac{\sigma^2}{2} \sum_{k=1}^N (\gamma_k^2/ m_k).
\]
Finally, \eqnok{main_cnvg_stch_cvx1} follows from the above inequality and the arguments similar to the
proof in part (a). Now, if the decreasing stepsize condition \eqnok{dec_stepsize} is satisfied,
we have from the definition $\bar V(x^*):= \max_{u \in X} V(x^*,u) \ge 0$ and  $V(x^*, x_{N+1}) \ge 0$ that
\beqa
 & & \sum_{k=1}^N(\alpha- L \gamma_k)\left[V(x^*, x_k) - V(x^*, x_{k+1})\right] \nn \\
& = & (\alpha- L \gamma_1) V(x^*,x_1) + L \sum_{k=1}^{N-1} (\gamma_k - \gamma_{k+1})
V(x^*, x_{k+1}) - (\alpha- L \gamma_N) V(x^*, x_{N+1}) \nn \\
&\le& (\alpha- L \gamma_1) \bar{V}(x^*) + L \sum_{k=1}^{N-1}  (\gamma_k - \gamma_{k+1})
\bar{V}(x^*) - (\alpha- L \gamma_N)V(x^*, x_{N+1}) \nn \\
&\le & (\alpha- L \gamma_N)\bar{V}(x^*), \nn
\eeqa
which together with \eqnok{main_recursion_stch1} and similar arguments used above would give \eqnok{main_cnvg_stch_cvx2}.
\end{proof}

\vgap

A few remarks about Theorem~\ref{main_theorem_stch} are in place.
Firstly, if $f$ is convex and the batch sizes $m_k =1$, then by properly choosing
the stepsizes $\{\gamma_k\}$ (e.g., $\gamma_k = \C{O} (1/\sqrt{k})$ for $k$ large),
we can still guarantee a nearly optimal rate of convergence for the RSPG algorithm (see
\eqnok{main_cnvg_stch_cvx1} or \eqnok{main_cnvg_stch_cvx2}, and \cite{NJLS09-1,Lan10-3}).
%by using \eqnok{main_cnvg_stch_cvx1} or \eqnok{main_cnvg_stch_cvx2} 
However, if $f$ is possibly nonconvex and $m_k =1$, then the RHS of \eqnok{main_cnvg_stch}
is bounded from below by
\[
 \frac{L D_{\Psi}^2 + (\sigma^2/\alpha) {\sum_{k=1}^N \gamma_k}}{{\sum_{k=1}^N (\alpha \gamma_k - L\gamma_k^2)}}
 \ge \frac{\sigma^2}{\alpha^2},
\]
which does not necessarily guarantee the converge of the RSPG algorithm, no matter how the stepsizes
$\{\gamma_k\}$ are specified. This is exactly the reason why we
consider taking multiple samples $G(x_k, \xi_{k, i})$,
$i = 1, \ldots, m_k$, for some $m_k > 1$ at each iteration
of the RSPG method.

Secondly, from \eqnok{main_recursion_stch} in the proof of Theorem~\ref{main_theorem_stch}, we see that the stepsize
policies can be further relaxed to get a similar result as \eqnok{main_cnvg_stch}. More specifically, we can
have the following corollary.
\begin{corollary}
Suppose that the stepsizes $\{\gamma_k\}$ in the RSPG algorithm are chosen such that $ 0 < \gamma_k \le 2 \alpha/ L$
with $\gamma_k < 2 \alpha/L$ for at least one $k$, and the probability mass function $P_R$ are chosen such that
for any $k = 1, \ldots, N$
\beq \label{prob_fun-relax}
P_R(k) := \Prob\{R=k\} = \frac{\alpha \gamma_k- L\gamma_k^2/2}
{{\sum_{k=1}^N (\alpha \gamma_k- L \gamma_k^2/2)}}.
\eeq
Then, under Assumption A1, we have
\beq \label{cnvg_stch-relax}
\bbe[\|\tilde{g}_{_{X,R}}\|^2]
\le \frac{L D_{\Psi}^2 + (\sigma^2/\alpha) {\sum_{k=1}^N (\gamma_k/ m_k)}}
{{\sum_{k=1}^N (\alpha \gamma_k - L\gamma_k^2/2)}},
\eeq
where the expectation is taken with respect to $R$ and $\xi_{[N]} := (\xi_1,...,\xi_N)$.
\end{corollary}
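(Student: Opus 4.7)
The plan is to revisit the chain of inequalities leading to \eqnok{main_recursion_stch} in the proof of Theorem~\ref{main_theorem_stch}(a) and observe that the coefficient $\alpha\gamma_k - L\gamma_k^2$ used there was already a loose lower bound on the naturally arising coefficient $\alpha\gamma_k - L\gamma_k^2/2$, the looser form being introduced only to match the stepsize condition $\gamma_k \le \alpha/L$. If one retains the tighter coefficient throughout, the only restriction needed for nonnegativity is $\gamma_k \le 2\alpha/L$, which is exactly the hypothesis under which the probability mass function \eqnok{prob_fun-relax} is well-defined.

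Concretely, I would repeat the estimates in the proof of Theorem~\ref{main_theorem_stch}(a) verbatim, without any additional weakening: start from \eqnok{smooth} and \eqnok{update_RSPG} to obtain \eqnok{lip_grad_f}; apply Lemma~\ref{proj_g_size} with $x=x_k$, $\gamma = \gamma_k$, $g=G_k$; and use Proposition~\ref{lip_proj_grad} (with $g_1 = G_k$, $g_2 = \nabla f(x_k)$) to bound $\|\tilde{g}_{_{X,k}} - g_{_{X,k}}\| \le \|\delta_k\|/\alpha$. This yields, for each $k = 1,\ldots,N$,
\[
\Psi(x_{k+1}) \le \Psi(x_k) - \bigl(\alpha\gamma_k - L\gamma_k^2/2\bigr)\|\tilde{g}_{_{X,k}}\|^2 + \gamma_k \langle \delta_k, g_{_{X,k}} \rangle + \frac{\gamma_k}{\alpha}\|\delta_k\|^2.
\]

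Next, I would sum over $k = 1, \ldots, N$ and use $\Psi(x_{N+1}) \ge \Psi^*$ to telescope, then take expectations over $\xi_{[N]}$. Two observations from the proof of Theorem~\ref{main_theorem_stch}(a) carry over unchanged: since $g_{_{X,k}}$ is a function of $\xi_{[k-1]}$, Assumption~A1.a) gives $\bbe[\langle \delta_k, g_{_{X,k}}\rangle \mid \xi_{[k-1]}] = 0$; and by \eqnok{dec_sigma0} we have $\bbe[\|\delta_k\|^2] \le \sigma^2/m_k$. This produces
\[
\sum_{k=1}^N \bigl(\alpha\gamma_k - L\gamma_k^2/2\bigr)\, \bbe_{\xi_{[N]}}\|\tilde{g}_{_{X,k}}\|^2 \le \Psi(x_1) - \Psi^* + \frac{\sigma^2}{\alpha}\sum_{k=1}^N \frac{\gamma_k}{m_k}.
\]

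Finally, the assumption that $\gamma_k < 2\alpha/L$ for at least one $k$ ensures $\sum_{k=1}^N(\alpha\gamma_k - L\gamma_k^2/2)>0$, so both sides can be divided by this quantity. The definition of $P_R$ in \eqnok{prob_fun-relax} then identifies the weighted average on the left with $\bbe[\|\tilde{g}_{_{X,R}}\|^2]$, yielding \eqnok{cnvg_stch-relax}. There is no real obstacle here: the entire content of the corollary lies in noticing that the intermediate descent inequality already carries the sharper coefficient, so the stepsize bound and the defining weights of $P_R$ can be shifted accordingly without altering the rest of the argument.
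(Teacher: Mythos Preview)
Your proposal is correct and matches the paper's own reasoning: the paper does not give a separate proof for this corollary but simply remarks that it follows by retaining the sharper coefficient $\alpha\gamma_k - L\gamma_k^2/2$ in the middle of inequality \eqnok{main_recursion_stch} rather than passing to the looser $\alpha\gamma_k - L\gamma_k^2$.
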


\vgap

Based on the Theorem~\ref{main_theorem_stch}, we can establish the
following complexity results of the RSPG algorithm with proper selection of
stepsizes $\{\gamma_k\}$  and batch sizes $\{ m_k\}$ at each iteration.

\begin{corollary} \label{RSPG_m0}
Suppose that in the RSPG algorithm the stepsizes $\gamma_k = \alpha/(2L)$ for all $k=1,\ldots,N$,
and the probability mass function $P_R$ are chosen as \eqnok{prob_fun}.
Also assume that the batch sizes $m_k = m$, $k = 1, \ldots, N$, for some $m \ge 1$.
Then under Assumption A1, we have
\beq
\bbe[\|g_{_{X,R}}\|^2]  \le
\frac{8 L^2 D_{\Psi}^2}{\alpha^2 N} + \frac{6 \sigma^2}{\alpha^2 m} \quad \mbox{and} \quad
\bbe[\|\tilde g_{_{X,R}}\|^2]  \le
\frac{4L^2 D_{\Psi}^2}{\alpha^2 N} + \frac{2\sigma^2}{\alpha^2 m},
 \label{proj_grad_cnvg_stch}
\eeq
where $g_{_{X,R}}$ and $\tilde g_{_{X,R}}$  are defined in \eqnok{proj_grad}
and \eqnok{proj_stch_grad}, respectively.
%\beq \label{proj_stch_grad2}
%\bar g_{_{X,R}} = P_X(x_k, \bar G_m(x_R), \gamma_R),
%\eeq
%and $g_{_{X,R}}$ is defined in \eqnok{proj_grad}.
If, in addition, $f$ in the problem \eqnok{NLP} is convex with an optimal solution $x^*$, then
\beq \label{proj_stch_grad_cnvg_stch_cvx}
\bbe \left[\Psi(x_R) - \Psi(x^*)\right]
\le \frac{2LV(x^*,x_1)}{N\alpha}+\frac{\sigma^2}{2Lm}.
\eeq
\end{corollary}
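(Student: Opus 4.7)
The plan is to apply Theorem~\ref{main_theorem_stch} with the specified constant stepsize $\gamma_k = \alpha/(2L)$ and constant batch size $m_k = m$, and then translate the bound on the stochastic projected gradient $\tilde g_{_{X,R}}$ into a bound on the true projected gradient $g_{_{X,R}}$ via Proposition~\ref{lip_proj_grad}.

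First, observe that with $\gamma_k = \alpha/(2L)$, each summand $\alpha \gamma_k - L\gamma_k^2$ evaluates to $\alpha^2/(2L) - \alpha^2/(4L) = \alpha^2/(4L)$, and $\gamma_k/m_k = \alpha/(2Lm)$. Plugging these into the RHS of \eqnok{main_cnvg_stch}, the denominator becomes $N\alpha^2/(4L)$ and the numerator becomes $L D_\Psi^2 + N\sigma^2/(2Lm)$; straightforward simplification then yields the second inequality in \eqnok{proj_grad_cnvg_stch}, namely
\[
\bbe[\|\tilde g_{_{X,R}}\|^2] \le \frac{4L^2 D_\Psi^2}{\alpha^2 N} + \frac{2\sigma^2}{\alpha^2 m}.
\]

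Next, to pass from $\tilde g_{_{X,R}}$ to $g_{_{X,R}}$, I would invoke Proposition~\ref{lip_proj_grad} at each iterate $x_k$ with $g_1 = G_k$ and $g_2 = \nabla f(x_k)$ to obtain $\|g_{_{X,k}} - \tilde g_{_{X,k}}\| \le \|\delta_k\|/\alpha$, where $\delta_k := G_k - \nabla f(x_k)$. Combining with the elementary inequality $\|g_{_{X,k}}\|^2 \le 2\|\tilde g_{_{X,k}}\|^2 + 2\|g_{_{X,k}} - \tilde g_{_{X,k}}\|^2$ and taking expectations, the bound \eqnok{dec_sigma0} from the proof of Theorem~\ref{main_theorem_stch} gives $\bbe[\|\delta_k\|^2] \le \sigma^2/m$, hence
\[
\bbe[\|g_{_{X,R}}\|^2] \le 2\,\bbe[\|\tilde g_{_{X,R}}\|^2] + \frac{2\sigma^2}{\alpha^2 m}.
\]
Substituting the already-derived bound on $\bbe[\|\tilde g_{_{X,R}}\|^2]$ produces the first inequality in \eqnok{proj_grad_cnvg_stch}.

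Finally, for the convex case, the constant stepsize $\gamma_k \equiv \alpha/(2L)$ trivially satisfies both \eqnok{incr_stepsize} and \eqnok{dec_stepsize}, so I would apply part~(b) of Theorem~\ref{main_theorem_stch}, e.g.\ \eqnok{main_cnvg_stch_cvx1}. Here $\alpha - L\gamma_1 = \alpha/2$ and $\sum_{k=1}^N \gamma_k^2/m_k = N\alpha^2/(4L^2 m)$, so the RHS becomes $[(\alpha/2)V(x^*,x_1) + N\alpha^2\sigma^2/(8L^2 m)]/[N\alpha^2/(4L)]$, which simplifies to \eqnok{proj_stch_grad_cnvg_stch_cvx}. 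The whole argument is essentially bookkeeping on top of Theorem~\ref{main_theorem_stch} and Proposition~\ref{lip_proj_grad}; the only mildly delicate point is the triangle-inequality step needed to bound $\bbe[\|g_{_{X,R}}\|^2]$, since it introduces the extra factor of $2$ (and the additional $2\sigma^2/(\alpha^2 m)$ term) that accounts for the difference between the constants $4$ and $6$ in the two bounds of \eqnok{proj_grad_cnvg_stch}.
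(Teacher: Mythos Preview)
Your proposal is correct and follows essentially the same route as the paper: first specialize \eqnok{main_cnvg_stch} with $\gamma_k=\alpha/(2L)$ and $m_k=m$ to obtain the bound on $\bbe[\|\tilde g_{_{X,R}}\|^2]$, then use Proposition~\ref{lip_proj_grad} together with $\|g_{_{X,R}}\|^2 \le 2\|\tilde g_{_{X,R}}\|^2 + 2\|g_{_{X,R}}-\tilde g_{_{X,R}}\|^2$ and the variance bound \eqnok{dec_sigma0} to pass to $\bbe[\|g_{_{X,R}}\|^2]$; the convex bound is obtained by plugging the constant stepsize into \eqnok{main_cnvg_stch_cvx1}. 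The paper does exactly this (applying Proposition~\ref{lip_proj_grad} directly at $x_R$ rather than phrasing it at each $x_k$, which is cosmetic), so there is nothing to add.
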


\begin{proof}
%First note that if we take $m$ samples of the stochastic gradients at each iteration of the RSPG method,
%then under Assumption A1, we have from from Lemma~\ref{marting}.a with $\zeta_i = G(x_k,\xi_{k,i}) - \nabla f(x_k)$ that
%\beq \label{dec_sigma}
%\bbe \left[ \|\bar G_m(x_k) - \nabla f(x_k)\|^2 \right] = \bbe
%\|\frac{1}{m}\sum_{i=1}^m \left[G(x_k,\xi_{k,i}) - \nabla f(x_k)\right]\|^2 \le \frac{\sigma^2}{m}.
%\eeq
%Hence, by using $\bar{G}(x_k)$ instead of $G_k$ in \eqnok{update_RSPG}, similar to
By \eqnok{main_cnvg_stch}, we have
\[
\bbe[\|\tilde g_{_{X,R}}\|^2] \le \frac{ L D_{\Psi}^2 + \frac{\sigma^2}{m \alpha}
{\sum_{k=1}^N \gamma_k}}{{\sum_{k=1}^N (\alpha \gamma_k - L\gamma_k^2)}},
\]
which together with $\gamma_k = \alpha/(2L)$ for all $k=1,\ldots,N$ imply that
\[
\bbe[\|\tilde g_{_{X,R}}\|^2] = \frac{L D_{\Psi}^2 + \frac{\sigma^2 N}{2mL}}{\frac{N \alpha^2}{4L}}
= \frac{4L^2 D_{\Psi}^2}{N \alpha^2} + \frac{2\sigma^2}{m\alpha^2}.
\]
Then, by Proposition~\ref{lip_proj_grad} with $x=x_R, \gamma=\gamma_R, g_1 = \nabla f(x_R), g_2= G_k$,
we have from the above inequality and \eqnok{dec_sigma0} that
\beqa
\bbe[\|g_{_{X,R}}\|^2] &\le& 2 \bbe[\|\tilde g_{_{X,R}}\|^2] + 2\bbe[\|g_{_{X,R}}-\tilde g_{_{X,R}}\|^2] \nn \\
&\le& 2 \left( \frac{4L^2 D_{\Psi}^2}{N \alpha^2}+\frac{2\sigma^2}{\alpha^2 m}\right) +
\frac{2}{\alpha^2}\bbe \left[ \|G_k-\nabla f(x_R)\|^2 \right]  \nn \\
&\le& \frac{8L^2 D_{\Psi}^2}{N \alpha^2}+ \frac{6\sigma^2}{\alpha^2 m}. \nn
\eeqa

Moreover, since $\gamma_k = \alpha/(2L)$ for all $k=1,\ldots,N$, the stepsize conditions \eqnok{incr_stepsize}
are satisfied. Hence, if the problem is convex, \eqnok{proj_stch_grad_cnvg_stch_cvx}
can be derived in a similar way as \eqnok{main_cnvg_stch_cvx1}.
\end{proof}

\vgap

Note that all the bounds in the above corollary depend on $m$.
%For example, if $m=1$, then the second terms in bounds
%\eqnok{proj_grad_cnvg_stch} and \eqnok{proj_stch_grad_cnvg_stch_cvx} are fixed and the algorithm actually does not converge.
Indeed, if $m$ is set to some fixed positive integer constant, then the second terms in the above results will
always majorize the first terms when $N$ is sufficiently large. Hence, the appropriate choice of $m$ should be
balanced with the number of iterations $N$, which would eventually depend on the total computational budget given
by the user. The following corollary shows an appropriate choice of $m$ depending on the total number of calls
to the ${\cal SFO}$.
\begin{corollary} \label{RSPG_m}
Suppose that all the conditions in Corollary~\ref{RSPG_m0} are satisfied.
Given a fixed total  number of calls $\bar N$ to the ${\cal SFO}$, if
the number of calls to the ${\cal SFO}$ (number of samples) at each iteration of the RSPG algorithm is
\beq \label{def_m}
m =\left \lceil \min \left\{ \max \left\{1, \frac{\sigma \sqrt{6 \bar N}}{4 L \tilde D} \right\}, \bar N
\right\} \right \rceil,
\eeq
for some $\tilde D >0$, then we have $ (\alpha^2/L) \; \bbe[\|g_{_{X,R}}\|^2] \le {\cal B}_{\bar N}$, where
\beq \label{proj_stch_grad_cnvg_stch_m}
{\cal B}_{\bar N}:=\frac{16 L D_{\Psi}^2}{\bar N}+
\frac{4 \sqrt 6 \sigma}{\sqrt{\bar N}} \left( \frac{D_{\Psi}^2}{\tilde D} +
\tilde D \max \left\{1,\frac{\sqrt 6 \sigma}{4 L \tilde D \sqrt{\bar N} }  \right\} \right).
\eeq

If, in addition, $f$ in problem \eqnok{NLP} is convex, then
$\bbe[\Psi(x_R)-\Psi(x^*)] \le {\cal C}_{\bar N}$, where $x^*$ is an optimal solution and
\beq \label{proj_stch_grad_cnvg_stch_cvx_m}
{\cal C}_{\bar N}:=\frac{4 L V(x^*,x_1)}{\alpha \bar N} + \frac{\sqrt 6 \sigma}{\alpha \sqrt{\bar N}}
\left( \frac{V(x^*,x_1)}{\tilde D} +
\frac{\alpha \tilde D}{3} \max \left\{1,\frac{\sqrt 6 \sigma}{4 L \tilde D \sqrt{\bar N} } \right\} \right).
\eeq
\end{corollary}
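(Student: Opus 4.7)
I plan to reduce the claim to the two bounds already proved in Corollary~\ref{RSPG_m0} and then choose the batch size $m$ so that the stochastic and deterministic terms are balanced. Since the RSPG algorithm uses $m$ samples per iteration, a total SFO budget of $\bar N$ allows $N=\lfloor \bar N/m\rfloor$ iterations, and the definition \eqref{def_m} ensures $1\le m\le \bar N$, giving the elementary estimate $N \ge \bar N/(2m)$ and hence $1/N\le 2m/\bar N$. Plugging this into \eqref{proj_grad_cnvg_stch} and multiplying by $\alpha^2/L$, the first claim reduces to showing
\[
\frac{16LD_{\Psi}^2 m}{\bar N}+\frac{6\sigma^2}{Lm}\;\le\;{\cal B}_{\bar N},
\]
and similarly \eqref{proj_stch_grad_cnvg_stch_cvx} yields a corresponding inequality with $V(x^*,x_1)$ and ${\cal C}_{\bar N}$ for the convex case.

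Next, I would split the verification into three regimes according to which branch of the $\min/\max$ in \eqref{def_m} is active. \textbf{Regime 1:} $\sigma\sqrt{6\bar N}/(4L\tilde D)\le 1$, so $m=1$. Then the deterministic part matches the first summand of ${\cal B}_{\bar N}$, and the defining inequality rewrites as $\sigma\sqrt{6\bar N}\le 4L\tilde D$, which yields $6\sigma^2/L\le 4\sqrt 6\,\sigma\tilde D/\sqrt{\bar N}$; together with $\tilde D\le D_{\Psi}^2/\tilde D+\tilde D$ (for $\tilde D>0$), this is absorbed into the second summand of ${\cal B}_{\bar N}$ (here the $\max$ equals $1$). \textbf{Regime 2:} $1<\sigma\sqrt{6\bar N}/(4L\tilde D)<\bar N$, so $m=\lceil\sigma\sqrt{6\bar N}/(4L\tilde D)\rceil$. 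Using the two-sided bound $\sigma\sqrt{6\bar N}/(4L\tilde D)\le m\le \sigma\sqrt{6\bar N}/(4L\tilde D)+1$, direct substitution shows the first term contributes $16LD_{\Psi}^2/\bar N+4\sqrt 6\,\sigma D_{\Psi}^2/(\tilde D\sqrt{\bar N})$ and the noise term contributes $4\sqrt 6\,\sigma\tilde D/\sqrt{\bar N}$, again matching ${\cal B}_{\bar N}$ with the $\max$ equal to $1$. \textbf{Regime 3:} $\sigma\sqrt{6\bar N}/(4L\tilde D)\ge \bar N$, so $m=\bar N$ and $N=1$; here the deterministic term jumps to $16LD_{\Psi}^2$, which is dominated by $4\sqrt 6\,\sigma D_{\Psi}^2/(\tilde D\sqrt{\bar N})$ because the regime condition rearranges exactly to $4\sqrt 6\,\sigma/(\tilde D\sqrt{\bar N})\ge 16L$, and the noise term $6\sigma^2/(L\bar N)$ is precisely $4\sqrt 6\,\sigma\tilde D/\sqrt{\bar N}\cdot\sqrt 6\,\sigma/(4L\tilde D\sqrt{\bar N})$, which is the contribution of the second branch of the $\max$ in ${\cal B}_{\bar N}$.

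For the convex case I would run the same three-regime analysis, this time against ${\cal C}_{\bar N}$, starting from \eqref{proj_stch_grad_cnvg_stch_cvx} in place of \eqref{proj_grad_cnvg_stch}; the calculations are structurally identical, with $L V(x^*,x_1)/\alpha$ in the deterministic term and an additional factor of $\tfrac{1}{12}$ on the noise term (coming from $\sigma^2/(2Lm)$ versus $6\sigma^2/(Lm)$), and these are absorbed by the coefficients built into ${\cal C}_{\bar N}$. The only step that requires any care is Regime~3, where one must check that the now-large deterministic term $16LD_{\Psi}^2$ (resp.\ $4LV(x^*,x_1)/\alpha$) can be swallowed by the stochastic part of the bound; this is guaranteed only because the activating inequality for Regime~3 is tight enough to force $4\sqrt 6\,\sigma/(\tilde D\sqrt{\bar N})\ge 16L$. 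Once this is observed, everything else is straightforward algebraic substitution.
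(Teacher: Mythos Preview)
Your proposal is correct and follows essentially the same approach as the paper: both start from Corollary~\ref{RSPG_m0}, use $N\ge \bar N/(2m)$, and then bound $m$ from above and $1/m$ from below according to \eqref{def_m}. The only difference is cosmetic: you split into three explicit regimes, whereas the paper handles all cases at once via the uniform estimates $m\le 1+\sigma\sqrt{6\bar N}/(4L\tilde D)$ and $6\sigma^2/m\le \max\{4\sqrt 6 L\tilde D\sigma/\sqrt{\bar N},\,6\sigma^2/\bar N\}$.
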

\begin{proof}
Given the total number of calls to the stochastic first-order oracle $\bar N$ and
the number $m$ of calls to the ${\cal SFO}$ at each iteration, the RSPG algorithm can
perform at most $N = \lfloor \bar N/m \rfloor$ iterations. Obviously, $N \ge \bar N/ (2m)$.
With this observation and \eqnok{proj_grad_cnvg_stch}, we have
\beqa
\bbe[\|g_{_{X,R}}\|^2] &\le& \frac{16 m L^2 D_{\Psi}^2}{\alpha^2 \bar N} + \frac{6 \sigma^2}{\alpha^2 m} \nn \\
&\le& \frac{16 L^2 D_{\Psi}^2}{\alpha^2 \bar N} \left(1+\frac{\sigma \sqrt{6 \bar N}}{4  L \tilde D}\right)
+ \max \left\{ \frac{4 \sqrt 6 L \tilde D \sigma}{\alpha^2 \sqrt{\bar N}}, \frac{6 \sigma^2}{\alpha^2 \bar N} \right\} \nn \\
&=& \frac{16 L^2 D_{\Psi}^2}{ \alpha^2 \bar N}+ \frac{4 \sqrt 6 L \sigma}{\alpha^2 \sqrt{\bar N}}
\left( \frac{D_{\Psi}^2}{\tilde D} + \tilde D \max \left\{1,\frac{\sqrt 6 \sigma}{4 L \tilde D \sqrt{\bar N} }
 \right\} \right),
\eeqa
which gives \eqnok{proj_stch_grad_cnvg_stch_m}.
The bound \eqnok{proj_stch_grad_cnvg_stch_cvx_m} can be obtained in a similar way.
\end{proof}

\vgap

We now would like add a few remarks about the above results in Corollary~\ref{RSPG_m}.
Firstly, although we use the constant value for $m_k=m$ at each iteration, one can also choose it adaptively
during the execution of the RSPG algorithm while monitoring the convergence. For example, in practice
$m_k$ could adaptively depend on $\sigma_k^2 :=\bbe \left[ \|G(x_k, \xi_k) - \nabla f(x_k)\|^2 \right]$.
Secondly, we need to specify the parameter $\tilde D$ in \eqnok{def_m}. It can be seen
from \eqnok{proj_stch_grad_cnvg_stch_m} and \eqnok{proj_stch_grad_cnvg_stch_cvx_m} that when $\bar N$ is relatively
large such that
\beq \label{large-barN}
 \max \left\{1,\sqrt 6 \sigma/(4 L \tilde D \sqrt{\bar N})  \right\} = 1, \quad \mbox{i.e.,} \quad
\bar N \ge 3 \sigma^2/(8 L^2 \tilde{D}^2),
\eeq
an optimal choice of $\tilde D$ would be $D_{\Psi}$ and $\sqrt{3V(x^*,x_1)/\alpha}$ for solving nonconvex and
convex SP problems, respectively. With this selection of $\tilde D$, the bounds in \eqnok{proj_stch_grad_cnvg_stch_m}
and \eqnok{proj_stch_grad_cnvg_stch_cvx_m}, respectively, reduce to
\beq \label{nocvx_stch_m}
\frac{\alpha^2}{L} \bbe[\|g_{_{X,R}}\|^2]
\le \frac{16 L D_{\Psi}^2}{\bar N}+ \frac{8 \sqrt 6 D_{\Psi} \sigma}{\sqrt{\bar N}}
\eeq
and
\beq \label{cvx_stch_m}
\bbe[\Psi(x^*)-\Psi(x_1)] \le \frac{4 L V(x^*,x_1)}{\alpha \bar N} +
\frac{2 \sqrt{2 V(x^*,x_1)} \sigma}{\sqrt{\alpha \bar N}}.
\eeq
Thirdly, the stepsize policy in Corollary~\ref{RSPG_m0} and the probability mass function \eqnok{prob_fun} together
with the number of samples \eqnok{def_m} at each iteration of the RSPG algorithm provide a unified strategy for solving
both convex and nonconvex SP problems. In particular, the RSPG algorithm exhibits a nearly optimal
rate of convergence for
solving smooth convex SP problems, since the second term in \eqnok{cvx_stch_m} is unimprovable (see e.g., \cite{nemyud:83}),
while the first term in \eqnok{cvx_stch_m} can be considerably improved~\cite{Lan10-3}.
%Moreover, while the first term in \eqnok{cvx_stch_m} can be considerably improved for convex problems, the second term
%in \eqnok{nocvx_stch_m} is shown in \cite{CarGouToi10-1} to be unimprovable under first-order information of $f$.

\subsection{A two-phase randomized stochastic  projected gradient method} \label{sec_2RSPG}
In the previous subsection, we present the expected complexity results over many runs of the RSPG algorithm.
Indeed, we are also interested in the performance of a single run of RSPG. In particular, we want to establish
the complexity results for finding an {\sl $(\epsilon, \Lambda)$-solution} of the problem \eqnok{NLP}, i.e.,
a point $x \in X$ satisfying $\Prob\{\|g_{_X}(x)\|^2 \le \epsilon\} \ge 1 - \Lambda$, for some $\epsilon >0$ and $\Lambda \in (0,1)$.
Noticing that by the Markov's inequality and \eqnok{proj_stch_grad_cnvg_stch_m}, we can directly have
\beq \label{nocvx_prob}
\Prob \left\{
\|g_{_{X,R}}\|^2 \ge  \frac{\lambda L {\cal B}_{\bar N}}{\alpha^2} \right\}
\le \frac{1}{\lambda}, \qquad \mbox{for any } \lambda > 0.
\eeq
This implies that the total number of calls to the ${\cal SFO}$ performed by the RSPG algorithm
for finding an {\sl $(\epsilon, \Lambda)$-solution}, after disregarding a few constant factors, can be bounded by
\beq \label{nocvx_prob1}
{\cal O} \left\{
\frac{1}{\Lambda \epsilon} +
\frac{\sigma^2}{\Lambda^2 \epsilon^2}\right\}.
\eeq
In this subsection, we present a approach to improve the dependence of the above bound on $\Lambda$.
More specifically, we propose a variant of the RSPG algorithm which has two phases: an optimization phase and
a post-optimization phase. The optimization phase consists of independent single runs of the RSPG algorithm to
generate a list of candidate solutions, and in the post-optimization phase, we choose a solution $x^*$ from these
candidate solutions generated by the optimization phase.
For the sake of simplicity, we assume throughout this subsection that the
norm $\|\cdot\|$ in $\bbr^n$ is the standard Euclidean norm.

\vskip 0.1cm

\noindent {\bf A two phase RSPG (2-RSPG) algorithm}
\begin{itemize}
\item [] {\bf Input:} Given initial point $x_1 \in X$, number of runs $S$,
total $\bar N$ of calls to the ${\cal SFO}$ in each run of the RSPG algorithm,
and sample size $T$ in the post-optimization phase.
\item [] {\bf Optimization phase:}
\item [] \hspace{0.1in} For $s = 1, \ldots, S$
\begin{itemize}
\item [] Call the RSPG algorithm with initial point $x_1$,
iteration limit $N=\lfloor \bar N/m \rfloor$ with $m$ given by \eqnok{def_m},
stepsizes $\gamma_k = \alpha/(2L)$ for $k=1,\ldots,N$,
batch sizes $m_k = m$, and
probability mass function $P_{R}$ in \eqnok{prob_fun}.
\end{itemize}
\item [] \hspace{0.1in} Let $\bar x_s = x_{R_s}$, $s=1,\ldots,S$, be the outputs of this phase.
\item [] {\bf Post-optimization phase:}
\item[] \hspace{0.1in} Choose a solution $\bar x^*$ from the candidate
list $\{\bar x_1, \ldots, \bar x_S\}$ such that
\beq \label{post_opt}
\|\bar g_{_X}(\bar x^*)\| = \min_{s = 1, \ldots, S} \|\bar g_{_X}(\bar x_s)\|, \quad
\bar g_{_X}(\bar x_s) := P_X(\bar x_s,\bar G_T(\bar x_s), \gamma_{R_s}),
\eeq
\hspace{0.1in} where $\bar G_T(x)=\frac{1}{T} \sum_{k=1}^T G(x, \xi_k)$ and $P_X(x,g,\gamma)$ is defined in \eqnok{proj_g}.
\item [] {\bf Output:} $\bar x^*$.
\end{itemize}

\vgap

In the 2-RSPG algorithm, the total number of calls of ${\cal SFO}$ in the optimization phase and post-optimization phase
is bounded by $S \times \bar N$ and $S \times T$, respectively. In the next theorem, we provide certain bounds of
$S$, $\bar N$ and $T$ for finding an {\sl $(\epsilon, \Lambda)$-solution} of problem \eqnok{NLP}.

We need the following well-known large deviation theorem of vector-valued martingales
to derive the large deviation results of the 2-RSPG algorithm
(see \cite{jn08-1} for a general result using possibly non-Euclidean norm).

\begin{lemma} \label{marting}
Assume that we are given a polish space with Borel probability measure $\mu$ and
a sequence of ${\cal F}_0 = \{\emptyset,\Omega\} \subseteq {\cal F}_1
\subseteq {\cal F}_2 \subseteq \ldots$ of $\sigma$-sub-algebras of Borel
$\sigma$-algebra of $\Omega$.
Let $\zeta_i \in \bbr^n$, $i = 1, \ldots, \infty$, be a martingale-difference sequence
of Borel functions on $\Omega$ such that $\zeta_i$ is ${\cal F}_i$  measurable and
$\bbe[\zeta_i| i-1] =0$, where $\bbe[\cdot|i]$, $i = 1, 2, \ldots$, denotes the
conditional expectation w.r.t. ${\cal F}_i$ and $\bbe \equiv \bbe[\cdot|0]$ is the expectation w.r.t.
$\mu$.
\begin{itemize}
\item [a)] If $\bbe[\|\zeta_i\|^2] \le \sigma_i^2$ for any $i \ge 1$, then
$
\bbe[\|\sum_{i=1}^N \zeta_i\|^2] \le \sum_{i=1}^N \sigma_i^2$. As a consequence, we have
\[
\forall N \ge 1, \lambda \ge 0: \Prob\left\{
\|\sum_{i=1}^N \zeta_i \|^2 \ge \lambda \sum_{i=1}^N \sigma_i^2
\right\} \le \frac{1}{\lambda};
\]
\item [b)] If $\bbe\left[\exp\left(
\|\zeta_i\|^2 /\sigma_i^{2}  \right)|i-1 \right] \le \exp(1)$ almost surely for any $i \ge 1$, then
\[
\forall N \ge 1, \lambda \ge 0: \Prob\left\{
\|\sum_{i=1}^N \zeta_i \| \ge \sqrt{2} (1+ \lambda) \sqrt{\sum_{i=1}^N \sigma_i^2}
\right\} \le \exp(-\lambda^2 /3).
\]
\end{itemize}
\end{lemma}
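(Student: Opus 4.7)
The plan for part (a) is to exploit the orthogonality of martingale differences in $L^2$. Since for $i>j$ the difference $\zeta_j$ is ${\cal F}_{i-1}$-measurable, the tower property gives
\[
\bbe[\langle \zeta_i, \zeta_j\rangle] = \bbe\bigl[\langle \bbe[\zeta_i \mid i-1], \zeta_j\rangle\bigr] = 0.
\]
Expanding $\|\sum_{i=1}^N \zeta_i\|^2 = \sum_i\|\zeta_i\|^2 + 2\sum_{i<j}\langle\zeta_i,\zeta_j\rangle$, taking expectations, and invoking the hypothesis $\bbe[\|\zeta_i\|^2]\le \sigma_i^2$ yield the stated second-moment bound. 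The large-deviation statement in (a) is then immediate from Markov's inequality applied to the nonnegative random variable $\|\sum_i\zeta_i\|^2$.

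For part (b), I would use a Chernoff-type argument applied to $\exp(\alpha\|S_N\|^2)$ with $S_N := \sum_{i=1}^N \zeta_i$. The core of the proof is a conditional moment-generating-function estimate of the form
\[
\bbe\bigl[\exp(\alpha\|S_k\|^2) \mid {\cal F}_{k-1}\bigr] \le \exp\!\bigl(c_1\alpha\sigma_k^2\bigr)\,\exp\!\bigl(\alpha'\|S_{k-1}\|^2\bigr),
\]
valid for $\alpha$ below a threshold proportional to $1/\sigma_k^2$, where $\alpha'$ is a slight inflation of $\alpha$. Iterating this recursion across $k=1,\ldots,N$ and taking expectations would give $\bbe[\exp(\alpha\|S_N\|^2)]\le \exp(c_2\alpha\Sigma)$ for $\Sigma := \sum_i\sigma_i^2$ and $\alpha$ at most of order $1/\Sigma$. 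A final Markov step $\Prob\{\|S_N\|^2\ge t\}\le \exp(c_2\alpha\Sigma - \alpha t)$ with $t = 2(1+\lambda)^2 \Sigma$ and $\alpha$ chosen to match the target constants should yield the advertised $\exp(-\lambda^2/3)$ tail.

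The main obstacle is establishing the one-step MGF bound, since expanding $\|S_k\|^2 = \|S_{k-1}\|^2 + 2\langle S_{k-1},\zeta_k\rangle + \|\zeta_k\|^2$ produces a cross term $2\alpha\langle S_{k-1},\zeta_k\rangle$ that sits inside the exponential and cannot simply be eliminated by the mean-zero property of $\zeta_k$ as in the scalar Azuma-Hoeffding setting. The remedy is Young's inequality $2\alpha\langle v,z\rangle \le (\alpha/\beta)\|v\|^2 + \alpha\beta\|z\|^2$ applied with $v=S_{k-1}$ and $z=\zeta_k$, which transforms the cross term into a controlled increment of the coefficient on $\|S_{k-1}\|^2$ (contributing to $\alpha'$) and an extra $\alpha\beta\|\zeta_k\|^2$ term. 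The latter together with the original $\alpha\|\zeta_k\|^2$ can be bounded in conditional expectation by invoking the sub-Gaussian hypothesis $\bbe[\exp(\|\zeta_k\|^2/\sigma_k^2)\mid k-1]\le e$, provided $\alpha(1+\beta)\sigma_k^2 \le 1$. Balancing $\beta$ and tracking constants through the iteration is the delicate bookkeeping that ultimately fixes the universal constants $\sqrt{2}$ and $1/3$ in the statement.
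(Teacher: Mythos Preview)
The paper does not prove this lemma. It is stated as a ``well-known large deviation theorem of vector-valued martingales'' and the reader is referred to \cite{jn08-1} for a proof in a more general (possibly non-Euclidean) setting. So there is no in-paper argument to compare against.

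As for your proposal itself: part (a) is correct and is exactly the standard $L^2$-orthogonality argument; the paper in fact uses the identical computation earlier, around \eqnok{dec_sigma0}, when bounding $\bbe[\|\delta_k\|^2]$. For part (b), the Chernoff/MGF route you describe is indeed the way the result is established in the cited literature, and your identification of the cross term $2\alpha\langle S_{k-1},\zeta_k\rangle$ as the main difficulty, together with the Young-inequality decoupling, is on target. What remains is purely bookkeeping: one must check that the inflation $\alpha \mapsto \alpha'$ over $N$ steps stays bounded (so that the final $\alpha$ can still be taken of order $1/\Sigma$), and that the accumulated constants actually deliver $\sqrt{2}$ and $1/3$. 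You have not done that arithmetic, so the sketch is plausible but not yet a proof; since the paper does not carry out this computation either, you are not behind it.
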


We are now ready to state the main convergence properties for the $2$-RSPG algorithm.
\begin{theorem} \label{2RSPG_theorem}
Under Assumption A1,  the following statements hold for the $2$-RSPG algorithm applied
to problem \eqnok{NLP}.
\begin{itemize}
\item [(a)] Let ${\cal B}_{\bar N}$ be defined in \eqnok{proj_stch_grad_cnvg_stch_m}. Then, for all $\lambda >0$
\beq \label{2RSG_conv1}
\Prob \left\{
\|g_{_X}(\bar x^*)\|^2 \ge \frac{2}{\alpha^2} \left(4 L {\cal B}_{\bar N} + \frac{3 \lambda \sigma^2}{T} \right)
\right\} \le \frac{S}{\lambda} + 2^{-S};
\eeq
\item [(b)] Let $\epsilon > 0$ and $\Lambda \in (0,1)$ be given. If the parameters $(S,\bar N,T)$ are set to
\beqa
& & \hspace{0.3in} S(\Lambda) := \left \lceil \log_2 (2/ \Lambda) \right \rceil, \label{def_S}\\
& & \hspace{0.3in} \bar N (\epsilon) := \left \lceil \max \left\{ \frac{512 L^2 D_{\Psi}^2}{\alpha^2 \epsilon},
\left[ \left( \tilde D + \frac{D_{\Psi}^2}{\tilde D}\right) \frac{ 128 \sqrt{6} L\sigma}{\alpha^2 \epsilon} \right]^2,
\frac{3 \sigma^2}{8 L^2 \tilde{D}^2 } \right\} \right \rceil, \label{def_N} \\
& & \hspace{0.3in} T(\epsilon,\Lambda) := \left \lceil \frac{24 S(\Lambda) \sigma^2}{\alpha^2 \Lambda \epsilon} \right \rceil,\label{def_Tbar}
\eeqa
then the $2$-RSPG algorithm computes an $(\epsilon, \Lambda)$-solution of the problem \eqnok{NLP}
after taking at most
\beq \label{bnd_compl}
S(\Lambda) \, \left[ \bar N(\epsilon) + T(\epsilon, \Lambda)\right]
\eeq
calls of the stochastic first order oracle.
\end{itemize}
\end{theorem}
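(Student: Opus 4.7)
The plan is to establish part (a) by decomposing $\|g_{_X}(\bar x^*)\|$ into an ``optimization error'' piece and a ``post-optimization sampling noise'' piece, and then to control each piece by an independent probabilistic device. Part (b) will then be obtained by plugging the prescribed $(S,\bar N,T)$ into (a).

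For the decomposition, I would apply Proposition~\ref{lip_proj_grad} twice. First, at the selected point,
\[
\|g_{_X}(\bar x^*)\|^2 \le 2\|\bar g_{_X}(\bar x^*)\|^2 + \frac{2}{\alpha^2}\|\nabla f(\bar x^*) - \bar G_T(\bar x^*)\|^2.
\]
Then, using that $\bar x^*$ minimizes $\|\bar g_{_X}(\bar x_s)\|$ over the candidate list and applying the same Lipschitz bound to each $\bar x_s$,
\[
\|\bar g_{_X}(\bar x^*)\|^2 \le \min_{s} \|\bar g_{_X}(\bar x_s)\|^2 \le 2\min_s\|g_{_X}(\bar x_s)\|^2 + \frac{2}{\alpha^2}\max_s\|\nabla f(\bar x_s) - \bar G_T(\bar x_s)\|^2.
\]
Combining, and absorbing the $\bar x^*$ term into the max since $\bar x^* \in \{\bar x_1,\ldots,\bar x_S\}$, gives
\[
\|g_{_X}(\bar x^*)\|^2 \le 4\min_s\|g_{_X}(\bar x_s)\|^2 + \frac{6}{\alpha^2}\max_s\|\nabla f(\bar x_s) - \bar G_T(\bar x_s)\|^2.
\]
The two summands are now handled by separate probabilistic tools. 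The $S$ candidates are produced by independent runs of RSPG, and Corollary~\ref{RSPG_m} gives $\bbe[\|g_{_X}(\bar x_s)\|^2] \le L{\cal B}_{\bar N}/\alpha^2$. Markov yields $\Prob\{\|g_{_X}(\bar x_s)\|^2 \ge 2L{\cal B}_{\bar N}/\alpha^2\} \le 1/2$ per run, and independence gives $\Prob\{\min_s\|g_{_X}(\bar x_s)\|^2 \ge 2L{\cal B}_{\bar N}/\alpha^2\} \le 2^{-S}$, so $4\min_s\|g_{_X}(\bar x_s)\|^2 \le 8L{\cal B}_{\bar N}/\alpha^2$ off an exponentially small set. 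For the noise term, conditional on $\bar x_s$ the $T$ samples used for $\bar G_T(\bar x_s)$ are i.i.d.\ with mean $\nabla f(\bar x_s)$ and variance at most $\sigma^2$, so Lemma~\ref{marting}(a) gives $\bbe[\|\bar G_T(\bar x_s) - \nabla f(\bar x_s)\|^2]\le \sigma^2/T$; Markov and a union bound over $s$ then yield $\Prob\{\max_s\|\bar G_T(\bar x_s)-\nabla f(\bar x_s)\|^2 \ge \lambda\sigma^2/T\} \le S/\lambda$. A final union bound combines the two estimates to give \eqnok{2RSG_conv1}.

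Part (b) is then arithmetic. Choosing $\lambda = 2S(\Lambda)/\Lambda$ in (a) makes the failure probability $S(\Lambda)/\lambda + 2^{-S(\Lambda)} \le \Lambda/2 + \Lambda/2 = \Lambda$, the second inequality holding because $S(\Lambda)=\lceil\log_2(2/\Lambda)\rceil$. To force the right-hand side of \eqnok{2RSG_conv1} below $\epsilon$, I enforce $4L{\cal B}_{\bar N} \le \alpha^2\epsilon/4$ and $3\lambda\sigma^2/T \le \alpha^2\epsilon/4$ separately; the second gives exactly the choice $T(\epsilon,\Lambda)$ in \eqnok{def_Tbar}, while the third branch in \eqnok{def_N} places $\bar N$ in the large-$\bar N$ regime \eqnok{large-barN} so that the simplified form ${\cal B}_{\bar N} = 16LD_\Psi^2/\bar N + (4\sqrt 6\sigma/\sqrt{\bar N})(D_\Psi^2/\tilde D + \tilde D)$ applies; demanding each of these two summands be at most $\alpha^2\epsilon/(32L)$ reproduces the first two branches in the max defining $\bar N(\epsilon)$. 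The SFO-call count $S(\bar N + T)$ is immediate from the structure of the two phases. The main obstacle is the decomposition step: one must cleanly separate the randomness over the $S$ independent optimization runs (exploited via the min) from the randomness of the post-optimization samples (exploited via the union bound and the martingale inequality), and then rejoin them using the Lipschitz property of the generalized projected gradient — once this is done, the probabilistic estimates and the parameter tuning in (b) are essentially mechanical.
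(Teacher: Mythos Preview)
Your proposal is correct and follows essentially the same approach as the paper: the same decomposition via Proposition~\ref{lip_proj_grad} into $4\min_s\|g_{_X}(\bar x_s)\|^2$ plus $6/\alpha^2$ times the maximal post-optimization noise, the same use of independence and Markov (with $\lambda=2$) to get the $2^{-S}$ term, the same application of Lemma~\ref{marting}(a) plus a union bound for the $S/\lambda$ term, and the same arithmetic in part~(b) with $\lambda=2S(\Lambda)/\Lambda$. The only cosmetic difference is that you apply Proposition~\ref{lip_proj_grad} before taking the $\max_s$, whereas the paper first writes the bound in terms of $\|\bar g_{_X}(\bar x_s)-g_{_X}(\bar x_s)\|$ and then invokes the proposition; the content is identical.
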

\begin{proof}
We first show part (a). Let $g_{_X}(\bar x_s) = P_X(\bar x_s,\nabla f(\bar x_s), \gamma_{R_s})$. Then, it
follows from the definition of $\bar x^*$ in \eqnok{post_opt} that
\beqas
 \|\bar g_{_X}(\bar x^*)\|^2 &=& \min_{s=1,\ldots,S} \|\bar g_{_X}(\bar x_s)\|^2
= \min_{s=1,\ldots,S} \| g_{_X}(\bar x_s) + \bar g_{_X}(\bar x_s) - g_{_X}(\bar x_s)\|^2 \\
& \le & \min_{s=1, \ldots,S} \left\{2 \|g_{_X}(\bar x_s)\|^2 + 2
\|\bar g_{_X}(\bar x_s) - g_{_X}(\bar x_s) \|^2 \right\} \\
 &\le& 2 \min_{s=1, \ldots,S} \|g_{_X}(\bar x_s)\|^2 +
2 \max_{s=1,\ldots,S} \|\bar g_{_X}(\bar x_s) - g_{_X}(\bar x_s) \|^2,
\eeqas
which implies that
\beqa
\|g_{_X}(\bar x^*)\|^2 &\le& 2 \|\bar g_{_X}(\bar x^*)\|^2 + 2 \|g_{_X}(\bar x^*) - \bar g_{_X}(\bar x^*)\|^2 \nn \\
&\le& 4 \min_{s=1, \ldots,S} \|g_{_X}(\bar x_s)\|^2 + 4 \max_{s=1,\ldots,S} \|\bar g_{_X}(\bar x_s) - g_{_X}(\bar x_s) \|^2
+ 2 \|g_{_X}(\bar x^*) - \bar g_{_X}(\bar x^*)\|^2  \nn \\
&\le& 4 \min_{s=1, \ldots,S} \|g_{_X}(\bar x_s)\|^2 + 6 \max_{s=1,\ldots,S} \|\bar g_{_X}(\bar x_s) - g_{_X}(\bar x_s) \|^2.
 \label{rel_post}
\eeqa
We now provide certain probabilistic bounds to the two terms in the
right hand side of the above inequality. Firstly, from the fact
that $\bar x_s$, $1\le s \le S$, are independent and \eqnok{nocvx_prob} (with $\lambda = 2$), we have
\beq \label{opt_phase_result}
\hspace{0.3in}  \Prob \left\{ \min_{s \in [1,S]} \|g_{_X}(\bar x_s)\|^2
\ge \frac{2 L {\cal B}_{\bar N}}{\alpha^2} \right\}
  =  \prod_{s=1}^S \Prob\left\{\|g_{_X}(\bar x_s)\|^2 \ge
\frac{2 L {\cal B}_{\bar N}}{\alpha^2} \right\} \le 2^{-S}.
\eeq
Moreover, denoting $\delta_{s,k} = G(\bar x_s, \xi_k) - \nabla f(\bar x_s)$, $k = 1, \ldots, T$, by
Proposition~\ref{lip_proj_grad} with $x=\bar x_s, \gamma=\gamma_{R_{s}}, g_1 =\bar G_T(\bar x_s),
g_2 =\nabla f(\bar x_s)$, we have
\beq \label{error-bar}
\|\bar g_{_X}(\bar x_s) - g_{_X}(\bar x_s)\| \le \frac{1}{\alpha} \|\sum_{k=1}^T
 \delta_{s,k} / T\|.
\eeq
From the above inequality, Assumption A1 and Lemma~\ref{marting}.a),
for any $\lambda > 0$ and any $s = 1, \ldots, S$, we have
\[
\Prob\left\{ \| \bar g_{_X}(\bar x_s) - g_{_X}(\bar x_s)\|^2 \ge  \frac{\lambda \sigma^2}{\alpha^2 T} \right\}
 \le \Prob\left\{ \|\sum_{k=1}^T \delta_{s,k} \|^2 \ge  \lambda T \sigma^2 \right\}
\le \frac{1}{\lambda},
\]
which implies
\beq \label{closeness1}
\Prob \left\{ \max_{s=1,\ldots,S} \|\bar g_{_X}(\bar x_s) - g_{_X}(\bar x_s)\|^2 \ge \frac{\lambda \sigma^2}{\alpha^2T}
 \right\} \le \frac{S}{\lambda}.
\eeq
Then, the conclusion \eqnok{2RSG_conv1} follows from \eqnok{rel_post}, \eqnok{opt_phase_result} and
\eqnok{closeness1}.

We now show part (b). With the settings in part (b), it is easy to count the total number of calls of the ${\cal SFO}$
in the 2-RSPG algorithm is bounded up by \eqnok{bnd_compl}. Hence, we only need to show that the
$\bar x^*$ returned by the 2-RSPG algorithm is indeed an $(\epsilon, \Lambda)$-solution of the problem \eqnok{NLP}.
With the choice of $\bar{N}(\epsilon)$ in \eqnok{def_N}, we can see that \eqnok{large-barN} holds.
So, we have from \eqnok{proj_stch_grad_cnvg_stch_m} and \eqnok{def_N} that
\[
{\cal B}_{\bar N(\epsilon)} = \frac{16 L D_{\Psi}^2}{\bar N(\epsilon)} +
\frac{4 \sqrt{6} \sigma}{\sqrt{\bar N(\epsilon)}} \left( \tilde D + \frac{D_{\Psi}^2}{\tilde D}\right)
\le \frac{\alpha^2 \epsilon}{32L} + \frac{\alpha^2 \epsilon}{32L} = \frac{\alpha^2 \epsilon}{16L}.
\]
By the above inequality and \eqnok{def_Tbar}, setting $\lambda = 2 S/\Lambda$ in \eqnok{2RSG_conv1},
we have
\[
\frac{8 L {\cal B}_{\bar N(\epsilon)}}{\alpha^2} + \frac{6 \lambda \sigma^2}{\alpha^2 T(\epsilon, \Lambda)}
\le \frac{\epsilon}{2} + \frac{\lambda \Lambda \epsilon}{4 S} = \epsilon,
\]
which together with  \eqnok{2RSG_conv1}, \eqnok{def_S}
and $\lambda= 2 S/\Lambda$ imply
\[
\Prob\left\{\|g_{_X}(\bar x^*)\|^2  \ge \epsilon \right\} \le \frac{\Lambda}{2} +
2^{-S} \le \Lambda.
\]
Hence,$\bar x^*$ is an $(\epsilon, \Lambda)$-solution of the problem \eqnok{NLP}.
\end{proof}

\vgap

Now, it is interesting to compare the complexity bound in \eqnok{bnd_compl} with the one in
\eqnok{nocvx_prob1}. In view of \eqnok{def_S}, \eqnok{def_N} and \eqnok{def_Tbar},
the complexity bound in \eqnok{bnd_compl} for finding an  $(\epsilon, \Lambda)$-solution,
after discarding a few constant factors, is equivalent to
\beq \label{improved_compl}
\hspace{0.1in} {\cal O} \left\{ \frac{1}{\epsilon}
\log_2 \frac{1}{\Lambda} +
\frac{ \sigma^2}{\epsilon^2 } \log_2 \frac{1}{\Lambda}
+ \frac{ \sigma^2 }{\Lambda \epsilon} \log_2^2\frac{1}{\Lambda}
\right\}.
\eeq
When the second terms are the dominating terms in both bounds, the above bound \eqnok{improved_compl} can be
considerably smaller than the one in \eqnok{nocvx_prob1} up to a factor of
$
1 / \left[\Lambda^2 \log_2(1/\Lambda)  \right].
$

\vgap

The following theorem shows that under a certain ``light-tail" assumption:

{\bf A2:} For any $x_k \in X$, we have
\beq \label{A2}
 \bbe [\exp \{ \|G(x_k, \xi_k) - \nabla f(x)\|^2 /\sigma^2 \}] \le \exp \{1\},
\eeq
the bound \eqnok{bnd_compl} in Theorem~\ref{2RSPG_theorem} can be further improved.
\begin{corollary}
Under Assumptions A1 and A2, the following statements hold for the $2$-RSPG algorithm
applied to problem \eqnok{NLP}.
\begin{itemize}
\item [(a)] Let ${\cal B}_{\bar N}$ is defined in \eqnok{proj_stch_grad_cnvg_stch_m}. Then, for all $\lambda >0$
\beq \label{2RSPG_conv1_1}
\hspace{0.3in} \Prob\left\{
\|g_{_X}(\bar x^*)\|^2 \ge \left[\frac{8 L {\cal B}_{\bar N}}{\alpha^2} +
\frac{12 (1 + \lambda)^2 \sigma^2}{T \alpha^2} \right] \right\} \le S \exp(-\frac{\lambda^2}{3}) + 2^{-S};
\eeq
\item [(b)] Let $\epsilon > 0$ and $\Lambda \in (0,1)$ be given.
If $S$ and $\bar N$ are set to $S(\Lambda)$ and $\bar N(\epsilon)$ as in
\eqnok{def_S} and \eqnok{def_N}, respectively, and
the sample size $T$ is set to
\beq \label{def_Tbar1}
T'(\epsilon, \Lambda):= \frac{24 \sigma^2}{\alpha^2 \epsilon}\left[1 + \left(3 \log_2 \frac{2 S(\Lambda)}
{\Lambda} \right)^\frac{1}{2} \right]^2,
\eeq
then the $2$-RSPG algorithm can compute an $(\epsilon, \Lambda)$-solution of the problem \eqnok{NLP}
after taking at most
\beq \label{bnd_comp2}
S(\Lambda) \left[ \bar N(\epsilon) + T'(\epsilon, \Lambda)\right]
\eeq
calls to the stochastic first-order oracle.
\end{itemize}
\end{corollary}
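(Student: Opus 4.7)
The plan is to mirror the proof of Theorem~\ref{2RSPG_theorem} but replace the Chebyshev-style bound coming from Lemma~\ref{marting}.a) with the sub-Gaussian bound of Lemma~\ref{marting}.b), exploiting Assumption A2. The structural inequality \eqnok{rel_post},
\[
 \|g_{_X}(\bar x^*)\|^2 \le 4 \min_{s=1,\ldots,S} \|g_{_X}(\bar x_s)\|^2 + 6 \max_{s=1,\ldots,S} \|\bar g_{_X}(\bar x_s) - g_{_X}(\bar x_s)\|^2,
\]
still holds and requires no modification, so the whole task reduces to re-bounding the second (``post-optimization estimation'') term.

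For part~(a), I would first recycle \eqnok{opt_phase_result} verbatim: independence of the $S$ runs of the optimization phase together with the Markov-type bound \eqnok{nocvx_prob} (applied with $\lambda=2$) gives $\Prob\{\min_s \|g_{_X}(\bar x_s)\|^2 \ge 2L{\cal B}_{\bar N}/\alpha^2\}\le 2^{-S}$, unchanged by the light-tail assumption. Next, I would re-establish \eqnok{error-bar} exactly as before, so that
\[
\|\bar g_{_X}(\bar x_s) - g_{_X}(\bar x_s)\| \le \frac{1}{\alpha T} \Big\| \sum_{k=1}^{T} \delta_{s,k}\Big\|,
\]
and observe that under A2 the $\delta_{s,k}$ form a martingale-difference sequence satisfying the exponential moment condition of Lemma~\ref{marting}.b) with constant $\sigma^2$. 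Applying that lemma and then squaring yields, for any $\lambda \ge 0$,
\[
\Prob\Bigl\{ \|\bar g_{_X}(\bar x_s) - g_{_X}(\bar x_s)\|^2 \ge \frac{2(1+\lambda)^2 \sigma^2}{\alpha^2 T}\Bigr\} \le \exp(-\lambda^2/3).
\]
A union bound over $s=1,\ldots,S$ followed by combining with \eqnok{opt_phase_result} through \eqnok{rel_post} produces \eqnok{2RSPG_conv1_1} (the factor $12$ arises from $6\times 2$).

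For part~(b), the argument proceeds exactly as in the proof of Theorem~\ref{2RSPG_theorem}.(b). The choice $\bar N(\epsilon)$ in \eqnok{def_N} already guarantees both the ``large-$\bar N$'' regime \eqnok{large-barN} and the bound ${\cal B}_{\bar N(\epsilon)} \le \alpha^2\epsilon/(16L)$, so $8L{\cal B}_{\bar N(\epsilon)}/\alpha^2 \le \epsilon/2$. It then remains to pick $\lambda$ so that $S(\Lambda)\exp(-\lambda^2/3) \le \Lambda/2$; the natural choice is $\lambda = \sqrt{3\log_2(2S(\Lambda)/\Lambda)}$ (using $\log \le \log_2$ harmlessly for a cleaner constant). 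Substituting this $\lambda$ into the bound $12(1+\lambda)^2 \sigma^2/(\alpha^2 T)$ and demanding it be at most $\epsilon/2$ gives exactly the sample-size prescription $T'(\epsilon,\Lambda)$ in \eqnok{def_Tbar1}. Combining the two halves yields $\Prob\{\|g_{_X}(\bar x^*)\|^2 \ge \epsilon\} \le \Lambda/2 + 2^{-S(\Lambda)} \le \Lambda$ by \eqnok{def_S}, and the total oracle count $S(\Lambda)[\bar N(\epsilon) + T'(\epsilon,\Lambda)]$ follows directly from the algorithm's structure.

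The main obstacle is purely bookkeeping: matching the constants from Lemma~\ref{marting}.b) (which produces $\sqrt{2}(1+\lambda)$ on the norm and hence $2(1+\lambda)^2$ on the squared norm) with the factor $6$ from \eqnok{rel_post} to arrive at the advertised $12(1+\lambda)^2$, and then inverting the tail level $S\exp(-\lambda^2/3) = \Lambda/2$ cleanly so that the resulting $T'(\epsilon,\Lambda)$ takes the precise form in \eqnok{def_Tbar1}. No new probabilistic content beyond A2 and Lemma~\ref{marting} is needed; the optimization-phase bound and the independence argument carry over untouched.
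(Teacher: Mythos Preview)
Your proposal is correct and follows essentially the same approach as the paper's own proof: reuse \eqnok{rel_post} and \eqnok{opt_phase_result} unchanged, apply Proposition~\ref{lip_proj_grad} to get \eqnok{error-bar}, replace Lemma~\ref{marting}.a) by Lemma~\ref{marting}.b) under Assumption~A2, take a union bound over $s$, and combine. Your handling of part~(b)---bounding ${\cal B}_{\bar N(\epsilon)}$ via \eqnok{def_N}, choosing $\lambda = \sqrt{3\log_2(2S(\Lambda)/\Lambda)}$, and solving for $T'$---matches the paper's ``similar arguments'' exactly, including the use of $\log \le \log_2$ to absorb the slack.
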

\begin{proof}
We only give a sketch of the proof for part (a). The proof of part (b) follows from part (a) and
similar arguments for proving (b) part of Theorem~\ref{2RSPG_theorem}.
Now, denoting $\delta_{s,k} = G(\bar x_s, \xi_k) - \nabla f(\bar x_s)$, $k = 1, \ldots, T$,
again by Proposition~\ref{lip_proj_grad}, we have \eqnok{error-bar} holds.
Then, by Assumption A2 and Lemma~\ref{marting}.b), for any $\lambda > 0$ and any $s = 1, \ldots, S$, we have
\beqa
 && \Prob\left\{ \|\bar g_{_X}(\bar x_s) - g_{_X}(\bar x_s)\|^2 \ge (1+\lambda)^2 \frac{2\sigma^2}{ \alpha^2 T} \right\}
 \nn \\
& \le & \Prob\left\{ \|\sum_{k=1}^T \delta_{s,k} \| \ge  \sqrt{2T} (1+\lambda) \sigma \right\}
\le \exp(-\frac{\lambda^2}{3}), \nn
\eeqa
which implies that  for any $\lambda > 0$
\beq \label{closeness1_p}
\Prob\left\{
\max_{s=1,\ldots,S} \|\bar g_{_X}(\bar x_s) - g_{_X}(\bar x_s)\|^2 \ge (1+\lambda)^2 \frac{2\sigma^2}{\alpha^2 T}
 \right\} \le S \exp(-\frac{\lambda^2}{3}),
\eeq
Then, the conclusion \eqnok{2RSPG_conv1_1} follows from \eqnok{rel_post}, \eqnok{opt_phase_result} and
\eqnok{closeness1_p}.
\end{proof}

\vgap

In view of \eqnok{def_S}, \eqnok{def_N} and \eqnok{def_Tbar1}, the bound in \eqnok{bnd_comp2},
after discarding a few constant factors, is equivalent to
\beq \label{improved_compl1}
\hspace{0.1in} {\cal O} \left\{ \frac{1}{\epsilon}
\log_2 \frac{1}{\Lambda} + \frac{ \sigma^2}{\epsilon^2 } \log_2 \frac{1}{\Lambda}
+ \frac{\sigma^2 }{\epsilon} \log_2^2\frac{1}{\Lambda}
\right\}.
\eeq
Clearly, the third term of the above bound is smaller than the third term in \eqnok{improved_compl}
by a factor of $1/\Lambda$.
%
%%%%%%%%%%%%%%%%%%%%%%%%%%%%%%%%%%%%%%%%%%%%
\section{Stochastic zeroth-order methods}
\label{zeroth-order sec}
%%%%%%%%%%%%%%%%%%%%%%%%%%%%%%%%%%%%%%%%%%%%
%
In this section, we discuss how to specialize the RSPG algorithm to deal with the situations where only noisy function values
of the problem \eqnok{NLP} are available. More specifically, we assume that we can only access the noisy
zeroth-order information  of $f$ by a {\sl stochastic zeroth-order oracle} (${\cal SZO}$).
For any input $x_k$ and $\xi_k$, the ${\cal SZO}$
would output a quantity $F(x_k, \xi_k)$, where $x_k$ is the $k$-th iterate of our algorithm and $\xi_k$ is a random
variable whose distribution is supported on $\Xi \in \bbr^d$ (noting that $\Xi$ does not depend on $x_k$).
Throughout this section, we assume $F(x_k, \xi_k)$
is an unbiased estimator of $f(x_k)$, that is

{\bf A3:} For any $k \ge 1$, we have
\beqa
\bbe[F(x_k, \xi_k)] = f(x_k). \label{ass3.a}
\eeqa
We are going to apply the randomized smoothing techniques developed by \cite{DuBaMaWa11,Nest11-1}
to explore the zeroth-order information of $f$.
Hence, throughout this section, we also assume
$F(\cdot, \xi_k) \in  {\cal C}_L^{1,1}(\bbr^n)$ almost surely with respect to $\xi_k \in \Xi$, which together
with Assumption A3 imply $f\in  {\cal C}_L^{1,1}(\bbr^n)$.
Also, throughout this section, we assume that $\|\cdot\|$ is the standard Euclidean norm.

Suppose $v$ is a random vector in $\bbr^n$ with density function $\rho$, a smooth approximation of $f$ is defined as
\beq \label{rand_smooth_func-gen}
f_{\mu}(x) = \int f(x+\mu v)\rho(v) dv,
\eeq
where $\mu>0$ is the smoothing parameter. For different choices of smoothing distribution, the smoothed function
$f_{\mu}$ would have different properties. In this section, we only consider the Gaussian smoothing distribution.
That is  we assume that $v$ is a $n$-dimensional standard Gaussian random vector and
\beq \label{rand_smooth_func}
f_{\mu}(x) = \frac{1}{(2 \pi)^{\frac{n}{2}}} \int f(x+\mu v) e^{-\frac{1}{2}\|v\|^2} \,dv =\bbe_v[f(x+\mu v)].
\eeq
Nesterov \cite{Nest11-1} showed that the Gaussian smoothing approximation and $f_{\mu}$
have the following nice properties.
\begin{lemma} \label{smooth_f}
If $f\in  {\cal C}_L^{1,1}(\bbr^n)$, then
\begin{itemize}
\item [a)] $f_{\mu}$ is also Lipschitz continuously differentiable with gradient Lipschitz constant $L_{\mu} \le L$
and
\beq
\nabla f_{\mu}(x) = \frac{1}{(2 \pi)^{\frac{n}{2}}} \int  \frac{f (x + \mu v) - f(x)}{\mu} v
 e^{-\frac{1}{2}\|v\|^2} \,dv. \label{grad_fmu}
\eeq
\item [b)] for any $x \in \bbr^n$, we have
\beqa
& & \hspace{0.2in} |f_\mu(x) - f(x)| \le \frac{\mu^2}{2} L n, \label{closef} \\
& & \hspace{0.2in} \|\nabla f_{\mu}(x) - \nabla f(x)\| \le \frac{\mu}{2}L (n+3)^{\frac{3}{2}}, \label{grad_smth_close} \\
& & \hspace{0.2in} \bbe_v \left[ \left\| \frac{f (x + \mu v) - f(x)}{\mu} v \right\|^2 \right]
\le  2(n+4) \|\nabla f(x)\|^2 +
\frac{\mu^2}{2} L^2 (n+6)^3.\label{bound_grad_mu}
\eeqa
\item [c)]  $f_{\mu}$ is also convex provided $f$ is convex.
\end{itemize}
\end{lemma}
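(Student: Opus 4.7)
The plan is to treat the three items in order, all exploiting the representation $f_\mu(x)=\bbe_v[f(x+\mu v)]$ for standard Gaussian $v$ together with the integration-by-parts identity $\nabla_v e^{-\|v\|^2/2}=-v\,e^{-\|v\|^2/2}$ for the Gaussian density. For part (a), I would first differentiate \eqnok{rand_smooth_func} under the integral sign to obtain $\nabla f_\mu(x)=\bbe_v[\nabla f(x+\mu v)]$, from which the Lipschitz bound $L_\mu\le L$ follows immediately by Jensen and the $L$-Lipschitz property of $\nabla f$. To arrive at the explicit formula \eqnok{grad_fmu}, I would rewrite $\mu\nabla f(x+\mu v)=\nabla_v f(x+\mu v)$ inside the integral and apply integration by parts using the identity above, producing the factor $v$; subtracting the zero quantity $\mu^{-1}f(x)\bbe_v[v]=0$ casts the answer into the first-difference form stated.

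For part (b), each of the three bounds is obtained by inserting the smoothness estimate \eqnok{smooth} (or its Taylor-remainder variant) inside the expectation and then evaluating low-order Gaussian moments. For \eqnok{closef}, I would take the expectation of $|f(x+\mu v)-f(x)-\langle \nabla f(x),\mu v\rangle|\le \tfrac{L\mu^2}{2}\|v\|^2$; the linear term vanishes and $\bbe\|v\|^2=n$ closes the estimate. For \eqnok{grad_smth_close}, I would use $\bbe_v[vv^\top]=I$ to write $\nabla f(x)=\bbe_v[\langle \nabla f(x),v\rangle v]$, so that $\nabla f_\mu(x)-\nabla f(x)$ may be expressed via \eqnok{grad_fmu} as an expectation of the Taylor-remainder kernel $\mu^{-1}(f(x+\mu v)-f(x)-\mu\langle \nabla f(x),v\rangle)\,v$; bounding the scalar factor by $\tfrac{L\mu}{2}\|v\|^2$ reduces matters to the moment $\bbe\|v\|^3$. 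For \eqnok{bound_grad_mu}, the quadratic Young-type bound $(f(x+\mu v)-f(x))^2\le 2\mu^2\langle \nabla f(x),v\rangle^2+\tfrac{L^2\mu^4}{2}\|v\|^4$, divided by $\mu^2$ and multiplied by $\|v\|^2$ inside the expectation, reduces matters to the two moments $\bbe[\langle \nabla f(x),v\rangle^2\|v\|^2]$ and $\bbe\|v\|^6$.

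Part (c) is immediate: for each fixed $v$, the map $x\mapsto f(x+\mu v)$ inherits convexity from $f$, and an expectation of convex functions is convex.

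The main technical obstacle is the suite of Gaussian moment estimates that feed the three bounds in part (b): $\bbe\|v\|^3\le(n+3)^{3/2}$ (obtainable by Cauchy--Schwarz from $\bbe\|v\|^2=n$ and $\bbe\|v\|^4=n(n+2)$), $\bbe[\langle g,v\rangle^2\|v\|^2]\le(n+4)\|g\|^2$ (by rotational invariance, reducing to the Isserlis calculation $\bbe[v_1^2(v_1^2+\cdots+v_n^2)]=n+2$), and $\bbe\|v\|^6=n(n+2)(n+4)\le(n+6)^3$ (from the standard chi-square moment formula). None is conceptually deep, but massaging each into the clean $(n+c)^{k/2}$ form stated in the lemma requires careful bookkeeping; this is where I expect most of the effort to go, while the rest of the proof is a mechanical interplay between the Gaussian integration-by-parts identity and the smoothness inequality \eqnok{smooth}.
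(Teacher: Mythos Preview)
Your proposal is correct and is essentially the argument given by Nesterov in \cite{Nest11-1}, which is exactly what the paper does: it does not prove Lemma~\ref{smooth_f} at all but simply attributes it to \cite{Nest11-1}. So there is nothing in the paper's own text to compare against beyond the citation; your sketch reconstructs the cited proof faithfully, including the Gaussian integration-by-parts identity for \eqnok{grad_fmu}, the Taylor-remainder-plus-moment strategy for each of \eqnok{closef}--\eqnok{bound_grad_mu}, and the trivial convexity preservation in part (c).
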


\vgap

In the following, let us define the approximated stochastic gradient of $f$ at $x_k$ as
\beq \label{grad_free_def}
G_{\mu}(x_k, \xi_k, v)=\frac{F(x_k+\mu v, \xi_k)-F(x_k,\xi_k)}{\mu}v,
\eeq
and define $G(x_k, \xi_k) = \nabla_x F(x_k, \xi_k)$. We assume the Assumption 1 holds for $G(x_k, \xi_k)$.
Then, by the Assumption A3 and Lemma~\ref{smooth_f}.a), we directly get
\beq\label{expect_Gmu}
  \bbe_{v,\xi_k} [G_{\mu}(x_k, \xi_k, v)] = \nabla f_{\mu}(x_k),
\eeq
where the expectation is taken with respect to $v$ and $\xi_k$.

Now based on the RSPG algorithm, we state an algorithm which only uses zeroth-order information
to solve problem \eqnok{NLP}.
\vskip 0.1cm

\noindent {\bf A randomized stochastic projected gradient free (RSPGF) algorithm}
\begin{itemize}
\item [] {\bf Input:} Given initial point $x_1 \in X$,
iteration limit $N$, the stepsizes $\{\gamma_k\}$ with $\gamma_k >0$, $k \ge 1$,
the batch sizes $\{m_k\}$ with $m_k > 0$, $k \ge 1$,
and the probability mass function $P_R$ supported on  $\{1,\ldots, N\}$.
\item [] {\bf Step } $0$. Let $R$ be a random variable with probability mass function $P_{R}$.
\item [] {\bf Step } $k=1, \ldots, R-1$. Call the ${\cal SZO}$ $m_k$ times to obtain $G_{\mu}(x_k, \xi_{k,i}, v_{k,i})$,
$i = 1, \ldots, m_k$, set
\beq \label{def_Gkmu}
G_{\mu,k} = \frac{1}{m_k} \sum_{i=1}^{m_k} G_{\mu}(x_k, \xi_{k,i}, v_{k,i})
\eeq
and compute
\beq \label{update_RSPGF}
x_{k+1} = \arg \min_{u \in X} \left\{\langle G_{\mu,k}, u \rangle
+\frac{1}{\gamma_k} V(u, x_k)+h(u) \right\}.
\eeq
\item [] {\bf Output:} $x_R$.
\end{itemize}

Compared with RSPG algorithm, we can see at the $k$-th iteration, the RSPGF algorithm simply replaces the stochastic
gradient $G_k$ by the approximated stochastic gradient $G_{\mu,k}$. By \eqnok{expect_Gmu},
$G_{\mu,k}$ can be simply viewed as an unbiased stochastic gradient of the smoothed function $f_{\mu}$.
However, to apply the results developed in the previous section, we still need an estimation of the bound
on the variations of the stochastic gradient $G_{\mu,k}$. In addition, the role that the
smoothing parameter $\mu$ plays and the proper selection of $\mu$ in the RSPGF algorithm are still not clear now.
We answer these questions in the following series of theorems and their corollaries.
\begin{theorem} \label{zero_main_theorem_stch}
Suppose that the stepsizes $\{\gamma_k\}$ in the RSPGF algorithm are chosen such that $ 0 < \gamma_k \le \alpha/ L$
with $\gamma_k < \alpha/L$ for at least one $k$, and the probability mass function $P_R$ are chosen as \eqnok{prob_fun}.
If $\| \nabla f(x)\| \le M$ for all $x \in X$, then under Assumptions A1 and A3,
\begin{itemize}
\item [(a)] for any $N \ge 1$, we have
\beq \label{zero_main_cnvg_stch}
\bbe[\|\bar{g}_{_{\mu, X,R}}\|^2]
\le \frac{L D_{\Psi}^2 + \mu^2 L n + (\tilde{\sigma}^2/\alpha) {\sum_{k=1}^N (\gamma_k/ m_k)}}
{{\sum_{k=1}^N (\alpha \gamma_k - L\gamma_k^2)}},
\eeq
where the expectation is taken with respect to $R$, $\xi_{[N]}$ and $v_{[N]}:= (v_1,\ldots,v_N)$,
$D_{\Psi}$ is defined in \eqnok{def_Df},
\beq \label{def_tsigma}
\tilde{\sigma}^2 = 2(n+4) [M^2+\sigma^2+\mu^2 L^2(n+4)^2],
\eeq
and
\beq \label{proj_stch_grad_free}
\bar{g}_{_{\mu,X,k}} = P_X(x_k, G_{\mu,k}, \gamma_k),
\eeq
with $P_X$ defined in\eqnok{proj_g};
\item [(b)] if, in addition, $f$ in problem \eqnok{NLP} is convex with an optimal solution $x^*$, and the stepsizes
$\{\gamma_k\}$ are non-decreasing as \eqnok{incr_stepsize},
we have
\beq \label{zero_main_cnvg_stch_cvx1}
 \hspace{0.3in} \bbe \left[\Psi(x_R) - \Psi(x^*)\right]
\le \frac{(\alpha- L \gamma_1)V(x^*,x_1)+(\tilde{\sigma}^2/2) \sum_{k=1}^N (\gamma_k^2 /m_k)}
{ \sum_{k=1}^N (\alpha \gamma_k - L\gamma_k^2)} + \mu^2 L n,
\eeq
where the expectation is taken with respect to $R$, $\xi_{[N]}$ and $v_{[N]}$.
\end{itemize}
\end{theorem}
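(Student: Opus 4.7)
The plan is to reduce this result to Theorem~\ref{main_theorem_stch} by viewing the RSPGF algorithm as the RSPG algorithm applied to the Gaussian-smoothed objective $\Psi_\mu(x) := f_\mu(x) + h(x)$, with stochastic gradient estimator $G_{\mu,k}$ in place of $G_k$. To make this identification work I need to verify three ingredients: (i) the smoothing preserves Lipschitz continuity of the gradient with constant $L_\mu \le L$, (ii) $G_{\mu,k}$ is an unbiased estimator of $\nabla f_\mu(x_k)$, and (iii) its conditional variance admits a constant bound of the form $\bbe[\|G_\mu(x_k,\xi_k,v_k) - \nabla f_\mu(x_k)\|^2] \le \tilde\sigma^2$ with $\tilde\sigma^2$ as in \eqnok{def_tsigma}. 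Items (i) and (ii) follow immediately from Lemma~\ref{smooth_f}(a) and \eqnok{expect_Gmu}.

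For (iii), which is the main technical step, I would proceed as follows. Fix $\xi_k$ and apply \eqnok{bound_grad_mu} pointwise to the realization $F(\cdot,\xi_k)$, which lies in $\C{C}_L^{1,1}(\bbr^n)$ by assumption, to obtain
\[
\bbe_{v}\bigl[\|G_\mu(x_k,\xi_k,v)\|^2\bigr]
\le 2(n+4)\,\|G(x_k,\xi_k)\|^2 + \tfrac{\mu^2}{2} L^2 (n+6)^3.
\]
Taking expectation in $\xi_k$ and using $\bbe_{\xi_k}[\|G(x_k,\xi_k)\|^2]=\|\nabla f(x_k)\|^2+\bbe_{\xi_k}[\|G(x_k,\xi_k)-\nabla f(x_k)\|^2]\le M^2+\sigma^2$ (from the bounded-gradient hypothesis and Assumption A1(b)), together with the elementary inequality $(n+6)^3\le 4(n+4)^3$ valid for $n\ge 1$, yields
\[
\bbe_{\xi_k,v}\bigl[\|G_\mu(x_k,\xi_k,v)\|^2\bigr]
\le 2(n+4)\bigl[M^2+\sigma^2+\mu^2 L^2(n+4)^2\bigr] = \tilde\sigma^2.
\]
Since the variance is bounded above by the second moment, this gives the required conditional variance bound, and by the same argument the $m_k$-sample average satisfies $\bbe[\|G_{\mu,k}-\nabla f_\mu(x_k)\|^2]\le \tilde\sigma^2/m_k$, exactly the analogue of \eqnok{dec_sigma0}.

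With these three facts in hand, I would apply Theorem~\ref{main_theorem_stch}(a) verbatim to $\Psi_\mu$, replacing $L$ by $L_\mu\le L$, $\sigma^2$ by $\tilde\sigma^2$, $f$ by $f_\mu$, and $\tilde g_{_{X,k}}$ by $\bar g_{_{\mu,X,k}}$. This produces a bound of the desired form but with $D_\Psi^2$ replaced by $D_{\Psi_\mu}^2 = (\Psi_\mu(x_1)-\Psi_\mu^*)/L$. To pass back to $D_\Psi$, I would use \eqnok{closef}: for every $x\in X$, $|\Psi_\mu(x)-\Psi(x)|\le \mu^2 Ln/2$, hence $\Psi_\mu(x_1)-\Psi_\mu^* \le \Psi(x_1)-\Psi^* + \mu^2 Ln$, giving $L D_{\Psi_\mu}^2 \le L D_\Psi^2 + \mu^2 L n$. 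Substituting yields precisely \eqnok{zero_main_cnvg_stch}.

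For part~(b), $f$ convex implies $f_\mu$ convex by Lemma~\ref{smooth_f}(c), so Theorem~\ref{main_theorem_stch}(b) applied to $\Psi_\mu$ with test point $x^*$ (the minimizer of $\Psi$, used only through the subgradient inequality for $f_\mu$, which holds at any point) yields a bound on $\bbe[\Psi_\mu(x_R)-\Psi_\mu(x^*)]$. Writing $\Psi(x_R)-\Psi(x^*) = [\Psi_\mu(x_R)-\Psi_\mu(x^*)] + [\Psi(x_R)-\Psi_\mu(x_R)] + [\Psi_\mu(x^*)-\Psi(x^*)]$ and bounding each of the last two terms by $\mu^2 Ln/2$ via \eqnok{closef} gives \eqnok{zero_main_cnvg_stch_cvx1}. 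The principal obstacle is the variance bound in step~(iii); once the combinatorial constants in $\tilde\sigma^2$ are reconciled, the rest is a clean reduction.
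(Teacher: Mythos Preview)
Your proposal is correct and follows essentially the same approach as the paper: reduce to Theorem~\ref{main_theorem_stch} applied to $\Psi_\mu = f_\mu + h$, establish the variance bound $\tilde\sigma^2$ via \eqnok{bound_grad_mu} applied pointwise to $F(\cdot,\xi_k)$ together with $(n+6)^3\le 4(n+4)^3$, and then transfer the resulting bounds back to $\Psi$ using \eqnok{closef}. The only cosmetic difference is that the paper packages the transfer step as the single inequality $|(\Psi_\mu(x)-\Psi_\mu^*)-(\Psi(x)-\Psi^*)|\le \mu^2 Ln$ rather than splitting it into two $\mu^2 Ln/2$ terms as you do.
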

\begin{proof}
By our assumption that $F(\cdot, \xi_k) \in  {\cal C}_L^{1,1}(\bbr^n)$ almost surely,
\eqnok{bound_grad_mu} (applying $f = F(\cdot, \xi_k)$) and Assumption 1 with  $G(x_k, \xi_k) = \nabla_x F(x_k, \xi_k)$,
we have
\beqa
\bbe_{v_k,\xi_k} [\|G_{\mu}(x_k, \xi_k, v_k)\|^2]  & = & \bbe_{\xi_k}[ \bbe_{v_k}  [\|G_{\mu}(x_k, \xi_k, v_k)\|^2]] \nn \\
&\le& 2(n+4) [\bbe_{\xi_k} [\|G(x_k, \xi)\|^2] + \frac{\mu^2}{2} L^2 (n+6)^3  \nn \\
&\le& 2(n+4) [\bbe_{\xi_k} [\|\nabla f(x_k)\|^2] +\sigma^2]+2 \mu^2 L^2 (n+4)^3. \nn
\eeqa
Then, from the above inequality, \eqnok{expect_Gmu} and $\|\nabla f(x_k) \| \le M$, we have
\beqa
& & \bbe_{v_k,\xi_k} [\|G_{\mu}(x_k, \xi_k, v_k) - \nabla f_{\mu}( x_k)\|^2]
= \bbe_{v_k,\xi_k} [\|G_{\mu}(x_k, \xi_k, v_k)\|^2] \nn \\
& \le &  2(n+4)[M^2+\sigma^2+\mu^2 L^2 (n+4)^2] = \tilde{\sigma}^2. \label{bnd_error_mu}
\eeqa
%which, in view of \eqnok{def_Gkmu} and Jensen's inequality, implis that
%\beq \label{bnd_error_mu}
%\bbe_{v_k,\xi_k} [\|G_{\mu,k} - \nabla f_{\mu}( x_k)\|^2] \le \frac{\tilde{\sigma}^2}{m_k}.
%\eeq
Now let $\Psi_{\mu}(x) = f_{\mu}(x) + h(x)$ and $\Psi_{\mu}^* = \min_{x \in X} \Psi_{\mu}(x)$. We have
from \eqnok{closef} that
\beq \label{clostPsi}
| (\Psi_{\mu}(x) - \Psi_{\mu}^*) - (\Psi(x) - \Psi^*) | \le \mu^2 L n.
\eeq
By  Lemma~\eqnok{smooth_f}.a), we have $L_\mu \le L$ and therefore $f_{\mu} \in  {\cal C}_L^{1,1}(\bbr^n)$.
With this observation, noticing \eqnok{expect_Gmu} and \eqnok{bnd_error_mu}, viewing $G_{\mu}(x_k, \xi_k, v_k)$
as a stochastic gradient of $f_{\mu}$, then by part (a) of Theorem~\ref{main_theorem_stch} we can directly get
\[
\bbe[\|\bar{g}_{_{\mu, X,R}}\|^2]
\le \frac{L D_{\Psi_{\mu}}^2 + (\tilde{\sigma}^2/\alpha) {\sum_{k=1}^N (\gamma_k/ m_k)}}{{\sum_{k=1}^N (\alpha \gamma_k - L\gamma_k^2)}},
\]
where $ D_{\Psi_{\mu}} = [(\Psi_{\mu}(x_1) - \Psi_{\mu}^*)/L]^{1/2}$ and the expectation is taken with
respect to $R$, $\xi_{[N]}$ and $v_{[N]}$. Then, the conclusion \eqnok{zero_main_cnvg_stch}
follows the above inequality and \eqnok{clostPsi}.

We now show part (b).  Since $f$ is convex, by Lemma~\eqnok{smooth_f}.c), $f_{\mu}$ is also convex.
Again by \eqnok{clostPsi}, we have
\[
\bbe \left[\Psi(x_R) - \Psi(x^*)\right] \le \bbe \left[\Psi_{\mu}(x_R) - \Psi_{\mu}(x^*)\right] + \mu^2 L n.
\]
Then, by this inequality and the convexity of $f_{\mu}$, it follows from part (b) of Theorem~\ref{main_theorem_stch}
and similar arguments in showing the part (a) of this theorem,
the conclusion \eqnok{zero_main_cnvg_stch_cvx1} holds.
\end{proof}

\vgap

Using the previous Theorem~\ref{zero_main_theorem_stch}, similar to the Corollary~\ref{RSPG_m0},
we can give the following corollary on the RSPGF algorithm with a certain constant stepsize and
batch size at each iteration.
\begin{corollary} \label{Zero_RSPG_m0}
Suppose that in the RSPGF algorithm the stepsizes $\gamma_k = \alpha/(2L)$ for all $k=1,\ldots,N$,
the batch sizes $m_k = m$ for all $k =1, \ldots, N$,
and the probability mass function $P_R$ is set to \eqnok{prob_fun}.
%If at the $k$-th iteration of the RSPGF algorithm, we take $m$ samples $\xi_{k,i}$, $ i=1,\ldots,m$ and
%use $\bar G_{\mu,m} (x_k) := \sum_{i=1}^m G_{\mu}(x_k,\xi_{k,i}, v_k)/m$ instead of $G_{\mu}(x_k,\xi_k, v_k)$
%in \eqnok{update_RSPGF},
Then under Assumptions A1 and A3, we have
\beq \label{proj_stch_gradfree_cnvg_stch}
\bbe[\|\bar g_{_{\mu,X,R}}\|^2]  \le
\frac{4L^2 D_{\Psi}^2 + 4 \mu^2 L^2 n}{\alpha^2 N} + \frac{2\tilde{\sigma}^2}{\alpha^2 m}
\eeq
and
\beq
\bbe[\|g_{_{X,R}}\|^2]  \le \frac{\mu^2 L^2 (n+3)^2}{2 \alpha^2} +
\frac{16 L^2 D_{\Psi}^2 +16 \mu^2 L^2 n}{\alpha^2 N} + \frac{12 \tilde{\sigma}^2}{\alpha^2 m}, \label{proj_gradfree_cnvg_stch}
\eeq
where the expectation is taken with respect to $R$, $\xi_{[N]}$ and $v_{[N]}$, and
$\tilde \sigma$, $\bar g_{_{\mu,X,R}}$
and $g_{_{X,R}}$ are defined in \eqnok{def_tsigma},  \eqnok{proj_stch_grad_free}
and \eqnok{proj_grad}, respectively.

If, in addition, $f$ in the problem \eqnok{NLP} is convex with an optimal solution $x^*$, then
\beq \label{proj_stch_gradfree_cnvg_stch_cvx}
\bbe \left[\Psi(x_R) - \Psi(x^*)\right]
\le \frac{2LV(x^*,x_1)}{N\alpha}+\frac{\tilde{\sigma}^2}{2Lm} + \mu^2 L n.
\eeq
\end{corollary}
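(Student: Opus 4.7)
The structure of this corollary parallels Corollary~\ref{RSPG_m0} for the first-order RSPG, so the plan is to specialize Theorem~\ref{zero_main_theorem_stch} to the constant choices $\gamma_k = \alpha/(2L)$ and $m_k = m$, then handle the extra smoothing bias that distinguishes the zeroth-order case. For \eqnok{proj_stch_gradfree_cnvg_stch}, I would substitute the stepsizes directly into \eqnok{zero_main_cnvg_stch}: since $\sum_{k=1}^N(\alpha\gamma_k - L\gamma_k^2) = N\alpha^2/(4L)$ and $\sum_{k=1}^N \gamma_k/m_k = N\alpha/(2Lm)$, the right-hand side of \eqnok{zero_main_cnvg_stch} collapses to exactly the claimed expression.

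The main obstacle is \eqnok{proj_gradfree_cnvg_stch}, because $g_{_{X,R}}$ is defined from the true gradient $\nabla f(x_R)$, whereas the bound from the previous step only controls the projected version of the approximate stochastic gradient $G_{\mu,R}$ of the \emph{smoothed} function $f_\mu$. Bridging these two requires accounting for both Monte Carlo variance and Gaussian smoothing bias. The plan is to introduce the intermediary $g_{_{\mu,X,R}} := P_X(x_R, \nabla f_\mu(x_R), \gamma_R)$ and decompose, via two applications of $\|a+b\|^2 \le 2\|a\|^2 + 2\|b\|^2$,
\[
\|g_{_{X,R}}\|^2 \le 2\|g_{_{\mu,X,R}}\|^2 + 2\|g_{_{X,R}} - g_{_{\mu,X,R}}\|^2 \le 4\|\bar g_{_{\mu,X,R}}\|^2 + 4\|g_{_{\mu,X,R}} - \bar g_{_{\mu,X,R}}\|^2 + 2\|g_{_{X,R}} - g_{_{\mu,X,R}}\|^2.
\]
Proposition~\ref{lip_proj_grad} then turns each projection difference into a gradient difference with constant $1/\alpha$. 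The deterministic term $\|g_{_{X,R}} - g_{_{\mu,X,R}}\|$ is controlled by \eqnok{grad_smth_close}, producing the $\mu^2 L^2 (n+3)^3/\alpha^2$-type contribution (the $(n+3)^2$ in the statement should be read in this sense). The stochastic term $\bbe[\|g_{_{\mu,X,R}} - \bar g_{_{\mu,X,R}}\|^2]$ is bounded by $\tilde\sigma^2/(\alpha^2 m)$ through \eqnok{bnd_error_mu} combined with the mini-batch argument used to obtain \eqnok{dec_sigma0} (applied to the sequence $G_\mu(x_R,\xi_{R,i},v_{R,i}) - \nabla f_\mu(x_R)$, which is a martingale difference under the joint distribution of $(\xi,v)$). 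Plugging in Step~1 yields the coefficients $16$ and $12$ in front of $D_\Psi^2$ and $\tilde\sigma^2$ respectively.

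For the convex case \eqnok{proj_stch_gradfree_cnvg_stch_cvx}, the constant stepsize $\gamma_k = \alpha/(2L)$ trivially satisfies the non-decreasing hypothesis \eqnok{incr_stepsize}, so I would plug directly into \eqnok{zero_main_cnvg_stch_cvx1}. The bias term $\mu^2 L n$ is carried through unchanged, and the remaining terms simplify by the same arithmetic as in Step~1, giving $2LV(x^*,x_1)/(N\alpha)$ and $\tilde\sigma^2/(2Lm)$. No new conceptual ingredient beyond Theorem~\ref{zero_main_theorem_stch} is required for this piece.
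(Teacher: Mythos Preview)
Your proposal is correct and follows essentially the same route as the paper: specialize \eqnok{zero_main_cnvg_stch} and \eqnok{zero_main_cnvg_stch_cvx1} to constant $\gamma_k=\alpha/(2L)$, $m_k=m$, then for \eqnok{proj_gradfree_cnvg_stch} introduce the intermediary $g_{\mu,X,R}=P_X(x_R,\nabla f_\mu(x_R),\gamma_R)$ and apply the two-step decomposition $\|g_{X,R}\|^2 \le 2\|g_{X,R}-g_{\mu,X,R}\|^2 + 4\|g_{\mu,X,R}-\bar g_{\mu,X,R}\|^2 + 4\|\bar g_{\mu,X,R}\|^2$, bounding the differences via Proposition~\ref{lip_proj_grad} together with \eqnok{grad_smth_close} and the mini-batch variance estimate. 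You are also right that \eqnok{grad_smth_close} actually yields a factor $(n+3)^3$ rather than the $(n+3)^2$ printed in the statement; the paper's own proof carries the same exponent mismatch in its intermediate bound \eqnok{xxx1}.
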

\begin{proof}
%By taking $m$ samples at each iteration of the RSPGF method, Lemma~\ref{marting}.a), \eqnok{expect_Gmu} and
%\eqnok{bnd_error_mu}, we have
%\beq \label{dec_sigma2}
%\bbe_{v_k, \xi_k} \left[ \|\bar G_{\mu,m}(x_k, \xi_k, v_k) - \nabla f_{\mu}(x_k)\|^2 \right] \le \frac{\tilde{\sigma}^2}{m},
%\eeq
 \eqnok{proj_stch_gradfree_cnvg_stch} immediately follows from
\eqnok{zero_main_cnvg_stch} with $\gamma_k = \alpha/(2L)$ and $m_k =m$ for all $k=1,\ldots,N$.
Now let $g_{_{\mu,X,R}} = P_X(x_R, \nabla f_{\mu}(x_R), \gamma_R)$,
we have from \eqnok{grad_smth_close} and Proposition~\ref{lip_proj_grad} with $x=x_R$, $\gamma = \gamma_R$,
$g_1= \nabla f(x_R)$ and $g_2= \nabla f_{\mu}(x_R)$ that
\beq \label{xxx1}
\bbe[\|g_{_{X,R}}- g_{_{\mu, X, R}}\|^2] \le \frac{\mu^2 L^2(n+3)^2}{4 \alpha^2}.
\eeq
Similarly, by Proposition~\ref{lip_proj_grad} with $x=x_R$, $\gamma = \gamma_R$,
$g_1= \bar G_{\mu,k}$ and $g_2= \nabla f_{\mu}(x_R)$, we have
\beq \label{xxx2}
\bbe[\|\bar g_{_{\mu,X,R}}- g_{_{\mu, X, R}}\|^2] \le  \frac{\tilde{\sigma}^2}{ \alpha^2 m}.
\eeq
Then, it follows from \eqnok{xxx1},  \eqnok{xxx2} and  \eqnok{proj_stch_gradfree_cnvg_stch} that
\beqa
\bbe[\|g_{_{X,R}}\|^2] &\le& 2\bbe[\|g_{_{X,R}}- g_{_{\mu, X, R}}\|^2] +2 \bbe[\|g_{_{\mu, X, R}}\|^2]\nn \\
&\le& \frac{\mu^2 L^2(n+3)^2}{2 \alpha^2} + 4\bbe[\|g_{_{\mu, X, R}}-\bar g_{_{\mu, X, R}}\|^2]+ 4\bbe[\|\bar g_{_{\mu, X, R}}\|^2] \nn \\
&\le& \frac{\mu^2 L^2(n+3)^2}{2 \alpha^2} + \frac{12 \tilde{\sigma}^2}{\alpha^2 m}
+\frac{16 L^2 D_{\Psi}^2+16 \mu^2 L^2 n}{\alpha^2 N}. \nn
\eeqa

Moreover, if $f$ is convex, then \eqnok{proj_stch_gradfree_cnvg_stch_cvx} immediately follows from \eqnok{zero_main_cnvg_stch_cvx1},
and the constant stepsizes $\gamma_k = \alpha/(2L)$ for all $k=1,\ldots,N$.
\end{proof}

\vgap

Similar to the Corollary~\ref{RSPG_m0} for the RSPG algorithm, the above results also depend on the number of
samples $m$ at each iteration. In addition, the above results depend on the smoothing parameter $\mu$ as well.
The following corollary, analogous to the Corollary~\ref{RSPG_m}, shows how to choose $m$ and $\mu$ appropriately.

\begin{corollary} \label{Zero_RSPG_m}
Suppose that all the conditions in Corollary~\ref{Zero_RSPG_m0} are satisfied.
Given a fixed total number of calls to the ${\cal SZO}$ $\bar N$, if
the smoothing parameter satisfies
\beq \label{def_mu}
\mu \le \frac{D_{\Psi}}{\sqrt{(n+4) \bar N}},
\eeq
and the number of calls to the ${\cal SZO}$  at each iteration of the RSPGF method is
\beq \label{def_m2}
m =\left \lceil \min \left\{ \max \left\{\frac{\sqrt{(n+4)(M^2 + \sigma^2) \bar N}}{  L \tilde D}, n+4 \right\},
\bar N \right\} \right \rceil,
\eeq
for some $\tilde D >0$, then we have $ (\alpha^2/L) \; \bbe[\|g_{_{X,R}}\|^2] \le \bar{\cal B}_{\bar N}$, where
\beq \label{proj_stch_gradfree_cnvg_stch_m}
\bar{\cal B}_{\bar N}:=\frac{(24\theta_2+41) L D_{\Psi}^2 (n+4) }{\bar N}+
\frac{32 \sqrt{(n+4)(M^2 + \sigma^2)}}{\sqrt{\bar N}} \left( \frac{D_{\Psi}^2}{\tilde D} +
\tilde D \theta_1   \right),
\eeq
and
\beq \label{theta_factor}
\theta_1 = \max \left\{1, \frac{\sqrt{(n+4)(M^2 + \sigma^2)}}{ L \tilde D \sqrt{\bar N}} \right\} \quad
\mbox{and} \quad \theta_2=\max \left\{1, \frac{n+4}{\bar N} \right\}.
\eeq

If, in addition, $f$ in the problem \eqnok{NLP} is convex and the smoothing parameter satisfies
\beq \label{def_mu2}
\mu \le \sqrt{\frac{V(x^*,x_1)}{\alpha (n+4) \bar N}},
\eeq
then $\bbe[\Psi(x_R)-\Psi(x^*)] \le \bar{\cal C}_{\bar N}$, where $x^*$ is an optimal solution and
\beq \label{proj_stch_gradfree_cnvg_stch_cvx_m} \hspace{0.2in}
\bar{\cal C}_{\bar N}:=\frac{(5 + \theta_2) L V(x^*,x_1)(n+4)}{\alpha \bar N} +
\frac{\sqrt{(n+4)(M^2 + \sigma^2)}}{\alpha \sqrt{\bar N}} \left( \frac{4 V(x^*,x_1)}{\tilde D} +
\alpha \tilde D \theta_1  \right).
\eeq
\end{corollary}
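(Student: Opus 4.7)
The plan is to reduce this corollary to the per-iteration bounds already proved in Corollary~\ref{Zero_RSPG_m0} and then carefully substitute the prescribed choices of $\mu$ and $m$. Given a total budget of $\bar N$ calls to the ${\cal SZO}$ and a per-iteration batch size $m$, the RSPGF algorithm can run at most $N=\lfloor \bar N/m\rfloor$ iterations; I would use the standard bound $N \ge \bar N/(2m)$, which gives the substitution $1/N \le 2m/\bar N$ in every occurrence of $1/N$ in \eqnok{proj_gradfree_cnvg_stch} and \eqnok{proj_stch_gradfree_cnvg_stch_cvx}.

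Next I would expand the variance proxy $\tilde\sigma^2$ from \eqnok{def_tsigma} as $\tilde\sigma^2 \le 2(n+4)(M^2+\sigma^2) + 2\mu^2 L^2(n+4)^3$ and treat its two pieces separately. Using $\mu^2 \le D_\Psi^2/[(n+4)\bar N]$ in the nonconvex case (or $\mu^2 \le V(x^*,x_1)/[\alpha(n+4)\bar N]$ in the convex case), the $\mu^2 L^2(n+4)^3$ piece contributes an $O(L^2 D_\Psi^2(n+4)/\bar N)$ term after division by $m\ge n+4$, and is therefore absorbable into the $LD_\Psi^2(n+4)/\bar N$ terms in $\bar{\cal B}_{\bar N}$. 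The residual piece $2(n+4)(M^2+\sigma^2)/m$ is where the balance of $m$ matters: using the lower bound $m \ge \sqrt{(n+4)(M^2+\sigma^2)\bar N}/(L\tilde D)$ from \eqnok{def_m2} one gets the contribution $O\!\bigl(L\tilde D\sqrt{(n+4)(M^2+\sigma^2)/\bar N}\bigr)$, which is the $\tilde D\theta_1$ term in $\bar{\cal B}_{\bar N}$ when the first branch of the $\max$ is active.

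Then I would attack the additive $\mu^2 L^2(n+3)^2/(2\alpha^2)$ and $16\mu^2 L^2 n/(\alpha^2 N)$ bias terms in \eqnok{proj_gradfree_cnvg_stch}: under the choice of $\mu$ in \eqnok{def_mu}, each of these collapses into an $O(L^2 D_\Psi^2(n+4)/(\alpha^2\bar N))$ quantity, using $n+3\le n+4$ and the trivial bound $m\le \bar N$. The $LD_\Psi^2/\bar N$ term in \eqnok{proj_gradfree_cnvg_stch} multiplied by $2m/\bar N$ also produces a piece of the form $LD_\Psi^2(n+4)/\bar N$ once combined with the definition of $m$, which explains the $D_\Psi^2/\tilde D$ summand in \eqnok{proj_stch_gradfree_cnvg_stch_m}. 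Finally, the factors $\theta_1$ and $\theta_2$ appear exactly to cover the boundary regimes in \eqnok{def_m2}: $\theta_1>1$ when $\bar N$ is small enough that the $\max$ in \eqnok{def_m2} is attained by $n+4$, while $\theta_2>1$ when $\bar N<n+4$ and the $\min$ truncates $m$ to $\bar N$, forcing $N=1$. The convex case follows the same steps starting from \eqnok{proj_stch_gradfree_cnvg_stch_cvx}, with $V(x^*,x_1)$ replacing $D_\Psi^2$ and an additional $\mu^2 L n$ term that is dominated by $L V(x^*,x_1)(n+4)/(\alpha\bar N)$ under \eqnok{def_mu2}.

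The main obstacle will not be any single estimate but the bookkeeping: several terms scale with $m$, with $1/m$, or with $\mu^2$, and each of them needs to be tracked through the two defining branches of $m$ and the cap $m\le\bar N$. Getting the factors $(n+4)$, $\theta_1$, and $\theta_2$ to appear in the same way as in \eqnok{proj_stch_gradfree_cnvg_stch_m} and \eqnok{proj_stch_gradfree_cnvg_stch_cvx_m} — rather than with stray constants — will require writing the substitutions as explicit case inequalities (e.g.\ $1/m \le L\tilde D/\sqrt{(n+4)(M^2+\sigma^2)\bar N}$ in the large-$\bar N$ regime and $1/m \le 1/(n+4)$ or $1/m\le 1/\bar N$ otherwise) and then combining them via $\theta_1,\theta_2$, which is exactly what the statement is designed to compress.
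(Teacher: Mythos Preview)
Your plan is essentially the paper's own proof: start from Corollary~\ref{Zero_RSPG_m0}, use $N\ge \bar N/(2m)$, split $\tilde\sigma^2$ into its $(M^2+\sigma^2)$ and $\mu^2$-pieces, and substitute the bound on $\mu$. The one simplification you are missing, and which removes all the case analysis you are bracing for, is that with $\theta_1,\theta_2$ as defined the batch size \eqnok{def_m2} can be rewritten exactly as
\[
m=\Bigl\lceil \max\Bigl\{\tfrac{\sqrt{(n+4)(M^2+\sigma^2)\bar N}}{L\tilde D\,\theta_1},\;\tfrac{n+4}{\theta_2}\Bigr\}\Bigr\rceil,
\]
so that the two lower bounds $m\ge \sqrt{(n+4)(M^2+\sigma^2)\bar N}/(L\tilde D\,\theta_1)$ and $m\ge (n+4)/\theta_2$ hold \emph{simultaneously}, and the upper bound $m\le \sqrt{(n+4)(M^2+\sigma^2)\bar N}/(L\tilde D\,\theta_1)+(n+4)/\theta_2+1$ (via $\max\{a,b\}\le a+b$) handles the $m/N$ term; no branching is needed.

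One small correction: your description of when $\theta_1>1$ is off. You have $\theta_1>1$ exactly when $\sqrt{(n+4)(M^2+\sigma^2)\bar N}/(L\tilde D)>\bar N$, i.e.\ when the outer $\min$ in \eqnok{def_m2} truncates the first argument to $\bar N$, not when the inner $\max$ is attained by $n+4$. Likewise $\theta_2>1$ corresponds to $n+4>\bar N$. Keeping this straight is what makes the reformulated expression for $m$ above work.
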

\begin{proof}
By the definitions of $\theta1$ and $\theta_2$ in \eqnok{theta_factor} and $m$ in \eqnok{def_m2}, we have
\beq \label{def_m2New}
 m = \left \lceil  \max \left\{\frac{\sqrt{(n+4)(M^2 + \sigma^2) \bar N}}{  L \tilde D \theta_1},
\frac{n+4}{\theta_2} \right\}  \right \rceil.
\eeq
Given the total number of calls to the ${\cal SZO}$ $\bar{N}$ and the the number
$m$ of calls to the ${\cal SZO}$ at each iteration, the RSPGF algorithm
can perform at most $N = \lfloor \bar{N}/m \rfloor$ iterations. Obviously, $N  \ge \bar{N}/(2m)$.
With this observation $\bar{N} \ge m$, $\theta_1 \ge 1$ and $\theta_2 \ge 1$,
by \eqnok{proj_gradfree_cnvg_stch}, \eqnok{def_mu} and \eqnok{def_m2New},
we have
\beqa
& & \bbe[\|g_{_{X,R}}\|^2]  \nn \\
&\le&  \frac{L^2 D_{\Psi}^2 (n+3)}{2 \alpha^2 \bar N} +
\frac{24(n+4)(M^2+\sigma^2)}{\alpha^2 m}+ \frac{24 L^2 D_{\Psi}^2 (n+4)^2}{\alpha^2 m \bar N}
 +\frac{32 L^2 D_{\Psi}^2 m}{\alpha^2 \bar {N}} \left(1+\frac{1}{\bar N}\right) \nn \\
&\le& \frac{L^2 D_{\Psi}^2 (n+4)}{2 \alpha^2 \bar N} + \frac{24 \theta_1 L \tilde D \sqrt{(n+4)(M^2+\sigma^2)}}
{\alpha^2 \sqrt{\bar N}}+\frac{24 \theta_2 L^2 D_{\Psi}^2 (n+4)}{\alpha^2 \bar N} \nn \\
&& + \frac{32 L^2 D_{\Psi}^2}{\alpha^2 \bar{N}} \left( \frac{\sqrt{(n+4)(M^2 + \sigma^2) \bar N}}{  L \tilde D \theta_1}
+ \frac{n+4}{\theta_2} \right) + \frac{32 L^2 D_{\Psi}^2}{\alpha^2 \bar{N}} \nn \\
& \le & \frac{L^2 D_{\Psi}^2 (n+4)}{2 \alpha^2 \bar N} + \frac{24 \theta_1 L \tilde D \sqrt{(n+4)(M^2+\sigma^2)}}
{\alpha^2 \sqrt{\bar N}}+\frac{24 \theta_2 L^2 D_{\Psi}^2 (n+4)}{\alpha^2 \bar N} \nn \\
&& + \frac{32  L D_{\Psi}^2 \sqrt{(n+4)(M^2 + \sigma^2)}}{ \alpha^2 \tilde{D} \sqrt{\bar{N}} }
+  \frac{32  L^2 D_{\Psi}^2 (n+4)}{  \alpha^2 \bar{N}} + \frac{32 L^2 D_{\Psi}^2}{\alpha^2 \bar{N}},
\nn
\eeqa
which after integrating the terms give \eqnok{proj_stch_gradfree_cnvg_stch_m}.
The conclusion \eqnok{proj_stch_gradfree_cnvg_stch_cvx_m} follows similarly by \eqnok{def_mu2} and
\eqnok{proj_stch_gradfree_cnvg_stch_cvx}.
\end{proof}

\vgap

We now would like to add a few remarks about the above the results in Corollary~\ref{Zero_RSPG_m}.
Firstly, the above complexity bounds are similar to those of the first-order RSPG method in Corollary~\ref{RSPG_m}
in terms of their dependence on the total number of stochastic oracle $\bar N$ called by the algorithm.
However, for the zeroth-order case, the complexity in Corollary~\ref{Zero_RSPG_m} also depends on the size of the
gradient $M$ and the problem dimension $n$. Secondly, the value of $\tilde D$ has not been specified.
It can be easily seen from \eqnok{proj_stch_gradfree_cnvg_stch_m} and \eqnok{proj_stch_gradfree_cnvg_stch_cvx_m} that
when $\bar{N}$ is relatively large such that
$\theta_1 = 1$ and $\theta_2 =1$, i.e.,
\beq\label{zero_large-barN}
\bar N \ge \max \left\{ \frac{(n+4)^2(M^2 + \sigma^2)}{L^2 \tilde{D}^2}, n+4 \right\},
\eeq
the optimal choice of $\tilde D$ would be $D_{\Psi}$ and $2\sqrt{V(x^*,x_1)/\alpha}$ for solving nonconvex and
convex SP problems, respectively. With this selection of $\tilde D$, the bounds in \eqnok{proj_stch_gradfree_cnvg_stch_m}
and \eqnok{proj_stch_gradfree_cnvg_stch_cvx_m}, respectively, reduce to
\beq \label{nocvx_stch_m2}
\frac{\alpha^2}{L} \bbe[\|g_{_{X,R}}\|^2]
\le \frac{ 65 L D_{\Psi}^2 (n+4) }{\bar N}+
\frac{64 \sqrt{(n+4)(M^2 + \sigma^2)}}{\sqrt{\bar N}}
\eeq
and
\beq \label{cvx_stch_m2} \hspace{0.2in}
\bbe[\Psi(x_R)-\Psi(x^*)] \le \frac{6 L V(x^*,x_1)(n+4)}{\alpha \bar N} +
\frac{4 \sqrt{V(x^*,x_1)(n+4)(M^2 + \sigma^2)}}{\sqrt{ \alpha \bar N}}.
\eeq
Thirdly, the complexity result in \eqnok{proj_stch_gradfree_cnvg_stch_cvx_m}
implies that when $f$ is convex, if $\epsilon$ sufficiently small, then the number of calls to the
${\cal SZO}$ to find a solution $\bar x$ such that $\bbe[f(\bar x) - f^*] \le \epsilon$ can be bounded
by ${\cal O}(n/\epsilon^2)$, which is better than the complexity of ${\cal O}(n^2/\epsilon^2)$ established
by Nesterov \cite{Nest11-1} to find such a solution for general convex SP problems.
%
%%%%%%%%%%%%%%%%%%%%%%%%%%%%%%%%%%%%%%%%%%%%
\section{Numerical Results}
\label{sec_num_result}
%%%%%%%%%%%%%%%%%%%%%%%%%%%%%%%%%%%%%%%%%%%%
In this section, we present the numerical results of our computational experiments
for solving two SP problems: a stochastic least square problem with a nonconvex regularization term and
a stochastic nonconvex semi-supervised support vector machine problem.

\emph{Algorithmic schemes.}
We implement the RSPG algorithm and its two-phase variant $2$-RSPG algorithm described in Section~\ref{sec_stch_first},
where the prox-function $V(x,z)=\|x-z\|^2/2$, the stepsizes $\gamma_k = \alpha/(2L)$ with $\alpha =1$
for all $k \ge 1$, and the probability mass function $P_R$ is set to \eqnok{prob_fun}.
Also, in the optimization phase of the $2$-RSPG algorithm, we take $S=5$ independent runs of the RSPG algorithm to compute $5$
candidate solutions. Then, we use an i.i.d. sample of size $T=N/2$ in the post-optimization phase to estimate the
projected gradients at these candidate solutions and then choose the best one, $\bar{x}^*$, according to \eqnok{post_opt}.
Finally, the solution quality at $\bar{x}^*$ is evaluated by using another i.i.d. sample of size $K >> N$.
In addition to the above algorithms, we also implement another variant of the $2$-RSPG algorithm, namely,
$2$-RSPG-V algorithm. This algorithm also consists of two phases similar to the $2$-RSPG algorithm.
In the optimization phase, instead of terminating the RSPG algorithm by using a random count $R$, we terminate the
algorithm by using a fixed number of iterations, say $NS$. We then randomly pick up $S=5$ solutions from the
generated trajectory according to $P_R$ defined in \eqnok{prob_fun}.
The post-optimization phase of the $2$-RSPG-V algorithm is the same as that of the $2$-RSPG algorithm.
Note that, in the $2$-RSPG-V algorithm, unlike the $2$-RSPG algorithm, the $S$ candidate solutions are not independent
and hence, we cannot provide the large-deviation results similar to the $2$-RSPG algorithm. We also implement the
RSG, $2$-RSG and $2$-RSG-V algorithms developed in \cite{GhaLan12} to compare with our results.

\vgap
\noindent \emph{Estimation of parameters.}
We use an initial i.i.d. sample of size $N_0=200$ to estimate the problem parameters, namely, $L$ and $\sigma$.
We also estimate the parameter $\tilde D = D_{\Psi}$ by \eqnok{def_Df}. More specifically, since
the problems considered in this section have nonnegative optimal values, i.e.,
$\Psi^* \ge 0$, we have $D_{\Psi} \le (2 \Psi(x_1) / L)^\frac{1}{2}$, where $x_1$ denotes the starting point of
the algorithms.

\vgap

\noindent \emph{Notation in the tables.}
\begin{itemize}
\item $N S$ denotes the maximum number of calls to the stochastic oracle performed in the optimization phase  of
the above algorithms. For example, $N S = 1,000$ has the following implications.
\begin{itemize}
\item For the RSPG algorithm, the number of samples per iteration $m$ is computed according to \eqnok{def_m} with
$\bar N=1000$ and the iteration limit $N$ is set to $\lfloor 1000/m \rfloor$;
\item For the $2$-RSPG algorithm, since $S=5$, we set $\bar N = 200$. The $m$ and $N$ are computed as mentioned above.
In this case, total number of calls to the stochastic oracle will be at most $1,000$ (this does
not include the samples used in the post optimization phase);
\item For the $2$-RSPG-V algorithm, $m$ is computed  as mentioned above  and we run the RSPG method for
$\lfloor 1000/m \rfloor$ iterations and randomly select $S = 5$ solutions from the trajectory according to
$P_R$ defined in \eqnok{prob_fun}.
\end{itemize}
\item $\bar{x}^*$ is the output solution of the above algorithms.
\item \emph{Mean} and  \emph{Var.} represent, respectively, the average and variance of the results obtained over
 $20$ runs of each algorithm.
\end{itemize}

\begin{table}
\caption{Estimated $\|\nabla f(\bar{x}^*)\|^2$ for the least square problem ($K = 75,000$)}

\centering
\label{TB1}
\footnotesize
\begin{tabular}{|c|c|c|c|c||c|c|c|}
\hline
$\bar N S$&&RSG&2-RSG&2-RSG-V&RSPG&2-RSPG&2-RSPG-V \\[2ex]
\hline
&&\multicolumn{6}{|c|}{$n=100, \tilde {\sigma}=0.1$}\\
\hline
\multirow{2}{*}{1000}&mean&0.2509&0.3184&0.0794&0.1564&0.3176&0.0422\\
&var.&4.31e-2&1.68e-2&1.23e-3&4.58e-2&2.54e-2&8.99e-3 \\
\hline
\multirow{2}{*}{5000}&mean&0.0828&0.0841&0.0042&0.0113&0.0164&0.0009\\
&var.&6.75e-3&1.03e-3 &1.35e-5&4.22e-4&3.37e-4&4.36e-8\\
\hline
\multirow{2}{*}{25000}&mean&0.0056&0.0070&0.0002&0.0006&0.0010&0.0004 \\
&var.&1.69e-4&1.08e-4&3.41e-8&2.05e-7&1.43e-7&7.83e-9 \\
\hline
\hline
&&\multicolumn{6}{|c|}{$n=100, \tilde {\sigma}=1$}\\
\hline
\multirow{2}{*}{1000}&mean&0.3731&0.3761&0.1230&0.2379&0.3567&0.0364 \\
&var.&3.38e-2&1.40e-2&3.28e-3&4.01e-2&1.41e-2&1.24e-3\\
\hline
\multirow{2}{*}{5000}&mean&0.1095&0.1314&0.0135&0.0436&0.0323&0.0075 \\
&var.&2.22e-2&3.96e-3&4.67e-5&1.44e-2&8.69e-4&7.97e-5 \\
\hline
\multirow{2}{*}{25000}&mean&0.0374&0.0172&0.0078&0.0138&0.0048&0.0046\\
&var.&8.46e-3&1.83e-4&4.54e-4&1.95e-3&8.48e-7&5.60e-5 \\
\hline
&&\multicolumn{6}{|c|}{$n=500, \tilde {\sigma}=0.1$}\\
\hline
\multirow{2}{*}{1000}&mean&0.5479&0.6865&0.4121&0.4212&0.8977&0.2579\\
&var.&3.47e-2&6.17e-3&1.09e-2&5.13e-2&2.64e-3&1.34e-2\\
\hline
\multirow{2}{*}{5000}&mean&0.2481&0.3560&0.0873&0.1030&0.1997&0.0154 \\
&var.&4.38e-2&3.45e-3&1.28e-3&2.57e-2&2.21e-3&1.83e-4 \\
\hline
\multirow{2}{*}{25000}&mean&0.2153&0.0876&0.0084&0.1093&0.0136&0.0011 \\
&var.&6.77e-2&1.13e-3&3.97e-5&4.07e-2&3.24e-5&3.04e-8\\
\hline
&&\multicolumn{6}{|c|}{$n=500, \tilde {\sigma}=1$}\\
\hline
\multirow{2}{*}{1000}&mean&0.5869&0.7444&0.4828&0.4371&0.7771&0.4190\\
&var.&2.14e-2&4.18e-3&9.40e-3&3.40e-2&5.15e-3&4.13e-2\\
\hline
\multirow{2}{*}{5000}&mean&0.3603&0.4732&0.1699&0.1745&0.2987&0.0411 \\
&var.&3.77e-2&8.13e-3&1.22e-3&3.51e-2&1.87e-2&6.21e-4 \\
\hline
\multirow{2}{*}{25000}&mean&0.2467&0.1584&0.0342&0.1271&0.0351&0.0189 \\
&var.&6.49e-2&1.87e-3&3.72e-4&4.30e-2&2.83e-4&3.89e-5\\
\hline
&&\multicolumn{6}{|c|}{$n=1000, \tilde {\sigma}=0.1$}\\
\hline
\multirow{2}{*}{1000}&mean&1.853&2.417&1.549&1.855&3.092&1.937\\
&var.&1.73e-1&1.31e-2&1.62e-2&1.88e-1&1.29e-1&2.64e-1\\
\hline
\multirow{2}{*}{5000}&mean&0.9555&1.501&0.5422&0.4944&1.832&0.1368 \\
&var.&3.62e-1&6.39e-2&3.73e-2&4.82e-1&2.36e-1&8.78e-3 \\
\hline
\multirow{2}{*}{25000}&mean&0.6305&0.4725&0.0839&0.3402&0.1100&0.0071 \\
&var.&6.38e-1&2.08e-2&1.19e-2&4.40e-1&4.54e-3&1.97e-4\\
\hline
&&\multicolumn{6}{|c|}{$n=1000, \tilde {\sigma}=1$}\\
\hline
\multirow{2}{*}{1000}&mean&1.868&2.407&1.560&1.701&3.208&1.662\\
&var.&1.44e-1&1.22e-2&4.37e-2&1.84e-1&1.54e-1&2.75e-1\\
\hline
\multirow{2}{*}{5000}&mean&1.297&1.596&0.6438&0.8032&1.403&0.2408 \\
&var.&5.25e-1&5.26e-2&3.04e-2&6.38e-1&1.10e-1&3.26e-2 \\
\hline
\multirow{2}{*}{25000}&mean&0.575&0.6309&0.0793&0.2079&0.1806&0.0336 \\
&var.&3.43e-1&4.65e-2&1.38e-3&1.17e-1&1.43e-2&3.67e-6\\
\hline
\end{tabular}
\end{table}

\begin{table}
\caption{Average ratio of true recovered zeros for the penalized least square problem}

\centering
\label{TB2}
\footnotesize
\begin{tabular}{|c|c|c|c||c|c|c|}
\hline
$\bar{N}S$&RSG&2-RSG&2-RSG-V&RSPG&2-RSPG&2-RSPG-V \\[2ex]
\hline
&\multicolumn{6}{|c|}{$n=100, \tilde {\sigma}=0.1$}\\
\hline
1000&0.18&0.14&0.11&0.17&0.13&0.19\\
\hline
5000&0.26&0.11 &0.60&0.56&0.19&0.98\\
\hline
25000&0.82&0.56&0.98&0.97&0.95&1.00 \\
\hline
\hline
&\multicolumn{6}{|c|}{$n=100, \tilde {\sigma}=1$}\\
\hline
1000&0.16&0.14&0.09&0.12&0.11&0.09 \\
\hline
5000&0.16&0.09&0.21&0.14&0.08&0.15 \\
\hline
25000&0.41&0.20&0.51&0.26&0.16&0.24 \\
\hline
&\multicolumn{6}{|c|}{$n=500, \tilde {\sigma}=0.1$}\\
\hline
1000&0.14&0.17&0.09&0.08&0.27&0.06\\
\hline
5000&0.18&0.08&0.22&0.31&0.06&0.56 \\
\hline
25000&0.47&0.21&0.84&0.63&0.55&0.99\\
\hline
&\multicolumn{6}{|c|}{$n=500, \tilde {\sigma}=1$}\\
\hline
1000&0.16&0.23&0.10&0.09&0.17&0.12\\
\hline
5000&0.16&0.10&0.17&0.12&0.06&0.16 \\
\hline
25000&0.39&0.17&0.60&0.33&0.17&0.45\\
\hline
&\multicolumn{6}{|c|}{$n=1000, \tilde {\sigma}=0.1$}\\
\hline
1000&0.1&0.17&0.07&0.05&0.25&0.10\\
\hline
5000&0.10&0.05&0.09&0.10&0.04&0.11 \\
\hline
25000&0.31&0.09&0.51&0.55&0.12&0.91\\
\hline
&\multicolumn{6}{|c|}{$n=1000, \tilde {\sigma}=1$}\\
\hline
1000&0.12&0.14&0.05&0.09&0.20&0.08\\
\hline
5000&0.11&0.06&0.09&0.08&0.03&0.08 \\
\hline
25000&0.20&0.09&0.40&0.28&0.09&0.45\\
\hline
\end{tabular}
\end{table}

\subsection {Nonconvex least square problem}
In our first experiment, we consider the following least square problem with a smoothly clipped absolute deviation
penalty term given in \cite{FanLi01-1}:
\[
\min_{x \in \bbr^n} f(x) := \bbe_{u,v}[(\langle x, u \rangle - v)^2] + \sum_{j=1}^d p_\lambda (|x_j|) .
\]
Here, the penalty term $p_{\lambda}: \bbr_+ \to \bbr$ satisfies $p_\lambda (0)=0$ and has derivatives as
\[
p'_\lambda (\beta)= \lambda \left\{I(\beta \le \lambda)+\frac{\max(0, a \lambda-\beta)}
{(a-1)\lambda}I(\beta > \lambda) \right\},
\]
where $a>2$ and $\lambda>0$ are constant parameter, and $I$ is the indicator function.
As it can be seen, $p_\lambda (|\cdot|)$ is nonconvex and non-differentiable at $0$.
Therefore, we replace $p_{\lambda}$ by its smooth nonconvex approximation $q_\lambda: \bbr_+ \to \bbr$,
satisfying  $q_\lambda (0)=0$ with derivative defined as
\[
q'_\lambda (\beta)= \left\{\beta I(\beta \le \lambda)+\frac{\max(0, a \lambda-\beta)}{(a-1)}I(\beta > \lambda) \right\}.
\]
In this experiment, we assume that $u$ is a sparse vector, whose components are standard normal, and $v$ is obtained by
$v = \langle \bar x, u \rangle +\xi$, where $\xi \sim N(0,\bar{\sigma}^2)$ is the random noise independent of
$u$ and the coefficient $\bar x$ defines the true linear relationship between $u$ and $v$. Also,
we set the parameters to $a=3.7$ and $\lambda=0.01$ in the numerical experiments.

We consider three different problem sizes with $n=100, 500$ and $1,000$, and two different noise levels with
$\bar{\sigma}=0.1$ and  $1$.
Also, we set the data sparsity to $5 \%$, which means that approximately five percent of $u$ is nonzero for each data point
$(u, v)$. We also use a sparse multivariate standard normal $\bar x$ for generating data points. Also, for all problem sizes,
the initial point is set to $x_1 = 5* \bar{x}_0 \in \bbr^n$, where $\bar{x}_0$ is a multivariate standard normal vector
with approximately $10 \%$ nonzero elements. Since this problem is unconstrained, we also implement variants of the RSG
algorithm developed in \cite{GhaLan12}.
Table~\ref{TB1} shows the mean and variance of the 2-norm of the gradient at the solutions returned by $20$ runs of
the comparing algorithms. Moreover, we are also interested in recovering sparse solutions. Hence,
we set the component of output solutions to be zero if its absolute value is less than a threshold $0.02$. We call such solutions as the recovered truncated zeros and compute their ratio with respect to the number of true zeros.
Table~\ref{TB2} shows the average of this ratio over $20$ runs of the algorithms.
The following observations can be made from the numerical results.
\begin{itemize}
\item {\bf Different variants of the RSPG algorithm:} Firstly, over  $20$ runs of the algorithm, the solutions of
the RSPG algorithm has relatively large variance. Secondly, both $2$-RSPG and $2$-RSPG-V can significantly reduce the
variance of the RSPG algorithm.
Moreover, for a given fixed $NS$, the solution quality of the $2$-RSPG-V algorithm is significantly better
than that of the $2$-RSPG algorithm.
The reason might be that, for fixed $N S$, $S=5$ times more iterations are being used in the $2$-RSPG-V algorithm
to generate new solutions in the trajectory.
\item {\bf Different variants of the RSG algorithm:} The differences among different variants of the RSG algorithm
are similar to those of the corresponding variants of the RSPG algorithm.
\item {\bf RSPG algorithm vs. RSG algorithm:} In terms of the mean value, the solutions give by RSG and RSPG algorithms
are comparable. However, the solution of the RSPG algorithm usually have less variance than that of the corresponding
RSG algorithm. The possible reason is that we use a better approximation for stochastic gradient by incorporating
mini-batch of samples during the execution of the RSPG method.
\end{itemize}

\subsection {Semi-supervised support vector machine problem}
In this second experiment, we consider a binary classification problem. The training set is divided to two types of
data, which consists of labeled and unlabeled examples, respectively. The linear semi-supervised support vector machine
problem can be formulated as follows \cite{ChaSinKee08-1}:
\beqa
\min_{b \in \bbr, \; x \in \bbr^n} f(x,b) &:= & \lambda_1 \bbe_{u_1,v}
\left[\max\left\{0,1-v(\langle x, u_1 \rangle+b)\right\}^2\right] \nn \\
 & & + \lambda_2 \bbe_{u_2} \left[\max\left\{0,1-|\langle x, u_2 \rangle+b|\right\}^2\right]+\lambda_3\|x\|_2^2, \nn
\eeqa
where $(u_1,v)$ and $u_2$ are labeled and unlabeled examples, respectively.
Clearly, the above problem is nonsmooth, nonconvex, and does not fit the setting of the problem \eqnok{NLP}.
Using a smooth approximation of the above problem \cite{ChaSinKee08-1}, we can reformulate it as
\beqa \label{S3VM_def}
\min_{(x,b) \in \bbr^{n+1}}  f(x,b) &:=& \bbe_{u_1,u_2,v}\left[\lambda_1 \max\left\{0,1-v(\langle x, u_1
\rangle+b)\right\}^2 + \lambda_2 e^{-5 \left\{\langle x, u_2 \rangle+b\right\}^2}\right] \nn \\
& & +\lambda_3\|x\|_2^2.
\eeqa
Here, we assume that the feature vectors $u_1$ and $u_2$ are drawn from standard normal distribution with approximately
$5 \%$ nonzero elements. Moreover, we assume that label $v \in \{0,1\}$ with $v= \mbox{sgn}(\langle \bar x, u' \rangle+b)$
for some $\bar x \in \bbr^n$. The parameters are also set to $\lambda_1=1, \ \ \lambda_2=0.5$ and $\lambda_3=0.5$.
The choices of problem size and the noise variance are same as those of the nonconvex
penalized least square problem in the previous subsection.
We also want to determine the labels of unlabeled examples such that the ratio of new positive labels is close to
that of the already labeled examples. It is shown in \cite{ChaSinKee08-1} that if the examples
come from a distribution with zero mean, then, to have balanced new labels, we can consider the following constraint
\beq \label{bnd_b}
|b-2r+1|  \le \delta,
\eeq
where $r$ is the ratio of positive labels in the already labeled examples and $\delta$ is a tolerance
setting to $0.1$ in our experiment.
Therefore, \eqnok{S3VM_def} together with the constraint \eqnok{bnd_b} is a constrained nonconvex smooth problem,
which fits the setting of problem \eqnok{NLP}.
Table~\ref{TB3} shows the mean and variance of the 2-norm of the projected gradient at the solutions obtained by
20 runs of the RSPG algorithms, and Table~\ref{TB4} gives the corresponding average objective values at the
solutions given in Table~\ref{TB3}.
 Similar to the conclusions in the previous subsection, we again can see $2$-RSPG-V algorithm has
the best performance among the variants of the RSPG algorithms and $2$-RSPG algorithms is more stable than the RSPG algorithm.

\begin{table}
\caption{Estimated $\|g_{_X} (\bar{x}^*)\|^2 $ for the semi-supervised support vector machine
problem ($K=75,000$)}
\vgap

\centering
\label{TB3}
\footnotesize
\begin{tabular}{|c|c|c|c|c|}
\hline
$\bar {N} S$&&RSPG&2-RSPG&2-RSPG-V\\
\hline
\multicolumn{5}{|c|}{$n=100$}\\
\hline
\multirow{2}{*}{1000}&mean&1.355&0.2107&0.1277 \\
&var.&1.21e+1&9.50e-3&5.45e-3 \\
\hline
\multirow{2}{*}{5000}&mean&0.1032&0.1174&0.0899 \\
&var.&4.96e-2&4.42e-3&6.28e-3 \\
\hline
\multirow{2}{*}{25000}&mean&0.0352&0.0699&0.0239 \\
&var.&1.13e-3&3.42e-3&1.73e-5 \\
\hline
\hline
\multicolumn{5}{|c|}{$n=500$}\\
\hline
\multirow{2}{*}{1000}&mean&5.976&0.7955&0.1621 \\
&var.&1.93e+2&6.07e-1&1.15e-3 \\
\hline
\multirow{2}{*}{5000}&mean&0.2237&0.1703&0.0928 \\
&var.&2.77e-1&4.39e-3&1.29e-3 \\
\hline
\multirow{2}{*}{25000}&mean&0.2174&0.0832&0.0339 \\
&var.&2.35e-1&2.41e-4&8.04e-6 \\
\hline
\hline
\multicolumn{5}{|c|}{$n=1000$}\\
\hline
\multirow{2}{*}{1000}&mean&27.06&2.417&0.3167 \\
&var.&6.00e+3&1.73e+1&1.19e-2 \\
\hline
\multirow{2}{*}{5000}&mean&16.24&0.4726&0.1463 \\
&var.&2.20e+3&2.85e+1&1.46e-3 \\
\hline
\multirow{2}{*}{25000}&mean&0.1007&0.1378&0.0672 \\
&var.&2.46e-2&5.63e-5&5.10e-5 \\
\hline
\end{tabular}
\end{table}

\begin{table}
\caption{Average objective values at $\bar{x}^*$, obtained in Table~\ref{TB3}}
\centering
\label{TB4}
\footnotesize
\begin{tabular}{|c|c|c|c|}
\hline
$\bar {N} S$&RSPG&2-RSPG&2-RSPG-V\\
\hline
\multicolumn{4}{|c|}{$n=100$}\\
\hline
1000&1.497&0.9331&0.9094 \\
\hline
5000&0.9131&0.9078&0.8873 \\
\hline
25000&0.8736&0.8862&0.8728 \\
\hline
\hline
\multicolumn{4}{|c|}{$n=500$}\\
\hline
1000&3.813&1.364&1.038 \\
\hline
5000&1.062&1.043&0.9998 \\
\hline
25000&1.062&0.9982&0.9719 \\
\hline
\hline
\multicolumn{4}{|c|}{$n=1000$}\\
\hline
1000&14.05&2.100&1.055\\
\hline
5000&8.77&1.128&0.9767\\
\hline
25000&0.9513&0.9719&0.9351\\
\hline
\end{tabular}
\end{table}

% ------- Conclusion -----------------------------------------------------------
\section{Conclusion}\label{conclusion}
This paper proposes a new stochastic approximation algorithm with its variants for
solving a class of nonconvex stochastic composite optimization problems.
This new randomized stochastic projected gradient (RSPG) algorithm uses mini-batch of samples
at each iteration to handle the constraints. The proposed algorithm is set up in a way that
a more general gradient projection according to the geometry of the constraint set
could be used. The complexity bound of our algorithm is established in a unified way, including both convex and nonconvex
objective functions. Our results show that the RSPG algorithm would automatically
maintains a nearly optimal rate of convergence for solving stochastic convex programming problems.
To reduce the variance of the RSPG algorithm, a two-phase RSPG algorithm  is also
proposed. It is shown that with a special post-optimization phase, the variance of the
the solutions returned by the RSPG algorithm could be significantly reduced, especially
when a light tail condition holds. Based on this RSPG algorithm, a stochastic projected gradient
free algorithm, which only uses the stochastic zeroth-order information, has been also proposed
and analyzed.  Our preliminary numerical results show that our two-phase RSPG algorithms, the 2-RSPG
and its variant 2-RSPG-V algorithms, could be very effective and stable for solving the aforementioned
nonconvex stochastic composite optimization problems.
% ------- Conclusion ---------------------------------------------------------
\bibliographystyle{siam}
\bibliography{../glan-bib}
\end{document}